\documentclass[reqno]{amsart}
\usepackage{amssymb,amsmath}
\usepackage{amsthm}
\usepackage{stmaryrd}
\usepackage{mathabx}
\usepackage{color,graphicx}
\usepackage{xcolor}
\usepackage{hyperref}
\usepackage{verbatim}
\usepackage{dsfont}

\usepackage{stmaryrd}

\usepackage{bbm}

\DeclareMathOperator{\dist}{\mathrm{dist}}
\DeclareMathOperator\supp{supp}

\newcommand{\vertiii}[1]{{\left\vert\kern-0.25ex\left\vert\kern-0.25ex\left\vert #1 
    \right\vert\kern-0.25ex\right\vert\kern-0.25ex\right\vert}}

\numberwithin{equation}{section}

\newtheorem{theorem}{Theorem}[section]
\newtheorem{proposition}[theorem]{Proposition}
\newtheorem{remark}[theorem]{Remark}
\newtheorem{lemma}[theorem]{Lemma}
\newtheorem{corollary}[theorem]{Corollary}

\newtheorem{lthmx}{{\bf Lemma}}

\begin{document}

\title[On Kato's smoothing effects for KdV and Benjamin type equations]{On Kato's smoothing effects for KdV and Benjamin type equations}



\author[C. Garz\'on]{Carlos Garz\'on}
\address{Department of Mathematics, Virginia Polytechnic Institute and State University,  225 Stanger Street, Blacksburg, VA 24061-1026, USA}
\email{cgarzongu@vt.edu}

\author[O. Ria\~no]{Oscar Ria\~no}
\address{Departamento de Matem\'aticas, Universidad Nacional de Colombia, Ca\-rre\-ra 30 No. 45-03, 111321, Edificio Yu Takeuchi 404-209, Bogot\'a D.C., Colombia}
\email{ogrianoc@unal.edu.co}


\subjclass[2020]{35Q35,35Q53,35B05,35B60} 

\keywords{KdV equations, Benjamin equations, Initial-value problem, Well-posedness, Smoothing effects}

\begin{abstract}
We analyze how the interaction between local and nonlocal dispersions, combined with different types of nonlinearities, influences the smoothing effects of solutions. To achieve this goal, we consider a model that generalizes the KdV and Benjamin equations and demonstrate that its solutions exhibit Kato's smoothing effect and satisfy the propagation of regularity principle. As a result, we confirm that the higher-order dispersive term determines the local gain of fractional regularity of solutions. Our results are general; they not only recover known results for the KdV and Benjamin equations, but also provide new insights for a broader family of models of physical and mathematical interest with polynomial dispersions of arbitrary order.
\end{abstract}
 
\maketitle

\section{Introduction}\label{introduction}

This paper is concerned with the initial value problem (IVP) associated to the following Benjamin-type equations
\begin{equation}\label{Benjamineq}
\left\{\begin{aligned}
&\partial_t u+\gamma\mathcal{H}\partial_x^2 u+(-1)^{N+1} \partial_x^{2N+1}u + P(D) u+ \sum_{k=1}^{M}b_k u^k\partial_x u = 0, \quad x\in \mathbb{R},\,  t\in \mathbb{R},\\
&u(x,0)=u_0(x), 
\end{aligned}\right.    
\end{equation}
where the unknown $u(x,t)$ is a real-valued function, $\mathcal{H}$ denotes the Hilbert transform, $N$, $M$ are positive integer numbers, $\gamma \in \mathbb{R}$, the constants $b_k\in \mathbb{R}$, for each $k\in\{1,\dots,M\}$ with $b_M\neq 0$, and the differential operator $P(D)$ is composed of linear combinations of odd order derivative operators, and is defined as follows
\begin{itemize}
    \item If $N=1$, then $P(D)=0$.
    \item If $N\geq 2$, then $P(D)=\sum\limits_{k=1}^{N-1} a_k \partial_x^{2k+1}=i\sum\limits_{k=1}^{N-1} (-1)^k  a_k D^{2k+1}$ with $D=-i\partial_x$, where the $a_k$'s are real numbers.
\end{itemize}
This work aims to analyze how different dispersive terms and nonlinearities affect the following phenomena: \emph{Kato's smoothing effect} and the \emph{propagation of the regularity principle}. To this end, we introduce the model in \eqref{Benjamineq}, which encompasses various well-known nonlinear dispersive equations, and provides a rigorous framework for studying different dispersive and nonlinear behaviors. See Subsection~\ref{Subexampcontr} below for concrete examples covered by \eqref{Benjamineq}. As a consequence of our result, we deduce that the higher-order dispersive term in the equation \eqref{Benjamineq} mainly governs the aforementioned smoothing effects.

Let us briefly describe Kato's work on smoothing effects in \cite{Kato1983}. Setting $\gamma=0$, $N=1$, and $M=1$ in the equation in \eqref{Benjamineq}, one recognizes the widely studied Korteweg-de Vries (KdV) equation
  \begin{equation}\label{KdVeq}
      \begin{aligned}
       \partial_t u+\partial_x^{3}u + b\, u \partial_x u=0,  
      \end{aligned}  
    \end{equation}
where $b\in \mathbb{R}$, with $b\neq 0$. The KdV equation was first proposed to model the unidirectional propagation of nonlinear dispersive long waves, and it has been applied in various mathematical and physical contexts. Focusing on smoothing effects, Kato in \cite{Kato1983} deduced that if $u\in C([0,T];H^{s}(\mathbb{R}))$ solves \eqref{KdVeq} with regularity $s>3/2$, then 
\begin{equation*}
    \partial_x J^su\in L^{2}([0,T];L^2(-R,R))
\end{equation*}
for any $R>0$. In other words, the solution to the initial value problem is locally one derivative smoother than the initial data. This is an interesting result as the dispersive character of the equation does not immediately indicate a gain of regularity. In this regard, it is worth pointing out that smoothing effects are an intrinsic property of linear dispersive equations, for which it has been proved that the higher-order dispersion dominates the gain of regularity.  We refer to the works of \cite{ConstantinSaut1988,KenigPonceVega1991I,Sjolin1987,Vega1988}. Additionally, we remark that Kato's smoothing effect has been deduced for fractional generalizations of the KdV equation (e.g., the generalized Benjamin-Ono equation), see \cite{GinibreVelo1991,Ponce1990}.  

In this work, we propose to study whether, for an arbitrary combination of dispersion parameters and nonlinearities as in \eqref{Benjamineq}, the resulting solutions of the equation satisfy a smoothing effect similar to that of Kato for the KdV.

On the other hand, Kato's work has served as a basis for analyzing other types of regularizing effects. Among these, we focus on the work of Isaza, Linares, and Ponce \cite{IsazaLinaresPonce2015}, who established what nowadays is known as the \emph{propagation of regularity principle} for solutions of the KdV equation. They proved that extra regularity in the initial data localized on the right-hand side of the real line travels to the left with infinite speed as time evolves. That is, if $u_0\in H^s(\mathbb{R})$ (for appropriated $s>0$, e.g., $s>3/4$, see \cite{IsazaLinaresPonce2015}) and it has the extra regularity condition $\partial_x^l u_0\in L^2((x_0,\infty))$, for some $l\geq 1$ and $x_0\in \mathbb{R}$, then the solution $u\in C([0,T];H^s(\mathbb{R}))$ of \eqref{KdVeq} emanating from the initial condition $u_0$ satisfies, for any $v>0$ and $\varepsilon>0$, that
\begin{equation*}
  \sup_{0\leq t \leq T}\int_{x_0+\varepsilon-vt}^{\infty} (\partial_x^l u)^2(x,t)\, dx<\infty.
\end{equation*}
We remark that the study of the propagation of regularity principle has been extended to fractional differential operators (see \cite{KenigLinaresPonceVega2018}), and it has been investigated for various dispersive equations; for a recent survey, see \cite{LinaresPonce2023}. We also invite the reader to consult \cite{FreireMendezRiano2022, IsazaLinaresPonce2015,IsazaLinaresPonceBO2016,BolingGuoquan2018, KenigLinaresPonceVega2018} and references therein. Consequently, the second main result of this paper shows that the propagation of regularity principle holds not only for integer derivatives, but also for fractional differential operators for solutions of \eqref{Benjamineq}.

Let us mention some invariants of the equations \eqref{Benjamineq}. Solutions to \eqref{Benjamineq} formally conserve \emph{the mass}
\begin{equation}\label{Mass}
 M[u(t)]:=\int_{\mathbb{R}} u^2(x,t)\, dx,   
\end{equation}
the \emph{energy or Hamiltonian} 
\begin{equation}\label{energy}
\begin{aligned}
E[u(t)]:=&\frac{1}{2}\int_{\mathbb{R}} (\partial_x^N u)^2(x,t)\, dx-\frac{\gamma}{2}\int_{\mathbb{R}} (|D|^{1/2}u)^2(x,t)\, dx-\frac{1}{2}\sum_{k=1}^{N-1} (-1)^k a_k\int_{\mathbb{R}} (\partial_x^k u)^2(x,t)\, dx\\
&-\sum_{k=1}^{M} \frac{b_k}{(k+1)(k+2)} \int_{\mathbb{R}}u^{k+2}(x,t)\, dx,      
\end{aligned}
\end{equation}
and the \emph{$L^1$-type} conservation
\begin{equation*}
  I[u(t)]:=\int_{\mathbb{R}} u(x,t)\, dx.  
\end{equation*}

We note that in some cases, if there are more than two dispersive terms in the equation in \eqref{Benjamineq} (i.e., if $N=1$, $\gamma\neq 0$, or when $N\geq 2$, either $\gamma\neq 0$ or $a_k\neq 0$ for some $k\in \{1,\dots N-1\}$), then the equation does not have scale invariant.


\subsection{Main results}

Next, we present the main results obtained in this document. But first, since our results depend on energy estimates and pseudo-differential calculus techniques, we require a local well-posedness (LWP) theory which ensures that the norm $\|\partial_x u\|_{L^q_T L^{\infty}_x}$, $q>1$ is finite for solutions of \eqref{Benjamineq}. In this direction, we present the following lemma.

\begin{lthmx}\label{LWPN12}
Let $N, M \in \mathbb{Z}^+$, and let $\gamma \in \mathbb{R}$. If $N \geq 2$, consider $ a_1, \dots, a_{N-1} \in \mathbb{R} $. In addition, let $ b_1, \dots, b_M \in \mathbb{R} $ with $ b_M \neq 0 $. Let $s>\frac{2N+1}{4}$, then the Cauchy problem \eqref{Benjamineq} is LWP in $H^s(\mathbb{R})$. More precisely, for any $u_{0}\in H^s(\mathbb{R})$ there exist a time $T>0$ and a unique solution $u(t)$ of the Cauchy problem \eqref{Benjamineq} satisfying
\begin{equation}\label{solclass1}
   u\in C([0,T]; H^s(\mathbb{R})) 
\end{equation}
and
\begin{equation}\label{solclass2}
  \partial_x J^{r}u\in L^4([0,T];L^{\infty}(\mathbb{R})),  
\end{equation}
for all $0\leq r\leq s-\frac{(5-2N)}{4}$. Moreover,
\begin{equation}\label{solclass3}
  \||D|^s\partial_x u \|_{L^{\infty}_x L^2_T}<\infty
\end{equation}
and 
\begin{equation}\label{solclass4}
 \|u\|_{L^2_xL^{\infty}_T}<\infty.   
\end{equation}
Additionally, the map data-to-solution, $u_0\rightarrow u(x,t)$ is locally continuous from $H^s(\mathbb{R})$ into the class defined by \eqref{solclass1}-\eqref{solclass4}.
\end{lthmx}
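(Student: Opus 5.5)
We will follow the Kenig--Ponce--Vega contraction scheme for higher-order KdV-type equations, with the space adapted to the dispersion order $2N+1$. Write the linear part as $\partial_t u + i\phi(D)u=0$ with real symbol
\begin{equation*}
\phi(\xi)=-\gamma\,\xi|\xi| + \xi^{2N+1}+\sum_{k=1}^{N-1}(-1)^k a_k\xi^{2k+1},
\end{equation*}
up to sign conventions, and let $U(t)=e^{-it\phi(D)}$. For $|\xi|\ge R_0$ with $R_0$ large, we have $|\phi'(\xi)|\sim|\xi|^{2N}$ and $|\phi''(\xi)|\sim|\xi|^{2N-1}$. The lower-order terms and the Hilbert term are perturbative there; the latter only adds a singularity of $\phi''$ at the origin, since $\phi'' $ contains $-2\gamma\,\mathrm{sgn}\,\xi$.

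\textbf{Step 1: linear estimates, each for a high-frequency projection $P_{\ge R_0}$.}
\begin{itemize}
\item[(i)] Kato smoothing $\||D|^{N}U(t)P_{\ge R_0}f\|_{L^\infty_xL^2_t}\lesssim\|f\|_{L^2}$. This follows from the change of variables $\eta=\phi(\xi)$ on the monotone branches of $\phi$ and Plancherel in $t$.
\item[(ii)] The Strichartz-type estimate $\||D|^{(2N-1)/4}U(t)P_{\ge R_0}f\|_{L^4_TL^\infty_x}\lesssim\|f\|_{L^2}$. This follows from the oscillatory integral bound $|\int e^{i(t\phi(\xi)+x\xi)}|\xi|^{(2N-1)/2}\chi(\xi)d\xi|\lesssim|t|^{-1/2}$ (van der Corput with $|\phi''|\sim|\xi|^{2N-1}$), combined with a $TT^*$ argument.
\item[(iii)] The maximal function estimate $\|U(t)f\|_{L^2_xL^\infty_T}\lesssim_T\|f\|_{H^{s}}$ for $s>(2N+1)/4$, as in Kenig--Ponce--Vega for general polynomial symbols.
\end{itemize}
On $\{|\xi|\le R_0\}$ every estimate is trivial on $[0,T]$ by Bernstein and Sobolev embedding, with constants depending on $T$ and $R_0$. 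The $\mathcal H\partial_x^2$ term is harmless there because only $|\xi|\lesssim R_0$ is involved.

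\textbf{Step 2: nonhomogeneous versions via the Christ--Kiselev lemma.} In particular we obtain the dual smoothing estimate
\begin{equation*}
\Big\|\partial_x|D|^{s}\int_0^tU(t-t')F\,dt'\Big\|_{L^\infty_xL^2_T}\lesssim\||D|^{s+1-2N}F\|_{L^1_xL^2_T},
\end{equation*}
which gains $2N$ derivatives. For $N\ge2$, applying it with $F=\partial_x(u^{k+1})$ we need far less than the full gain. This is why the range $r\le s-(5-2N)/4$ in \eqref{solclass2} exceeds $s$ when $N\ge 3$. Concretely, (ii) gives $\partial_xJ^{r}U(t)u_0\in L^4_TL^\infty_x$ whenever $1+r\le s+(2N-1)/4$.

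\textbf{Step 3: contraction.} We define the norm $\Lambda_T(u)$ as the sum of the four quantities in \eqref{solclass1}--\eqref{solclass4}, and solve $u=\Phi(u)$ by Duhamel on a ball. The nonlinearity $u^k\partial_xu=\partial_x(u^{k+1})/(k+1)$ is estimated with the fractional Leibniz rule (Kenig--Ponce--Vega in mixed norms), placing $|D|^s\partial_x u$ in $L^\infty_xL^2_T$ and the $k$ remaining factors $u$ in $L^2_xL^\infty_T$. This produces a factor $\|u\|_{L^2_xL^\infty_T}\|u\|_{L^\infty_{x,T}}^{k-1}$, and Sobolev embedding controls $\|u\|_{L^\infty_{x,T}}$. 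Lower-order Leibniz terms are bounded via \eqref{solclass2} and Hölder in time, which yields a positive power of $T$. Uniqueness and the continuity of the data-to-solution map then follow from the same estimates applied to differences.

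\textbf{Main obstacle.} We expect the main obstacle to be the threshold $s>(2N+1)/4$ combined with the maximal function estimate (iii). The usual proofs of (iii) use a dyadic decomposition and an $L^1$ kernel bound $|K_k(x)|\lesssim 2^{k(2N+1)/4}\ldots$, and these must be redone with the non-homogeneous, non-smooth symbol including $\xi|\xi|$. Our first attempt would be to treat the high frequencies via stationary phase uniformly in $T\le1$, and the low frequencies crudely. The smoothing bound in (i) similarly requires $\phi'\neq0$ on the high-frequency region, which is guaranteed by choosing $R_0$ depending on $\gamma$ and the $a_k$.
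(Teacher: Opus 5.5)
Your proposal is correct and is essentially the paper's argument. Both use a Kenig--Ponce--Vega contraction in the space normed by $\|J^s u\|_{L^\infty_TL^2_x}$, $\||D|^s\partial_x u\|_{L^\infty_xL^2_T}$, $\|J^{s-\frac{5-2N}{4}}\partial_x u\|_{L^4_TL^\infty_x}$ and $\|u\|_{L^2_xL^\infty_T}$. The top-order term is bounded by $T^{1/2}\|u^k\|_{L^2_xL^\infty_T}\||D|^s\partial_x u\|_{L^\infty_xL^2_T}$, and the commutator terms by $\|\partial_x u\|_{L^1_TL^\infty_x}\le T^{3/4}\|\partial_x u\|_{L^4_TL^\infty_x}$; the paper gets these from the fixed-time Kato--Ponce commutator rather than a mixed-norm Leibniz rule.

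You should drop Step~2. The paper only ever inserts $J^s(u^k\partial_x u)\in L^1_TL^2_x$ into the homogeneous estimates via Minkowski, so the $L^1_xL^2_T$ double-smoothing estimate is never needed. It also would not follow from Christ--Kiselev, since both time exponents equal $2$ and time is the inner variable; Kenig--Ponce--Vega prove it directly from the space-time Fourier representation.
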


The proof of Lemma \ref{LWPN12} relies on a contraction mapping argument applied to the integral formulation of \eqref{Benjamineq}. This approach incorporates Strichartz estimates, smoothing effects, and maximal function bounds. The overall strategy follows techniques developed in \cite{KenigPonceVega1991,Laurey1997}. We discuss the proof of Lemma \ref{LWPN12} in the Appendix \ref{LWPresults} below.

The results of Lemma \ref{LWPN12} for $N\geq 3$ do not improve the regularity provided by standard results of LWP, which do not take into account dispersive effects. These techniques typically work for regularities $s>3/2$, and the resulting solutions satisfy (via Sobolev embedding) $\|\partial_x u\|_{L^q_TL^{\infty}_x}\lesssim T^{\frac{1}{q}}\|u\|_{L^{\infty}_TH^s}<\infty$, for any $q\geq 1$. For these reasons, by applying tools such as Kato's abstract theory \cite{Kato1975}, we obtain the following result.

\begin{lthmx}\label{StandardLWP}
Let $N, M \in \mathbb{Z}^+$, and let $\gamma \in \mathbb{R}$. If $N \geq 2$, consider $ a_1, \dots, a_{N-1} \in \mathbb{R} $. In addition, let $ b_1, \dots, b_M \in \mathbb{R} $ with $ b_M \neq 0 $. Let $s>\frac{3}{2}$. Then the Cauchy problem \eqref{Benjamineq} is LWP in $H^s(\mathbb{R})$.
\end{lthmx}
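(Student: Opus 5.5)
The plan is to apply Kato's quasilinear abstract theory \cite{Kato1975}, or equivalently a parabolic regularization / energy method, to \eqref{Benjamineq} with $s>3/2$. The first step is to write the equation as $\partial_t u + A(u)u=0$ with
\begin{equation*}
A(u)=\gamma\mathcal{H}\partial_x^2+(-1)^{N+1}\partial_x^{2N+1}+P(D)+\Big(\sum_{k=1}^M b_k u^k\Big)\partial_x ,
\end{equation*}
posed in $X=L^2(\mathbb{R})$, $Y=H^s(\mathbb{R})$, with isomorphism $S=J^s$. The linear part $L=\gamma\mathcal{H}\partial_x^2+(-1)^{N+1}\partial_x^{2N+1}+P(D)$ is a Fourier multiplier with purely imaginary symbol, because $\mathcal{H}\partial_x^2$ has symbol $i\,\mathrm{sgn}(\xi)\xi^2$ and every odd derivative has imaginary symbol. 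Hence $L$ is skew-adjoint on every $H^r$, commutes with $J^s$, and generates a unitary group. All terms of lower dispersive order are therefore harmless, and the verification reduces essentially to the KdV or generalized KdV case.

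Next I check Kato's hypotheses in order. For $u$ in a ball of $Y$, the operator $A(u)$ is quasi-accretive on $L^2$. Indeed, for smooth $w$,
\begin{equation*}
\mathrm{Re}\langle A(u)w,w\rangle=-\tfrac12\int \partial_x\Big(\sum_k b_k u^k\Big) w^2\,dx,
\end{equation*}
and this is bounded by $\|\partial_x u\|_{L^\infty}(1+\|u\|_{L^\infty}^{M-1})\|w\|_{L^2}^2$. That quantity is controlled by $\|u\|_{H^s}$ precisely because $s>3/2$ gives $H^s\hookrightarrow W^{1,\infty}$. I would get maximal accretivity from the fact that $L+\lambda$ is invertible and the first-order term is a bounded perturbation relative to the generator, via Kato's standard argument for operators of the form $a(x)\partial_x$ with Lipschitz coefficient. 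The commutator condition asks that $J^sA(u)J^{-s}=A(u)+B(u)$ with $B(u)$ bounded on $L^2$. Only the nonlinear part contributes, and $B(u)=[J^s,f(u)]\partial_xJ^{-s}$ with $f(u)=\sum b_ku^k$. The Kato--Ponce commutator estimate gives
\begin{equation*}
\|[J^s,f]\partial_x J^{-s}w\|_{L^2}\lesssim \big(\|\partial_x f\|_{L^\infty}+\|J^s f\|_{L^2}\big)\|w\|_{L^2},
\end{equation*}
and this is finite for $u\in H^s$, $s>3/2$, using that $H^s$ is an algebra. The remaining conditions are that $Y\subset D(A(u))$ with $A(u)$ bounded from $Y$ to $X$ only in a weaker sense, and the Lipschitz dependence $\|(A(u)-A(v))w\|_{X}\lesssim\|u-v\|_{X}\|w\|_Y$. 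The first is handled as in Kato's KdV treatment: since $A(u)$ is of order $2N+1$, I take $Y=H^s$ with $s$ large, or use the version of the theory with $Y\subset D(A)$ replaced by the approach through the scale $H^{s}$, $s\ge 2N+1$, followed by a Bona--Smith argument. The Lipschitz condition follows from $|f(u)-f(v)|\lesssim |u-v|$ on bounded sets together with $\partial_x w\in L^\infty$.

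An alternative route, which I would actually favor for $3/2<s<2N+1$, is to run the argument directly. I would first add $-\mu\partial_x^{2N+2}$ with a suitable sign, or use the parabolic regularization $+\mu\partial_x^{4N}$, and obtain smooth solutions for $\mu>0$. The key a priori estimate comes from applying $J^s$, pairing with $J^su$, and using skew-adjointness together with the commutator bound above:
\begin{equation*}
\frac{d}{dt}\|u\|_{H^s}^2\lesssim \big(1+\|u\|_{H^s}\big)^{M}\|u\|_{H^s}^2 .
\end{equation*}
This bound is uniform in $\mu$ and gives a common existence time. From there I would pass to the limit $\mu\to0$, obtain uniqueness and the $L^2$-Lipschitz estimate by energy estimates on differences, and get persistence in $C([0,T];H^s)$ together with continuous dependence via the Bona--Smith regularization of the data.

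The main obstacle is this last step: continuity in time in $H^s$ and continuity of the flow map, which do not follow from the a priori bound alone. For these I would follow Bona--Smith. I would mollify the data at frequency scale $\epsilon^{-1}$ and compare the solutions $u^\epsilon$ and $u^{\epsilon'}$ in $H^s$, using the higher-norm growth $\|u_0^\epsilon\|_{H^{s+l}}\lesssim\epsilon^{-l}$ against the lower-norm smallness $\|u_0^\epsilon-u_0\|_{L^2}=o(\epsilon^s)$. The nonlinear commutator terms are handled exactly as in the gKdV case, since the dispersive terms cancel identically.
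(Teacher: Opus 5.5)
Your proposal is correct and takes essentially the paper's route: the paper gives no detailed proof of this lemma, saying only that it follows from Kato's abstract quasilinear theory and standard energy techniques for $s>3/2$. Your argument rests on the same points. For $s>3/2$, $H^s\hookrightarrow W^{1,\infty}$ controls $\|\partial_x u\|_{L^\infty}$; the dispersive part is skew-adjoint and commutes with $J^s$, so it drops out; and your fallback through parabolic regularization and Bona--Smith, needed because $A(u)\notin B(H^s,L^2)$ unless $s\ge 2N+1$ as you noted, is the standard way to make that citation rigorous.
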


We are now in a position to state the main results in the manuscript. Firstly, we deduce that solutions of \eqref{Benjamineq} satisfy Kato's smoothing effect. 
 
\begin{theorem} \label{SmoothingEffectThrm}
Let $N, M \in \mathbb{Z}^+$, and let $\gamma \in \mathbb{R}$. If $N \geq 2 $, consider $ a_1, \dots, a_{N-1} \in \mathbb{R} $. In addition, let $ b_1, \dots, b_M \in \mathbb{R} $ with $ b_M \neq 0 $. Consider $s>\min\left\{\frac{3}{2} , \frac{2N+1}{4}\right\}$, and let $u_0 \in H^s(\mathbb{R})$. Then the corresponding solution $u \in C([0,T];H^s(\mathbb{R}))$ of the IVP \eqref{Benjamineq} with initial data $u_0$ determined by Lemmas \ref{LWPN12} and \ref{StandardLWP} satisfies that, for each $r\in [0,s+N]$, 
\begin{equation}\label{Katosmoothingeq}
   \int_0^T \int_{-R}^{R} (A^r u)^2 \, dx \, dt  \leq c(\|u_0\|_{H^s};T;R;r) 
\end{equation}
for every positive real number $R$, where $A^r$ is any of the operators $J^r$, $|D|^r$, and $\partial_x^{\lfloor r \rfloor} |D|^{\{r\}}$. Here, $\lfloor r \rfloor$ and $\{r\}$ stand for the integral and fractional parts of the real number $r$, respectively. 
\end{theorem}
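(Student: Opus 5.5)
The plan is a weighted energy estimate in Kato's style, combined with an induction on the regularity index $r$. The weight will come from the leading dispersion $(-1)^{N+1}\partial_x^{2N+1}$.

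\textbf{The weight and the gain.} Fix a smooth, nondecreasing, bounded weight $\chi$ with $\chi'\ge 0$ and $\chi'\ge c>0$ on $(-R,R)$; for instance $\chi(x)=\tanh(x/R)$, or a cutoff that equals $1$ for large $x$ and $0$ for very negative $x$. Apply $A^{\rho}$ to the equation, with $\rho\le s$ chosen so that $A^{\rho}u\in C([0,T];L^2)$. Multiply by $A^{\rho}u\,\chi$ and integrate in $x$. Integrating by parts $N$ times in the leading term produces
\[
\frac{2N+1}{2}\int (\partial_x^N A^{\rho}u)^2\chi'\,dx
\]
plus lower-order integrals $\int(\partial_x^jA^\rho u)^2\chi^{(2N+1-2j)}\,dx$, $j<N$. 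This term carries the correct sign, and it has $N$ extra derivatives on $(-R,R)$, which is the gain $r\le s+N$.

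\textbf{Handling the remaining terms.} The lower-order integrals are controlled by interpolating between $\|A^\rho u\|_{L^2}$ and the good weighted term; alternatively, since they only involve $j<N$, they can be handled by the inductive step below. The terms coming from $P(D)$ behave the same way: they are odd-order derivatives of order at most $2N-1$, so after integrating by parts they give weighted quadratic forms in $\partial_x^{k}A^\rho u$ with $k\le N-1$. For $\gamma\mathcal H\partial_x^2$ we write $\chi\mathcal H\partial_x^2-\mathcal H\partial_x^2\chi$ as a commutator. The Calderón commutator estimates, and pseudo-differential calculus for $[\mathcal H,\chi]$, show that it costs at most about one derivative with a weight of the form $\chi'$ plus a smoothing remainder. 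Since $N\ge 1$, this is absorbed by the main gain after an $\epsilon$-Young argument; this is where the higher-order dispersion dominates. In the nonlinear terms $\int A^\rho(u^k\partial_xu)A^\rho u\,\chi$ we split
\[
A^\rho(u^k\partial_x u)=u^kA^\rho\partial_x u+[A^\rho,u^k]\partial_x u .
\]
The first piece is integrated by parts and bounded by $\|\partial_x(u^k\chi)\|_{L^\infty}\|A^\rho u\|_{L^2}^2$. The commutator is bounded by Kato--Ponce estimates, with $\|\partial_xu\|_{L^\infty}$ and $\|u\|_{L^\infty}$ entering. After integration in time, these quantities are finite by \eqref{solclass2} from Lemma \ref{LWPN12} when $s>\frac{2N+1}{4}$, and by Sobolev embedding from Lemma \ref{StandardLWP} when $s>3/2$. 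Gronwall's inequality then yields the estimate for $\rho=s$, hence the bound \eqref{Katosmoothingeq} for $r\in[0,s+N]$ with $A^r=\partial_x^N A^s$. All these formal computations are justified by regularizing the data and using the continuous dependence.

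\textbf{Main obstacle: the intermediate induction.} The hardest step is the gain at levels above $s$. The argument above only uses global $H^s$ control, so the first pass gives just $r\le s+N$ directly when the top derivatives fall on $\partial_x^NA^s u$. That covers the statement, but handling the commutator $[A^\rho,u^k]\partial_x u$ and the Hilbert commutator when $A^r$ is fractional, i.e. $|D|^{\{r\}}$, requires localized fractional Leibniz and commutator estimates. Concretely, $\chi\,|D|^{\{r\}}$ must be compared with $|D|^{\{r\}}\chi$ modulo $L^2$-bounded remainders. I would handle these with the symbolic calculus of pseudo-differential operators, splitting $u=u\phi+u(1-\phi)$ with disjoint supports; the far part gives a smoothing operator. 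This yields equivalence of the different choices $J^r$, $|D|^r$, $\partial_x^{\lfloor r\rfloor}|D|^{\{r\}}$ locally, and allows intermediate $r$ to follow by interpolation.
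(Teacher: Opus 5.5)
Your proposal is correct if you commit to the weight $\chi(x)=\tanh(x/R)$, and it takes a genuinely different, shorter route than the paper.

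\textbf{How the paper proceeds.} The paper never does a one-shot estimate at level $s$. It rewrites the equation with $-\partial_xJ^{2N}$ as the leading term and runs an $N$-step bootstrap:
\begin{itemize}
\item it applies $J^{s-N+\ell}$ and uses weights $\psi_\ell$ whose derivatives $\psi_\ell'$ are compactly supported and nested, with $\psi_{\ell-1}'\equiv 1$ on $\operatorname{supp}\psi_\ell'$;
\item at step $\ell$ the leading term yields $(N+\tfrac12)\int (J^{s+\ell}u)^2\psi_\ell'$;
\item every error term is bounded either by $\|u\|_{H^s}^2$ or by the local quantities $\|\sqrt{\psi'_{\ell-1-j}}\,J^{s+\ell-1-j}u\|_{L^2}^2$ from earlier steps (or absorbed by $\epsilon$ when $N=1$); these come from the commutator expansions of $[J^N,\psi_\ell]$, from $Q(J)$ and from $\partial_x|D|$, with far-field pieces handled by separated-support estimates;
\item Kato--Ponce is needed only at the final step $\ell=N$.
\end{itemize}

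\textbf{How your route differs.} You work once at level $s$ and absorb the lower-order integrals $\int\chi^{(2N+1-2j)}(\partial_x^jJ^su)^2$, $j<N$, and those coming from $P(D)$, by weighted interpolation. That step needs $|\chi^{(k)}|\le C_k\chi'$ and $|(\sqrt{\chi'})^{(k)}|\le C_k\sqrt{\chi'}$, which you never state. $\tanh$ satisfies both, since all its derivatives are $\mathrm{sech}^2$ times polynomials in $\tanh$.

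Your other option fails here. For a cutoff with compactly supported $\chi'$, the ratio $\chi^{(k)}/\chi'$ is unbounded near the edge of the support. Your fallback ``handled by the inductive step below'' is not backed by anything in your proposal. What that case needs is exactly the paper's induction on the number of gained derivatives over shrinking supports.

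\textbf{What each approach buys.} Your route is closer to Kato's original argument. It uses only integration by parts plus the Calder\'on commutator for $\mathcal{H}$, and it avoids the long symbolic expansions. You still need a local equivalence lemma such as Lemma~\ref{LocReg} at the end to pass from $\partial_x^NJ^su$ to $J^{r}u$, $|D|^ru$ and $\partial_x^{\lfloor r\rfloor}|D|^{\{r\}}u$ on $(-R,R)$; for odd $N$, $J^N$ is not a differential operator. You gesture at this. The paper's compactly supported weights and $J$-based calculus are the template reused for Theorem~\ref{ThrmPropagation}, where truncated weights $\chi_{\varepsilon,b}$ are unavoidable.

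\textbf{Two simplifications.} Your concern about localized fractional Leibniz and commutator estimates is unnecessary. Take $A^\rho=J^s$ in the energy estimate, where global Kato--Ponce suffices, and convert to the other operators only at the end. No Gronwall step is needed either, since every right-hand term is bounded by $\|u\|_{L^\infty_TH^s}$ and $\|\partial_xu\|_{L^1_TL^\infty_x}$.
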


Theorem \ref{SmoothingEffectThrm} says that the local gain of regularity of \eqref{Benjamineq} is completely determined by the higher-order dispersive term of the equation. Such a result is expected to hold from the work of Constantin and Saut \cite{ConstantinSaut1988} on smoothing effects for general linear dispersive equations. Consequently, we observe that the nonlinear model exhibits a smoothing effect similar to its linear counterpart. 

The strategy for proving Theorem \ref{SmoothingEffectThrm} is based on pseudo-differential calculus, commutator estimates (expansions of commutators and operators such as $|D|^s-J^s$), which allow us to deal with different fractional differential operators. For similar techniques, see \cite{KenigLinaresPonceVega2018,Mendez2024,FreireMendezRiano2022,GinibreVelo1991}. However, we emphasize that the main difference in our work lies in applying Kato's approach in \cite{Kato1983} with multiple dispersive terms, while dealing with fractional regularities. It is worth mentioning that to mitigate the influence of different dispersion, we begin by working with energy estimates with the operator $J^{s-N+1}$, so that the higher-order dispersive term acts as a regularizer at the first derivative level. This allows us to execute a similar argument with regularity $J^{s-N+2}$, thus obtaining local gain of a second derivative, and so on. 

Using our local smoothing effects in Theorem \ref{SmoothingEffectThrm}, and pseudo-differential calculus, we deduce that solutions of \eqref{Benjamineq} satisfy the propagation of regularity principle.

\begin{theorem}\label{ThrmPropagation}
Let $N, M \in \mathbb{Z}^+$, and let $\gamma \in \mathbb{R}$. If $N \geq 2$, consider $ a_1, \dots, a_{N-1} \in \mathbb{R} $. In addition, let $ b_1, \dots, b_M \in \mathbb{R} $ with $ b_M \neq 0 $. Consider $s>\min\left\{\frac{3}{2} , \frac{2N+1}{4}\right\}$, and let $u_0 \in H^s(\mathbb{R})$. If for some $x_0\in\mathbb{R}$ and for some $m \in (s,\infty)$ 
    \begin{equation} \label{hypoth}
        \|J^m u_0\|_{L^2((x_0.\infty))}^2 = \int_{x_0}^{\infty} |J^m u_0(x)|^2 \, dx < \infty,
    \end{equation} 
    then the solution $u=u(x,t)$ of the IVP \eqref{Benjamineq} provided by Lemmas \ref{LWPN12} and \ref{StandardLWP} satisfies that for any $v,\varepsilon>0$ and $r\in (s,m]
    $
    \begin{equation} \label{propagation}
        \sup_{0\leq t \leq T} \int_{x_0+\varepsilon-vt}^{\infty}  (J^r u)^2(x,t) \, dx \leq C<\infty,
    \end{equation}
    with $C = C\left(m;\|u_0\|_{H^s};\|J^r  u_0\|_{L^2((x_0,\infty))};\varepsilon;v;T\right)>0$.

    Moreover, for any $v\geq 0$, $\varepsilon>0$, and $R>0$ 
    \begin{equation} \label{smoothing}
        \int_0^T \int_{x_0+\varepsilon-vt}^{x_0+R-vt}  (J^{m+N}u)^2(x,t) \, dx \, dt \leq C<\infty
    \end{equation}
    with $C = C\left(m;\|u_0\|_{H^s};\|J^m u_0\|_{L^2((x_0,\infty))};\varepsilon;v;R;T\right)>0$.
\end{theorem}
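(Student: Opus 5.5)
We argue by induction on the regularity level, in the spirit of \cite{IsazaLinaresPonce2015,KenigLinaresPonceVega2018}, using weighted energy estimates with a cutoff that travels to the left. Fix $\varepsilon>0$, $v>0$, and let $\chi_{\epsilon,b}\in C^\infty(\mathbb{R})$ be the standard nondecreasing weight with $\chi\equiv0$ on $(-\infty,\epsilon]$, $\chi\equiv1$ on $[b,\infty)$, $\chi'\geq 0$, and $\sqrt{\chi'}$ smooth. Set $\chi(x,t)=\chi_{\epsilon,b}(x-x_0+vt)$. The base of the induction is provided by Theorem \ref{SmoothingEffectThrm} and its proof: the same energy argument with a translated weight gives local $L^2_TL^2_x$ control of $J^{s+N}u$ on strips $\{\epsilon\le x-x_0+vt\le b\}$. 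We then increase $r$ from $s$ to $m$ in steps of size at most $N$, or of size $1$ to stay safe. At each step $r$ we assume the following at the previous level $r'=r-\theta$: the estimate \eqref{propagation} for $J^{r'}u$ with weight $\chi_{\epsilon',b'}$ for every $\epsilon'<\epsilon$, and the local smoothing \eqref{smoothing} for $J^{r'+N}u$ on the corresponding strips.

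At level $r$ we apply $J^r$ to \eqref{Benjamineq}, multiply by $J^ru\,\chi$, and integrate in $x$. This yields
\begin{equation*}
\frac12\frac{d}{dt}\int (J^ru)^2\chi\,dx-\frac v2\int (J^ru)^2\chi'\,dx+c_N\int (\partial_x^N J^ru)^2\chi'\,dx+\text{(lower order)}+\text{(nonlinear)}=0,
\end{equation*}
with $c_N>0$ after integrating by parts the term $(-1)^{N+1}\partial_x^{2N+1}$. The leading dispersive contribution is $(2N+1)/2\int(\partial_x^NJ^ru)^2\chi'$ plus terms with $\chi^{(2j+1)}$ and fewer derivatives. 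Those terms, and the $P(D)$ contributions, are supported where $\chi'\neq0$ and involve at most $r+N-1$ derivatives, so the local smoothing hypothesis at the previous level bounds them after time integration. This requires $\theta\le 1$, or an interpolation argument. The term $\gamma\mathcal H\partial_x^2$ is handled by writing $\int J^r\mathcal H\partial_x^2u\,J^ru\,\chi=-\tfrac12\int J^r u\,[\mathcal H\partial_x^2,\chi]J^ru$. The commutator is a pseudo-differential operator of order $1$ whose principal part is $\chi'|D|$, so it is controlled by $\|\sqrt{\chi'}J^{r+1/2}u\|^2$ plus an $H^s$ remainder. Since $N\ge1$, this is dominated by the local smoothing at level $r'$. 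Fractional $J^r$ acting against $\chi$ is treated through pseudo-differential expansions $[J^r,\chi]=\sum_{j}c_j\chi^{(j)}\partial_x^jJ^{r-2j}+R$, with $R$ bounded on $L^2$. Nonlocal tails, where $1-\chi$ interacts with $J^r$, are controlled by disjoint-support estimates together with the $H^s$ bound.

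The nonlinear term is $\int J^r(u^k\partial_xu)J^ru\,\chi$. We write $J^r(u^k\partial_xu)=u^kJ^r\partial_xu+[J^r,u^k]\partial_xu$. The first piece gives, after integration by parts, $\int (J^ru)^2(\partial_x(u^k)\chi+u^k\chi')$. This is bounded by $\|\partial_xu\|_{L^\infty_x}\|u\|^{k-1}_{L^\infty}\int(J^ru)^2\chi$, which is admissible for Gronwall because $\partial_xu\in L^1_TL^\infty_x$ by Lemmas \ref{LWPN12}--\ref{StandardLWP}. The $u^k\chi'$ part is bounded by local smoothing. For the commutator we use a localized Kato--Ponce estimate. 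Inserting a cutoff $\tilde\chi$ with $\tilde\chi\chi=\chi$ and support on $\{x-x_0+vt\ge\epsilon/2\}$, we split $u=\tilde\chi u+(1-\tilde\chi)u$. The localized part is estimated by $\|\partial_x u\|_{L^\infty}\|\tilde\chi J^ru\|_{L^2}$ plus $\|J^r(\tilde\chi u)\|\|\partial_x u\|_{L^\infty}$, which is controlled by the induction hypothesis with the larger weight. The part with separated supports is smoothing and is bounded by $\|u\|_{H^s}$.

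Integrating in time and applying Gronwall gives \eqref{propagation} at level $r$, and, through the positive term $c_N\int(\partial_x^NJ^ru)^2\chi'$, also \eqref{smoothing} at level $r+N$, with the case $v=0$ included. The main obstacle will be the bookkeeping of the fractional commutator expansions, together with the disjoint-support estimates for $\mathcal H$, $J^r$ and the nonlinearity. We must make sure every error term either carries a $\chi'$ factor with at most $r+N-1$ derivatives or is bounded by the $H^s$ norm. The step size also has to be chosen so that it fits the gain $N$ from the previous level. For fractional $r$, the first case we would attempt is a step of exactly $1$ from $s$, followed by a final step of size in $(0,1]$ to reach $m$.
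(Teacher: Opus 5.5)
Your overall scheme matches the paper's: induction in steps of size at most one, the base case from Theorem \ref{SmoothingEffectThrm}, a left-moving weight, and Gronwall. Working directly with the local operator $\partial_x^{2N+1}$ instead of the form \eqref{Benj2} is a harmless simplification. However, the closing of the nonlinear term fails as written.

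You bound the localized Kato--Ponce pieces by $\|\partial_xu\|_{L^\infty}\big(\|\tilde\chi J^ru\|_{L^2}+\|J^r(\tilde\chi u)\|_{L^2}\big)$ and say these are ``controlled by the induction hypothesis with the larger weight''. That hypothesis only gives $\sup_t\|\tilde\chi J^{r'}u\|_{L^2}$ at the lower level $r'=r-\theta$. But $\tilde\chi J^r u$ is a level-$r$ quantity on a region strictly larger than $\supp\chi$. Obtaining it from the level-$r$ estimate with weight $\tilde\chi$ would require a still larger weight, and so on indefinitely.

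The missing idea is the paper's partition of unity $\chi_{\varepsilon,b}+\phi_{\varepsilon,b}+\psi_\varepsilon=1$. Write $\tilde\chi=\chi+(\tilde\chi-\chi)$:
\begin{itemize}
\item the $\chi$-part is the Gronwall unknown itself;
\item $\tilde\chi-\chi$ is supported in a bounded moving strip, where $J^ru$ (and $J^r((\tilde\chi-\chi)u)$, via Lemma \ref{LocReg}) is controlled only in $L^2_TL^2_x$, by the local smoothing at level $r'$, since $r\le r'+N$.
\end{itemize}
Because this control is not uniform in $t$, you must close with
\[
\|\partial_x u\|_{L^\infty_x}\|\eta J^{r}u\|_{L^2_x}\|\chi J^ru\|_{L^2_x}\le \|\partial_xu\|_{L^\infty_x}^2+\|\eta J^{r}u\|_{L^2_x}^2\,\|\chi J^ru\|_{L^2_x}^2 ,
\]
where $\eta$ is a cutoff equal to one on the strip. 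Then apply Gronwall with the $L^1_T$ coefficient $\|\eta J^ru\|_{L^2_x}^2$. This step needs $\partial_xu\in L^2_TL^\infty_x$, available from Lemma \ref{LWPN12} or Sobolev embedding, not merely the $L^1_TL^\infty_x$ you invoke.

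The second gap is the Benjamin term when $N=1$. You claim $\|\sqrt{\chi'}J^{r+1/2}u\|_{L^2}^2$ is dominated by the local smoothing at level $r'$ ``since $N\geq1$''. That smoothing reaches only $J^{r'+N}u=J^{r-\theta+N}u$, so it requires $\theta\le N-\frac12$. For $N=1$ (the Benjamin equation itself) and the step $\theta=1$ you propose, it controls only $J^ru$ locally. There are two fixes:
\begin{itemize}
\item restrict to steps $\theta\le\frac12$ when $N=1$; or
\item do as the paper does in Proposition \ref{PropagA2}. Reduce the principal part to $\int\chi'\,J^ru\,J^{r+1}u$ plus commutator terms, and use Cauchy--Schwarz and Young as in \eqref{CSomega}. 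This gives $\omega\|\sqrt{\chi'}J^{r+1}u\|_{L^2}^2+C_\omega\|\sqrt{\chi'}J^ru\|_{L^2}^2$, and the first term is absorbed into the positive dispersive term at the current level.
\end{itemize}
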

The deduction of this theorem follows from energy estimates with appropriate weighted functions in the spirit of the works of \cite{IsazaLinaresPonce2015,KenigLinaresPonceVega2018}, see also \cite{FreireMendezRiano2022,Mendez2024}.  Our arguments in the proof of Theorem \ref{ThrmPropagation} allow us to control dispersive and nonlinear terms in a general way, and again establish that the higher-order dispersion essentially governs the propagation of regularity.

On the other hand, arguing as in the proof of \cite[Corollary 2.3]{LinaresPonce2023}, our deduction of Theorem \ref{ThrmPropagation} yields a more precise version of \eqref{propagation}.

\begin{corollary}\label{corollarypropdecay}
Under the assumptions of Theorem \ref{ThrmPropagation} with $m\in (s,\infty)$, and $r\in (s,m]$, for any $\delta>0$, $t\in (0,T)$ it follows
\begin{equation} \label{propagationexpand}
        \int_{-\infty}^{\infty} \frac{1}{\langle x_{-} \rangle^{\lfloor r-s\rfloor+1+\delta}} (J^r u)^2(x,t) \, dx \leq\frac{C}{t},
    \end{equation}
where $C=C(\|u_0\|_{H^s},\|J^m u_0\|_{L^2((x_0,\infty))},x_0;\delta)$ and $x_{-}=\max\{0;-x\}$.    
\end{corollary}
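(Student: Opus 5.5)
The plan is to follow the argument of \cite[Corollary 2.3]{LinaresPonce2023}. We combine the weighted energy estimates behind Theorem \ref{ThrmPropagation} with a time weight. The spatial weight will be integrable on the left, so it may vanish like a power at $-\infty$, and it will be bounded on the right. Concretely, fix $\delta>0$, let $j=\lfloor r-s\rfloor$, and set $\theta=j+1+\delta$. Choose a smooth increasing weight $\chi_{\theta}$ with $\chi_\theta(x)=\langle x_-\rangle^{-\theta}$ for $x\le x_0-1$ and $\chi_\theta\equiv 1$ for $x\ge x_0$. It satisfies $|\chi_\theta^{(k)}|\lesssim \chi_\theta'$ and $\chi_\theta'\lesssim \langle x_-\rangle^{-\theta-1}$. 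We then study
\[
\frac{d}{dt}\Big( t\int (J^\rho u)^2\chi_\theta\,dx\Big)
\]
by induction on $\rho\in\{s+\text{(fractional steps)}\}$, increasing from $\rho=s$ up to $\rho=r$.

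\textbf{Step 1: the energy identity.} We apply $J^\rho$ to \eqref{Benjamineq}, multiply by $t\,J^\rho u\,\chi_\theta$, and integrate. The time derivative produces the extra term $\int (J^\rho u)^2\chi_\theta$ with no factor of $t$.

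The top-order dispersion $(-1)^{N+1}\partial_x^{2N+1}$ yields, after integration by parts, the positive smoothing term
\[
c_N\,t\int (\partial_x^N J^\rho u)^2\chi_\theta'\,dx,
\]
plus lower-order terms $t\int(\partial_x^k J^\rho u)^2\chi_\theta^{(2N+1-2k)}$ with $k<N$. These are absorbed by interpolation using $|\chi_\theta^{(l)}|\lesssim\chi_\theta'$. The terms coming from $P(D)$ and $\gamma\mathcal H\partial_x^2$ are of lower order and are handled in the same way. For the Hilbert transform term this requires a commutator estimate of the form $[\mathcal H,\chi_\theta]$ together with pseudo-differential expansions, exactly as in the proof of Theorem \ref{ThrmPropagation}.

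The nonlinear terms $b_k u^k\partial_x u$ are treated with the fractional Kato--Ponce commutator estimates. The worst term is $\int t\,(J^\rho u)^2\partial_x(u^k)\chi_\theta$, which is controlled by Gronwall using $\|\partial_x u\|_{L^1_TL^\infty_x}$ from Lemmas \ref{LWPN12} and \ref{StandardLWP}. The remaining terms involve at most $\rho$ derivatives weighted by $\chi_\theta'$ or $\chi_\theta$, and these are bounded at the previous induction level.

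\textbf{Step 2: closing the induction.} The non-$t$ term $\int (J^\rho u)^2\chi_\theta$ is where the induction enters. Write $\rho=\rho'+N$ with $\rho'$ one level lower. Since $\chi_\theta\lesssim \chi'_{\theta-1}$ on the left, with weight exponent decreasing by one, this term is dominated by the smoothing quantity
\[
\int_0^T\int (\partial_x^N J^{\rho-N}u)^2\chi'_{\theta-1}\,dx\,dt
\]
obtained at the previous level (plus $\|u\|_{L^\infty_TH^s}$ on the right, where $\chi_\theta=1$ and Theorem \ref{ThrmPropagation} applies). Integrating in time from $0$ to $t$, the $t$-weight kills the initial contribution, and we obtain
\[
t\int (J^\rho u)^2\chi_\theta\le C,
\]
together with the smoothing bound $\int_0^T t\int(\partial_x^NJ^\rho u)^2\chi_\theta'\le C$ needed for the next level.

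Starting from $\rho=s$ with $\theta$ slightly bigger than $0$ (the mass and energy bounds), each increase of the regularity costs one power of the weight. Reaching $r$ therefore requires $\lfloor r-s\rfloor+1$ steps, which explains the exponent $\lfloor r-s\rfloor+1+\delta$. The $\delta$ guarantees the integrability of $\chi'$ needed at the base step.

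\textbf{Main obstacle.} I expect the hardest part to be the precise bookkeeping of the fractional parts of $\rho$ and $r$. The step sizes must be chosen so that the smoothing gain of $N$ derivatives, which is only partially used, matches the increments between levels while losing exactly one weight power per unit of regularity. At the same time, the commutators of $J^\rho$ with the polynomially decaying weight $\chi_\theta$ must be handled; these are not compactly supported perturbations, unlike in Theorem \ref{ThrmPropagation}. For the latter I would first try the expansion $[J^\rho,\chi_\theta]=\sum_{l}c_l\chi_\theta^{(l)}\partial_x^lJ^{\rho-l}+R$, using the bound $|\chi_\theta^{(l)}|\lesssim\chi_\theta'$ and an $L^2$-bounded remainder.
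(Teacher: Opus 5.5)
The proposal has a genuine gap: the induction in Step 2 does not close, because of the time weight. Multiplying by $t$ removes the initial datum, but it also puts a factor $t$ on the smoothing term. So what level $\rho$ actually delivers is $\int_0^T t\int\chi_\theta'(\partial_x^NJ^\rho u)^2\,dx\,dt\le C$. The next level, however, needs the $t$-free quantity $\int_0^T\int\chi_{\theta+1}(J^{\rho'}u)^2\,dx\,dt$ for a larger $\rho'$, and near $t=0$ the former does not control the latter. To the left of $x_0$ the datum has no regularity beyond $H^s$. Theorem \ref{ThrmPropagation} only sees $x>x_0+\varepsilon-vt$, which for small $t$ contains nothing to the left of $x_0$. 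And a quantity that is only $O(1/t)$, which is what you are proving, need not be integrable at $t=0$.

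Done consistently, each further level costs one more power of $t$ (weights $t^k$), and you end with $Ct^{-k}$, not $C/t$. The only $t$-free gain to the left of $x_0$ is Kato's smoothing from Theorem \ref{SmoothingEffectThrm}. It is uniform under translations and gives $J^{s+N}u\in L^2$ locally in $(x,t)$. That feeds exactly one $t$-weighted step, so your scheme can give the $1/t$ bound at most for $r\le s+N$. Two smaller issues:
\begin{enumerate}
\item Levels of size $N$ (``$\rho=\rho'+N$'') are incompatible with your count of $\lfloor r-s\rfloor+1$ steps.
\item The bound $|\chi_\theta^{(k)}|\lesssim\chi_\theta'$ cannot hold for a weight that is constant on $[x_0,\infty)$, since by Gronwall it would force $\chi_\theta'>0$ everywhere. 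You would need the separated-support device used in the paper instead.
\end{enumerate}

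For $r>s+N$ this is not a repairable bookkeeping issue. The paper does not resolve it either: its only justification is the pointer to \cite[Corollary 2.3]{LinaresPonce2023}. In that range the single power $1/t$ in \eqref{propagationexpand} cannot hold, as a wave-packet example shows. Take $x_0=0$ and $u_0=\epsilon\lambda^{-s}\phi(x)\cos(\lambda x)$, with $\phi\in\mathcal{C}_0^{\infty}((-2,-1))$ and $\epsilon$ small. Then, uniformly in $\lambda$, $\|u_0\|_{H^s}\lesssim\epsilon$ and $\|J^m u_0\|_{L^2((0,\infty))}=O(\lambda^{-\infty})$. At $t=c\lambda^{-2N}$ the packet has moved only a distance $O(c)$, since its group velocity is about $-(2N+1)\lambda^{2N}$. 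The nonlinear correction is negligible on this time scale, because its relative size is $O(\epsilon^{k}\lambda^{1-ks-2N})$. Hence the left-hand side of \eqref{propagationexpand} is $\gtrsim\epsilon^2\lambda^{2(r-s)}\sim t^{-(r-s)/N}$, which is not $O(1/t)$ as $\lambda\to\infty$. A correct statement must either restrict to $r\le s+N$ or allow a larger negative power of $t$, such as the $t^{-k}$ your iteration naturally produces.
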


\begin{remark}

(a) The LWP results in Lemmas \ref{LWPN12} and \ref{StandardLWP} are far from the best. We do not focus on the theory of minimal regularity; instead, we seek to study smoothing effects. However, we remark that Theorems \ref{SmoothingEffectThrm} and \ref{ThrmPropagation} extend to LWP theories for regularities $H^s(\mathbb{R})$, $s>\frac{1}{2}$ for which $\|\partial_x u\|_{L^{q}_TL^{\infty}_x}<\infty$ for some $q\geq 2$. Additionally, our LWP results establish that for some nonlinearities, \eqref{Benjamineq} is globally well-posed (GWP) in the energy space $H^N(\mathbb{R})$. More precisely, using interpolation and  Gagliardo-Nirenberg inequality, together with the mass \eqref{Mass} and energy \eqref{energy} conservations, one deduces, for example:
 \begin{itemize}
\item[(i)] If $M<4N$, then the Cauchy problem \eqref{Benjamineq} is GWP in $H^N(\mathbb{R})$.
\item[(ii)] If $M=4N$ and $\|u_0\|_{L^2}$ is small enough, then the solution $u\in C([0,T);H^N(\mathbb{R}))$ of \eqref{Benjamineq} with initial data $u_0$ provided by Lemmas \ref{LWPN12} or \ref{StandardLWP} exists for arbitrary time $T>0$. 
\end{itemize}

(b) Another question somehow connected with this work is the study of propagation of regularity with localized extra polynomial or exponential weights, i.e., study how the solution flow of \eqref{Benjamineq} propagates the extra conditions on the initial data $\langle x\rangle^r u_0\in  L^2((x_0,\infty))$, or $e^{x} u_0\in  L^2((x_0,\infty))$. In the non-local case, $\gamma\neq 0$ in \eqref{Benjamineq}, such a propagation it is not expected to hold for polynomial weights of arbitrary order (much less exponential weights), we refer to \cite{IsazaLinaresPonceBO2016}. For the local case, $\gamma=0$ in \eqref{Benjamineq}, following results as in \cite{MendezRiano2025}, it might be possible to establish the propagation of polynomial and exponential weights. However, developing this analysis would require additional techniques and would significantly divert the focus of the present paper. For this reason, we will not address this issue here.
\end{remark}

\begin{remark} Replacing $\mathcal{H}\partial_x^2$ by the general operator $\partial_x |D|^{\beta}$ with $\beta\in (0,2)$, we obtain the following version of the equation \eqref{Benjamineq}
\begin{equation}\label{Dispersiveeq}
\begin{aligned}
\partial_t u+\gamma\partial_x |D|^{\beta} u+(-1)^{N+1} \partial_x^{2N+1}u + P(D) u+ \sum_{k=1}^{M}b_k u^k\partial_x u = 0, \quad x\in \mathbb{R}, \, t\in \mathbb{R}. 
\end{aligned}   
\end{equation} 
With the techniques developed in this work, Theorems \ref{SmoothingEffectThrm} and \ref{ThrmPropagation} extend to this family of equations. Notice that using standard techniques such as Kato's abstract theory, it is possible to show LWP for the Cauchy problem associated to \eqref{Dispersiveeq} in $H^{s}(\mathbb{R})$, with $s>3/2$.  Summarizing:

\begin{corollary}
Let $N, M \in \mathbb{Z}^+$, $\beta \in (0,2)$, and $\gamma\in \mathbb{R}$. If $ N \geq 2 $, consider $ a_1, \dots, a_{N-1} \in \mathbb{R} $. In addition, let $ b_1, \dots, b_M \in \mathbb{R} $ with $ b_M \neq 0 $.  For $s>\frac{3}{2}$ and initial condition $u_0\in H^s(\mathbb{R})$, the conclusions of Theorems \ref{SmoothingEffectThrm} and \ref{ThrmPropagation} also hold for solutions of the Cauchy problem associated to \eqref{Dispersiveeq}. In particular, under each corresponding assumption, the estimates \eqref{Katosmoothingeq}, \eqref{propagation}, and \eqref{smoothing} remain valid for \eqref{Dispersiveeq}.   
\end{corollary}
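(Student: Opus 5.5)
The corollary follows by rerunning the proofs of Theorems \ref{SmoothingEffectThrm} and \ref{ThrmPropagation} for \eqref{Dispersiveeq}. Those proofs use only three features of the operator $\gamma\mathcal{H}\partial_x^2$: it is a real, skew-adjoint Fourier multiplier; its symbol has order strictly below $2N+1$; and commutators of it with cutoffs and weights admit a pseudo-differential expansion. The operator $\gamma\partial_x|D|^{\beta}$ with $\beta\in(0,2)$ has all three features. Its symbol is $i\gamma\xi|\xi|^{\beta}$, which is purely imaginary and odd, so it is skew-adjoint. Its order $1+\beta<3\le 2N+1$. It is smooth away from $\xi=0$, so it is a classical symbol of order $1+\beta$ modulo a smoothing remainder coming from a low-frequency cutoff.

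\emph{Step 1: local well-posedness.} Since $s>3/2$, Kato's abstract theory \cite{Kato1975} applies exactly as in Lemma \ref{StandardLWP}. The linear part generates a unitary group on every $H^s$, because it is skew-adjoint and commutes with $J^s$. Sobolev embedding then gives $\|\partial_x u\|_{L^q_TL^\infty_x}\lesssim T^{1/q}\|u\|_{L^\infty_TH^s}$. This supplies the only nonlinear input the energy arguments need.

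\emph{Step 2: Kato smoothing.} I would follow the inductive scheme described after Theorem \ref{SmoothingEffectThrm}. Apply $J^{s-N+1+j}$ to the equation, multiply by $J^{s-N+1+j}u\,\chi$ with $\chi$ a smooth increasing cutoff, and integrate. The term $(-1)^{N+1}\partial_x^{2N+1}$ produces the positive local quantity $\int\chi'(\partial_x^N J^{s-N+1+j}u)^2$. The new dispersive term enters only through
\[\int [\partial_x|D|^{\beta},\chi]\,J^{\sigma}u\,J^{\sigma}u\,dx,\qquad \sigma=s-N+1+j.\]
By the symbolic calculus, the commutator $[\partial_x|D|^{\beta},\chi]$ expands as $(1+\beta)\chi'|D|^{\beta}+\dots$ plus terms involving $\chi'''$ and higher derivatives, up to a remainder of order $\beta-K$. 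The leading term has order $\beta<2\le 2N$. It is therefore controlled by interpolating between $\|u\|_{L^\infty_TH^s}$ and the local gain at the previous induction level, with a small multiple of the new gain absorbed into the left-hand side; this works in the same way as the $\mathcal{H}\partial_x^2$ case, where the order was $1$.

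\emph{Step 3: propagation of regularity.} The weighted energy estimates with the weights $\chi_{\varepsilon,b}(x+vt)$ of \cite{IsazaLinaresPonce2015,KenigLinaresPonceVega2018} go through unchanged. The dispersive commutator again has order at most $\beta$ relative to the energy level, and it is localized on $\operatorname{supp}\chi'$ up to smoothing tails. Those tails are bounded by $\|u\|_{H^s}$ using the standard bound for operators with disjointly supported cutoffs, for instance $\|\phi\,\partial_x|D|^{\beta}(\psi f)\|_{L^2}\lesssim\|f\|_{L^2}$ when $\dist(\supp\phi,\supp\psi)>0$.

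\emph{Main obstacle.} The hard part is the commutator expansion when $\beta$ is fractional. The symbol $\xi|\xi|^{\beta}$ is not smooth at the origin, so the expansion of $[J^{\sigma}\partial_x|D|^{\beta},\chi]$ must be carried out by splitting into low and high frequencies. On low frequencies one uses a direct kernel estimate. On high frequencies one uses the classical calculus, as in \cite{Mendez2024,GinibreVelo1991}. The remainder must be shown to be bounded on $L^2$ with constants depending only on finitely many derivatives of $\chi$, and this is the step that needs the most care.
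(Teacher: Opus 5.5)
Your proposal is correct and follows the plan the paper indicates for this corollary, which it only sketches in a remark. That plan is local well-posedness for $s>3/2$ via Kato's abstract theory, which is all that is needed for $\|\partial_x u\|_{L^q_TL^\infty_x}<\infty$. It is followed by a rerun of the energy estimates of Sections~\ref{Katosection} and~\ref{propSection}, using that $\gamma\partial_x|D|^{\beta}$ is skew-adjoint and that its commutators with the weights have order $\beta<2\le 2N$.

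\textbf{Where you diverge: the non-smooth symbol.} You plan a low/high frequency split for $\xi|\xi|^{\beta}$ and call the resulting commutator expansion the main obstacle. The paper's device for $\mathcal{H}\partial_x^2=\partial_x|D|$, Proposition~\ref{ExpDs-Js} with $s=\beta$, removes that obstacle. It gives $|D|^{\beta}=J^{\beta}-\sum_{j=1}^{L}\binom{\beta/2}{j}(-1)^{j+1}J^{\beta-2j}+\Phi$. Here each $J^{\beta-2j}$ has a smooth classical symbol, and $\Phi$ is a Fourier multiplier commuting with $\partial_x$ and $J$, with $\|\Phi f\|_{L^2}\lesssim\|f\|_{H^{\beta-2(L+1)}}$.
\begin{itemize}
\item Every commutator with $\psi_\ell$ or $\chi_{\varepsilon,b}^2$ then involves only powers of $J$.
\item The remainder enters only through terms like $\int\chi_{\varepsilon,b}^2J^mu\,\partial_x\Phi J^mu\,dx$. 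Cauchy--Schwarz bounds these by $\|\chi_{\varepsilon,b}J^mu\|_{L^2}\|u\|_{L^2}$ once $L$ is large.
\item No commutator, kernel estimate, or dependence on derivatives of the weight is needed (compare $A_2^{(\widetilde{L}+1)}$ in Proposition~\ref{PropagA2}).
\end{itemize}
Your split also works. Your low-frequency piece is likewise just an $L^2$-bounded multiplier, so it needs no kernel estimate either. The expansion simply lets you reuse the paper's lemmas verbatim, so the step you flag is routine.

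\textbf{Where care is actually needed: $N=1$ with $\beta\in(1,2)$.} Here ``the same way as the $\mathcal{H}\partial_x^2$ case'' is not literally accurate.
\begin{itemize}
\item For $\beta=1$, the paper reduces the leading term to $\int\psi_1'J^{s+1}u\,J^su\,dx$. It closes with Cauchy--Schwarz against the global factor $\|J^su\|_{L^2}$.
\item For $\beta>1$, the leading term is a multiple of $\int\psi_1'(J^{\beta}v)\,v\,dx$ with $v=J^su$. It cannot be split with one factor at level $s$, because the other factor would then sit above the available gain $s+1$.
\end{itemize}
So a genuine localized interpolation is required. For instance, choose $\psi_1'=\rho^2$ with $\rho$ smooth and use
\[
\int\rho^2(J^{\beta}v)\,v\,dx=\|J^{\beta/2}(\rho v)\|_{L^2}^2+\tfrac12\big\langle[\rho,[\rho,J^{\beta}]]v,v\big\rangle ,
\]
where the double commutator has order $\beta-2<0$. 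Combined with $\langle\xi\rangle^{\beta}\le\epsilon\langle\xi\rangle^{2}+C_\epsilon$ and $\|J(\rho v)\|_{L^2}\le\|\rho Jv\|_{L^2}+C\|v\|_{L^2}$, this absorbs into $\int\psi_1'(J^{s+1}u)^2\,dx$. This is exactly where $\beta<2$ is used. The corresponding $N=1$ term in Proposition~\ref{PropagA2} needs the same kind of argument. For $N\ge2$ no absorption is needed: local order $s+\beta/2<s+1$ is already controlled by the previous induction level through nested supports.
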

\end{remark}

\subsection{Examples and contributions}\label{Subexampcontr}

Let us exhibit some examples within the class in \eqref{Benjamineq}, where our results in Theorems \ref{SmoothingEffectThrm} and \ref{ThrmPropagation} are applicable.

$ \bullet$ Setting $\gamma=0$, and $P(D)=0$ (i.e., $a_k=0$ for all $k=1,\dots,N-1$, when $N\geq 2)$ in the equation in \eqref{Benjamineq}, one recognizes the $N$-th order Korteweg-de Vries (KdV) equation with combined nonlinearities
    \begin{equation}\label{NkdV}
      \begin{aligned}
       \partial_t u+(-1)^{N+1}\partial_x^{2N+1}u + \sum_{k=1}^{M}b_k u^k \partial_x u=0.  
      \end{aligned}  
    \end{equation}
In particular, when $N=1$, one obtains KdV \eqref{KdVeq} with combined nonlinearities. Concerning well-posedness, see \cite{KenigPonceVega1991,KillipVisan2019}. As a consequence of Theorems \ref{SmoothingEffectThrm} and \ref{ThrmPropagation}, we recover the smoothing effects and propagation of regularity principle obtained in \cite{IsazaLinaresPonce2015,KenigLinaresPonceVega2018} for solutions of KdV. 

Our result in Theorem \ref{SmoothingEffectThrm} is consistent with that in \cite{GinibreVelo1991}, although it provides an alternative proof. To the best of our knowledge, Theorem \ref{ThrmPropagation} is among the first to show the fractional propagation of regularity principle for solutions of \eqref{NkdV} with $N\geq 2$, and $M\geq 1$, $b_M\neq 0$.

$\bullet$ When $\gamma\neq 0$ and $N=1$, the equation in \eqref{Benjamineq} reduces to the Benjamin equation 
     \begin{equation}\label{BenjamineqOrig}
      \begin{aligned}
       \partial_t u+\gamma \mathcal{H}\partial_x^2 u+\partial_x^{3}u + \sum_{k=1}^{M}b_k u^k \partial_x u=0.  
      \end{aligned}  
    \end{equation}
For the case $M=1$, Benjamin made one of the first mentions of this model in \cite{Benjamin1996}, where it is applied to study waves in a two-fluid system subject to surface tension. Referring to well-posedness, see \cite{Linares1999,HuoGuo2005,ChenGuoXiao2011}. For a recent review of the Benjamin equation, see \cite{KleinLinaresPilodSaut2025} and references therein. 

Regarding smoothing effects, Guo and Qin \cite{BolingGuoquan2018} proved the propagation of regularity principle for \eqref{BenjamineqOrig} when $M=1$, and the initial data lies in $H^{l}((x_0,\infty))$, with $l\in\mathbb{Z}^{+}$ and $x_0\in\mathbb{R}$. Therefore, an extension of this result to Sobolev spaces $H^{s}((x_0,\infty))$ with non-integer $s$ and with a more general nonlinear term is provided by Theorem \ref{ThrmPropagation}.
\\ \\
Extending \eqref{BenjamineqOrig}, Chen and Bona \cite{ChenBona1998} (see also \cite{LinaresScialom2005}) introduced the model
    \begin{equation}\label{generalBenjamineqOrig}
      \begin{aligned}
       \partial_t u+\gamma\mathcal{H}\partial_x^2 u+(-1)^{N+1}\partial^{2N+1}_xu+\sum_{k=1}^{M}b_k u^k \partial_x u=0,  
      \end{aligned}  
    \end{equation}
which corresponds to the equation in \eqref{Benjamineq} with $P(D)=0$. In this paper, we obtain new fractional propagation of regularity results for \eqref{generalBenjamineqOrig}.

$ \bullet$ Setting $\gamma= 0$ and $N=2$, we get the Kawahara equation with combined nonlinearities
     \begin{equation*}
      \begin{aligned}
       \partial_t u-\partial_x^{5}u+a_1 \partial_x^3 u+ \sum_{k=1}^{M}b_k u^k \partial_x u=0.  
      \end{aligned}  
    \end{equation*}
This equation has been applied in modeling gravity-capillary waves. Regarding well-posedness for other nonlinearities, we refer to \cite{JiaHuo2009,Kato2011,KenigPilod2015,BrigmannKillipVisan2021,KlausKochBaoping2023} and references therein. Concerning smoothing effects, in \cite{GuoZhang2023,SegataSmith2017}, it is shown that a class of fifth-order KdV equations with more general nonlinear terms satisfy the propagation of regularity principle for the local derivative operators $\partial_x^{l}u_0\in L^2((x_0,\infty))$. Theorem \ref{ThrmPropagation} is different from the above results as the nonlinear terms considered are not the same and the pseudo-differential calculus allows us to prove the propagation of regularity principle for fractional regularity, i.e., in the space $H^s((x_0,\infty))$, $s>0$. Thus, our results provide new propagation of regularity conclusions in the non-integer case, and we believe that our techniques could be adapted to the model studied in \cite{GuoZhang2023,SegataSmith2017}.

$ \bullet$ Choosing  $\gamma\neq 0$, $N=2$, and $a_1\neq 0$, we obtain the fifth-order Benjamin-equation
     \begin{equation*}
      \begin{aligned}
       \partial_t u+\gamma \mathcal{H}\partial_x^2 u-\partial_x^{5}u+a_1\partial_x^3 u+ \sum_{k=1}^{M}b_k u^k \partial_x u=0.  
      \end{aligned}  
    \end{equation*}
This model appears in the study of different limits in the context of electric fields, e.g., \cite{GleesonHammertonPapaVanden2007}. For well-posedness results when $ M=1$, see \cite{CorreaMsC2023}. To the best of our knowledge, the smoothing effects for the solutions of this model have not been addressed before. Thus, Theorems \ref{SmoothingEffectThrm} and \ref{ThrmPropagation} are the first advancements in this direction. 

$\bullet$ Setting $\gamma=0$, when $N=3$ we obtain the seventh-order KdV
\begin{equation}\label{7th}
\partial_t u+\partial_x^7 u+a_2\partial_x^5 u +a_1\partial_x^3 u + \sum_{k=1}^{M}b_k u^k \partial_x u=0,   
\end{equation}
and with $N=4$, we have the ninth-order KdV
\begin{equation}\label{9th}
\partial_t u-\partial_x^9 u+a_3\partial_x^7 u+a_2\partial_x^5 u +a_1\partial_x^3 u + \sum_{k=1}^{M}b_k u^k \partial_x u=0.   
\end{equation}
These equations have appeared when studying the stability of perturbations of solitons, see \cite[Chapter 9]{Handbook2008} and \cite{Ma1993}. Additionally, the Cauchy problem associated to the family \eqref{7th} with $M=1$ has been studied in \cite{Wang2014}. {
Theorems \ref{SmoothingEffectThrm} and \ref{ThrmPropagation} seem to be the first results establishing fractional smoothing effects for solutions of \eqref{7th} and \eqref{9th}.


\subsection*{Organization of the document} 

This paper is organized as follows. In Section \ref{Secprelimin}, we present the necessary preliminaries to establish our main results, including some results from the theory of pseudo-differential operators as well as some commutator estimates. In Section \ref{Katosection}, we prove our first main result, Theorem \ref{SmoothingEffectThrm}, which concerns the deduction of Kato's smoothing effect for solutions of \eqref{Benjamineq}. In Section \ref{propSection}, we prove that solutions of \eqref{Benjamineq} satisfy the propagation of regularity principle, i.e., we establish Theorem \ref{ThrmPropagation}. Finally, in Appendix \ref{LWPresults}, we provide some key estimates in the deduction of LWP detailed in  Lemma \ref{LWPN12}.


\subsection*{Notation} 

\begin{itemize}
\item Given two nonnegative constants $a$ and $b$, $a\lesssim b$ means that there exists a constant $c>0$ such that $a\leq c b$. $a\gtrsim b$, whenever $b\lesssim a$. We say that $a\sim b$, when $a\lesssim b$ and $b\lesssim a$. To emphasize the dependence of the implicit constants, $a\lesssim_k b$ means that the implicit constant depends on $k$.

\item If $A$ and $B$ are two operators, $[A,B]=AB-BA$ stands for the commutator of $A$ and $B$. 

\item Given $x \in \mathbb{R}^d$, we use the Japanese bracket notation $\langle x \rangle = (1 + |x|^2)^{1/2}$.

\item $\mathcal{C}_0^{\infty}(\mathbb{R}^d)$ denotes the set of smooth functions defined on $\mathbb{R}^d$ with compact support. $S(\mathbb{R}^d)$ denotes the Schwartz  space of all $\mathcal{C}^{\infty}(\mathbb{R}^d)$ functions whose derivatives are rapidly decreasing. 

\item Given a function $f$, we denote its Fourier transform by $\widehat{f}$ or $\mathcal{F}f$, and we denote its inverse Fourier transform by $\widecheck{f}$ or $\mathcal{F}^{-1}f$. 

\item For each $s\in\mathbb{R}$, the operators $|D|^s = (-\Delta)^{s/2}$ and $J^s = (1 - \Delta)^{s/2}$ denote the Riesz and Bessel potentials of order $-s$, respectively. That is, given $f \in S(\mathbb{R}^d)$, these operators are defined by $|D|^s f = (|\xi|^s \widehat{f})^{\widecheck{}}$ (if $s<0$, definition is in the distributional sense) and $J^s f = (\langle \xi \rangle^s \widehat{f})^{\widecheck{}}$.

\item $H^s(\mathbb{R}^d)$ denotes the space of tempered distributions $f$ for which $J^s f\in L^2(\mathbb{R}^d)$. We denote by $H^{\infty}(\mathbb{R}^d)=\bigcap_{s>0} H^{\infty}(\mathbb{R}^d)$.

\item Given $1\leq p,q\leq \infty$, the mixed space-time norms are defined as follows
\begin{equation*}
\|f\|_{L^p_x L^q_T}=\left(\int_{\mathbb{R}} \left(\int_0^T |f(x,t)|^q dt \right)^{p/q}\, dx \right)^{1/p},      
\end{equation*}
and
\begin{equation*}
\|f\|_{L^p_T L^q_x}=\left(\int_0^T\left(\int_{\mathbb{R}} |f(x,t)|^q dx \right)^{p/q}\, dt \right)^{1/p}     
\end{equation*}
with the standard modification in the cases $p=\infty$ or $q=\infty$. We also work with mixed spaces such as $L^{q}_T H^s=L^{q}_T H^s_x$, $1\leq q \leq \infty$. 
\end{itemize}


\section{Preliminaries}\label{Secprelimin}

In this section, we introduce some preliminary results necessary for the deduction of our main results. Firstly, we mention some key results on classical pseudo-differential operators and useful consequences. Finally, we present some additional commutator estimates.

\subsection{Pseudo-differential calculus} The deduction of our main theorems depends on the theory of classical pseudo-differential operators. Although we work on spatial dimension one, we present the results in this part for an arbitrary spatial dimension. 

Let $m\in\mathbb{R}$, and  let $a(x,\xi)\in \mathcal{C}^{\infty}(\mathbb{R}^d \times \mathbb{R}^d)$. We say that \emph{$a$ is a symbol of order $m$} if for all multi-indices $\alpha,\beta\in\mathbb{N}^d$ there exists a positive constant $C_{\alpha,\beta}$ such that 
    \begin{equation*}
    \label{symbol}
        |\partial_x^{\alpha} \partial_{\xi}^{\beta} a(x,\xi)| \leq C_{\alpha,\beta} \langle \xi \rangle^{m-|\beta|} ~~ \text{ for all } x,\xi\in\mathbb{R}^d,
    \end{equation*}
where $\langle \xi \rangle = (1+|\xi|^2)^{1/2}$. The set $\mathcal{S}^m(\mathbb{R}^d \times \mathbb{R}^d)=\mathcal{S}^m$ consists of all the symbols of order $m$. A pseudo-differential operator is a mapping $f\mapsto \Psi_a f$ given by $$\Psi_a f(x) = (2\pi)^{-d} \int_{\mathbb{R}^d} e^{i\langle x,\xi \rangle} a(x,\xi) \widehat{f}(\xi) d\xi,$$ where $a(x,\xi)\in\mathcal{S}^m$ is the symbol of $\Psi_a$ for some $m\in\mathbb{R}$. In this case, we say that $\Psi_{a}\in \mathrm{OP}\mathcal{S}^m$.

\begin{remark}
In this work, we consider estimates with weighted functions $x\mapsto \psi(x)$, where $\psi\in C^{\infty}(\mathbb{R}^d)$ is bounded with bounded derivatives of any order. This implies that $\psi\in \mathcal{S}^0$. Since $\psi$ does not depend on the frequency variable $\xi$, for simplicity in the notation, we denote its symbol and corresponding operator by $\psi$.    
\end{remark}

\begin{lemma}\label{LemmaAllpseudo}
Let $m,m_1,m_2\in \mathbb{R}$. 
    \begin{itemize}
        \item[(i)]  If $m_1\leq m_2$, then $\mathcal{S}^{m_1} \subseteq \mathcal{S}^{m_2}$. 
        \item[(ii)] Let $\Psi_a\in \mathrm{OP}\mathcal{S}^0$ (i.e., $a\in \mathcal{S}^0)$. Then the operator $\Psi_a$ initially defined on $S(\mathbb{R}^d)$, extends to a bounded operator from $L^2(\mathbb{R}^d)$ to itself.
        \item[(iii)] Let $1<p<\infty$, $\Psi_a\in \mathrm{OP}\mathcal{S}^m$ (i.e., $a\in \mathcal{S}^m)$, then
\begin{equation*}
    \|\Psi_a(f)\|_{L^p}\lesssim \|J^m f\|_{L^p}.
\end{equation*}
\item[(iv)]     Let $a\in\mathcal{S}^{m_1}$ and $b\in\mathcal{S}^{m_2}$. Then, there exists a symbol $c\in\mathcal{S}^{m_1+m_2}$ such that $\Psi_c = \Psi_a \circ \Psi_b  \in \mathrm{OP}\mathcal{S}^{m_1+m_2}$. Moreover, 
\begin{equation*}
  c\approx \sum_{\mu} \frac{(-i)^{|\mu|}}{\mu!} (\partial_\xi^{\mu} a)(\partial_x^{\mu} b)  
\end{equation*}
in the sense that $$c - \sum_{|\mu| < N} \frac{(-i)^{|\mu|}}{\mu!} (\partial_\xi^{\mu} a)(\partial_x^{\mu} b) \in \mathcal{S}^{m_1+m_2-N}$$ for all integers $N>0$.

\item[(v)]     Let $a\in\mathcal{S}^{m_1}$ and $b\in\mathcal{S}^{m_2}$. Then the symbol of the commutator $[\Psi_a,\Psi_b] = \Psi_a \Psi_b - \Psi_b \Psi_a\in \mathrm{OP}\mathcal{S}^{m_1+m_2-1}$ is given by 
\begin{equation*}
  c = \frac{1}{i} \sum_{j=1}^{d} \frac{\partial a}{\partial \xi_j} \frac{\partial b}{\partial x_j} - \frac{\partial a}{\partial x_j} \frac{\partial b}{\partial \xi_j} \mod{\mathcal{S}^{m_1+m_2-2}}.  
\end{equation*}
\end{itemize}
\end{lemma}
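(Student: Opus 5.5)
Lemma \ref{LemmaAllpseudo} collects standard facts from the classical pseudo-differential calculus, and my plan is to verify each item directly from the definition of $\mathcal{S}^m$, citing the standard literature (e.g., Stein's \emph{Harmonic Analysis}, Taylor's \emph{Pseudodifferential Operators}) for the oscillatory-integral manipulations. Item (i) is immediate: if $m_1\le m_2$ then $\langle\xi\rangle^{m_1-|\beta|}\le\langle\xi\rangle^{m_2-|\beta|}$, so every seminorm bound for $a\in\mathcal{S}^{m_1}$ is also a bound in $\mathcal{S}^{m_2}$.

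For (iv) I will write the kernel of $\Psi_a\Psi_b$ as an oscillatory integral and obtain
\[
c(x,\xi)=(2\pi)^{-d}\iint e^{-i\langle y,\eta\rangle}a(x,\xi+\eta)\,b(x+y,\xi)\,dy\,d\eta.
\]
Next I will Taylor expand $a(x,\xi+\eta)$ in $\eta$ to order $N$ and convert each $\eta^\mu$ into $D_y^\mu$ acting on $b$ by integrating by parts; this produces the terms $\frac{(-i)^{|\mu|}}{\mu!}\partial_\xi^\mu a\,\partial_x^\mu b$. The key difficulty, and the place where care is needed, is showing that the remainder lies in $\mathcal{S}^{m_1+m_2-N}$. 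To do this I will split the integral into the regions $|\eta|\le\frac12\langle\xi\rangle$ and $|\eta|>\frac12\langle\xi\rangle$. In the first region $\langle\xi+t\eta\rangle\sim\langle\xi\rangle$, so the Taylor remainder gains the factor $\langle\xi\rangle^{m_1-N}$. In the second region I will integrate by parts repeatedly in $y$, using $(1+|\eta|^2)^{-k}(1-\Delta_y)^k$, and in $\eta$, using $(1+|y|^2)^{-k}(1-\Delta_\eta)^k$, to get arbitrary decay. Derivatives of the remainder are handled in the same way, since they have the same structure.

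Item (v) then follows by applying (iv) to both $\Psi_a\Psi_b$ and $\Psi_b\Psi_a$ with $N=2$. The zeroth-order terms $ab-ba$ cancel, and the first-order terms give $-i\sum_j(\partial_{\xi_j}a\,\partial_{x_j}b-\partial_{\xi_j}b\,\partial_{x_j}a)$, which is the stated symbol modulo $\mathcal{S}^{m_1+m_2-2}$.

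For (ii) I will follow the Calder\'on--Vaillancourt/Cotlar--Stein route. Choose a Littlewood--Paley partition $\sum_k\varphi_k(\xi)=1$ and set $a_k=a\varphi_k$. Each kernel $K_k(x,y)=\int e^{i(x-y)\xi}a_k(x,\xi)\,d\xi$ satisfies $|K_k|\lesssim 2^{kd}(1+2^k|x-y|)^{-M}$ uniformly in $k$, by integrating by parts in $\xi$. Cotlar--Stein almost orthogonality then controls $\sum_k\Psi_{a_k}$ on $L^2$; the decisive estimate is $\|\Psi_{a_j}^*\Psi_{a_k}\|\lesssim 2^{-|j-k|}$, which follows from frequency localization. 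Alternatively, one can use H\"ormander's square-root trick: pick $M$ with $M^2>\sup|a|^2$, write $M^2-|a|^2=|b|^2$ with $b\in\mathcal{S}^0$, and use (iv) to reduce to an operator of order $-1$, which is $L^2$-bounded by Schur's test. I regard this route as cleaner.

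For (iii), write $\Psi_a=\Psi_{aJ^{-m}}\circ J^m$ with $a\langle\xi\rangle^{-m}\in\mathcal{S}^0$, so it suffices to treat $m=0$. In that case the kernel estimates above show that $K(x,y)$ is a Calder\'on--Zygmund kernel: $|K|\lesssim|x-y|^{-d}$ and $|\nabla K|\lesssim|x-y|^{-d-1}$, with rapid decay away from the diagonal. Combined with the $L^2$ bound from (ii), standard Calder\'on--Zygmund theory gives $L^p$ boundedness for $1<p<\infty$.

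The main obstacle is the remainder estimate in (iv), namely the non-compact region $|\eta|\gtrsim\langle\xi\rangle$, where the symbol estimates do not improve. The integration-by-parts decay handles this region, and that decay is the technical heart of the argument. Since these results are classical, in the paper I would ultimately cite them rather than reproduce the details.
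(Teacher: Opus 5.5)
Your proposal is correct and matches the paper's approach: the paper gives no argument of its own and simply cites Stein and H\"ormander for all five items, and your sketches (the oscillatory-integral composition formula with a cutoff at $|\eta|\sim\langle\xi\rangle$ for (iv)--(v), Cotlar--Stein or the square-root trick for (ii), and Calder\'on--Zygmund theory after factoring out $J^m$ for (iii)) are the classical proofs found in those references. Two minor points: the square-root trick also uses the adjoint rule ($\Psi_a^*\in\mathrm{OP}\mathcal{S}^0$ with symbol $\bar a$ modulo $\mathcal{S}^{-1}$), which is not among the listed items, and the region splitting in (iv) should use a smooth cutoff $\chi(\eta/\langle\xi\rangle)$ so that the integrations by parts in $\eta$ are legitimate.
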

\begin{proof}
The proofs of these results can be consulted in \cite{stein1993harmonic,Hormander2007}.  
\end{proof}

Next, we recall some useful properties of pseudo-differential operators and fractional operators $|D|^sJ^s$, when they are applied to functions with separated supports.

\begin{lemma}\label{IneqSupports}    
Let $\alpha$ be a multi-index, $\Psi_a \in \mathrm{OP}\mathcal{S}^m$, and let $p\in [2,\infty]$. If $f\in L^2(\mathbb{R}^d)$ and $g\in L^p(\mathbb{R}^d)$ satisfy $$\dist(\supp(f),\supp(g)) \geq \delta > 0,$$ then $$\|g \partial_x^{\alpha} \Psi_a f\|_{L^2(\mathbb{R}^d)} \lesssim \|g\|_{L^p(\mathbb{R}^d)} \|f\|_{L^2(\mathbb{R}^d)}.$$
\end{lemma}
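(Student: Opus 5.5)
The approach is to write $\Psi_a$ through its Schwartz kernel, use the separation of supports to see that only the part of the kernel away from the diagonal is involved, and exploit the fact that this off-diagonal part is smooth and decays rapidly. Set $T=\partial_x^{\alpha}\Psi_a$. By Lemma \ref{LemmaAllpseudo}(iv), $T\in \mathrm{OP}\mathcal{S}^{m+|\alpha|}$ with symbol $b(x,\xi)=\sum_{\mu\le\alpha}\binom{\alpha}{\mu}(i\xi)^{\alpha-\mu}\partial_x^{\mu}a(x,\xi)$, an exact finite sum. It is therefore enough to prove the estimate for a general $T=\Psi_b$ with $b\in\mathcal{S}^{m'}$, where $m'$ is arbitrary. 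For $f\in S(\mathbb{R}^d)$ we may formally write
\[
Tf(x)=\int K(x,y)f(y)\,dy,\qquad K(x,y)=(2\pi)^{-d}\int e^{i\langle x-y,\xi\rangle}b(x,\xi)\,d\xi,
\]
as an oscillatory integral. The density argument needed to pass from $S(\mathbb{R}^d)$ to general $f$ is postponed to the end.

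The key step is an off-diagonal kernel bound: for $|x-y|\ge\delta$ and every $L\in\mathbb{N}$,
\[
|K(x,y)|\lesssim_{L,\delta} |x-y|^{-L}.
\]
To get it, insert a cutoff $\chi(\epsilon\xi)$ and use $e^{i\langle x-y,\xi\rangle}=|x-y|^{-2k}(-\Delta_\xi)^k e^{i\langle x-y,\xi\rangle}$. Integrating by parts $k$ times moves the derivatives onto $b$, and $|\partial_\xi^\beta b|\lesssim\langle\xi\rangle^{m'-|\beta|}$ gives an integrand of size $\langle\xi\rangle^{m'-2k}$. Choosing $2k>m'+d$ makes this integrable, uniformly in $\epsilon$. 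Letting $\epsilon\to0$ then gives $|K(x,y)|\lesssim|x-y|^{-2k}$, and $k$ can be taken as large as needed.

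With this bound, define $K_\delta(x,y)=K(x,y)\mathbf{1}_{|x-y|\ge\delta}$. Since $g$ vanishes off $\supp g$ and $f$ vanishes off $\supp f$, every pair $(x,y)$ contributing to the integral satisfies $|x-y|\ge\delta$, so $gTf(x)=g(x)\int K_\delta(x,y)f(y)\,dy$. Taking $L>d$ in the kernel bound gives $\sup_x\int|K_\delta(x,y)|\,dy$ and $\sup_y\int|K_\delta(x,y)|\,dx$ both finite, and so does any $L$ with $L>d$. Schur's test then gives $|Tf|\le h$ with $\|h\|_{L^2}\lesssim\|f\|_{L^2}$. Note, however, that for $p>2$ the bound $\|gh\|_{L^2}\le\|g\|_{L^p}\|h\|_{L^2}$ is not available, so a different estimate is needed. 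Write $\frac12=\frac1p+\frac1q$ with $q=\frac{2p}{p-2}\in[2,\infty]$. It suffices to show $\|\int|K_\delta(\cdot,y)||f(y)|\,dy\|_{L^q}\lesssim\|f\|_{L^2}$ and then apply Hölder. Bound $|K_\delta(x,y)|\le k(x-y)$ with $k(z)=C|z|^{-L}\mathbf{1}_{|z|\ge\delta}$. For $L>d$ this $k$ lies in every $L^r$ with $1\le r\le\infty$. Young's inequality with $1+\frac1q=\frac1r+\frac12$ gives $\|k*|f|\|_{L^q}\le\|k\|_{L^r}\|f\|_{L^2}$. The case $p=\infty$ is just $q=2$, handled by the same argument with $r=1$.

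The main obstacle is justifying the kernel representation rigorously for $f\in L^2$ and not only for Schwartz $f$, because $f$ is only assumed to be in $L^2$ and $T$ need not be bounded on $L^2$. To handle this I would approximate $f$ by functions $f_n\in C_0^\infty$ whose supports stay within distance $\delta/2$ of $\supp f$, obtained by mollifying and truncating. The argument above is then run with $\delta/2$ in place of $\delta$. Interpreting $gTf$ as the limit of $gTf_n$, the uniform bound passes to the limit, since the off-diagonal operator is bounded $L^2\to L^q$. The same separation argument also shows that $Tf$ is well defined as a function on a neighborhood of $\supp g$. I expect the care required in this limiting and well-definedness step, rather than the kernel estimate itself, to be the delicate point.
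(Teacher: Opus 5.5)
Your proof is correct and follows essentially the same route as the references the paper cites instead of giving a proof (Kenig--Linares--Ponce--Vega and the follow-ups by Freire--Mendez--Ria\~no and Mendez): off-diagonal decay $|K(x,y)|\lesssim_L |x-y|^{-L}$ of the kernel of $\partial_x^{\alpha}\Psi_a$, then H\"older with $\frac12=\frac1p+\frac1q$ and Young's inequality, with the density and pseudo-locality step handled as you describe. One small slip: it is for $p<\infty$, not $p>2$, that the Schur bound does not suffice, but your Young's-inequality argument covers every $p\in[2,\infty]$ anyway.
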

\begin{proof}
We refer to \cite{KenigLinaresPonceVega2018} and \cite{Mendez2024,FreireMendezRiano2022}.
\end{proof}

By using the above lemma, it is obtained a fractional derivative version of it with the operators $|D|^s$ and $J^s$. 

\begin{lemma}
\label{Ds-Js}
    Let $s\in\mathbb{R}$ and $s_1 \in (0,1)$. If $f \in L^{\infty}(\mathbb{R}^d)$ and $g\in L^2(\mathbb{R}^d)$ satisfy $$\dist (\supp(f) , \supp(g) ) \geq \delta > 0,$$ then $$\|f \, |D|^{s_1} J^s g\|_{L^2(\mathbb{R}^d)} \lesssim \|f\|_{L^\infty(\mathbb{R}^d)} \|g\|_{L^2(\mathbb{R}^d)}.$$

\end{lemma}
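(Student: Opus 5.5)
The plan is to reduce the estimate to Lemma \ref{IneqSupports} by splitting off the non-smooth part of the symbol $|\xi|^{s_1}$ near the origin. Fix $\phi\in \mathcal{C}_0^\infty(\mathbb{R}^d)$ with $\phi\equiv 1$ on $|\xi|\leq 1$ and supported in $|\xi|\leq 2$. Decompose
\begin{equation*}
|D|^{s_1}J^s g = \Psi_{a}g + \mathcal{K} g,\qquad a(\xi)=(1-\phi(\xi))|\xi|^{s_1}\langle\xi\rangle^{s},\quad \widehat{\mathcal{K} g}(\xi)=\phi(\xi)|\xi|^{s_1}\langle \xi\rangle^s\widehat{g}(\xi).
\end{equation*}
Since $1-\phi$ vanishes near the origin, $a$ is smooth and satisfies $|\partial_\xi^\beta a(\xi)|\lesssim\langle\xi\rangle^{s+s_1-|\beta|}$, so $a\in\mathcal{S}^{s+s_1}$ and $\Psi_a\in \mathrm{OP}\mathcal{S}^{s+s_1}$. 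Applying Lemma \ref{IneqSupports} with $\alpha=0$, $p=\infty$, the roles $f\leftrightarrow g$ as stated there (the weight is $f\in L^\infty$, the function acted on is $g\in L^2$), gives $\|f\,\Psi_a g\|_{L^2}\lesssim \|f\|_{L^\infty}\|g\|_{L^2}$.

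For the low-frequency piece, I would not even need the support separation: $\mathcal{K}$ has the bounded multiplier $\phi(\xi)|\xi|^{s_1}\langle\xi\rangle^s$ (bounded because $s_1>0$ and the support is compact), so by Plancherel $\|f\,\mathcal{K}g\|_{L^2}\leq \|f\|_{L^\infty}\|\mathcal{K}g\|_{L^2}\lesssim \|f\|_{L^\infty}\|g\|_{L^2}$. Summing the two bounds yields the claim, with implicit constant depending on $\delta$, $s$, $s_1$, $d$.

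The only delicate point is the first step: making sure Lemma \ref{IneqSupports} applies to $\Psi_a$ of arbitrary (possibly large positive) order $s+s_1$; this is exactly what the lemma provides, since off-diagonal the kernel of any classical pseudo-differential operator is smooth and rapidly decaying, namely $|K(x,y)|\lesssim_{\delta,N}|x-y|^{-N}$ for $|x-y|\geq\delta$, which gives the $L^2$ bound via Young's inequality. Should one prefer to avoid invoking the lemma, I would verify this kernel decay directly by integrating by parts in $\xi$ using the symbol estimates for $a$. Note the restriction $s_1\in(0,1)$ is only used to keep the low-frequency multiplier bounded (any $s_1>0$ would do), and the singularity of $|\xi|^{s_1}$ at the origin is what forces the splitting in the first place, since $|\xi|^{s_1}$ itself is not a classical symbol.
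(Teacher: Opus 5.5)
Your argument is correct. Note that the paper does not prove this lemma at all: it only cites \cite[Lemma 2.6]{FreireMendezRiano2022}. So your proof is a self-contained replacement rather than a reconstruction of an argument given in the text.

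The two pieces both check out.
\begin{itemize}
\item The high-frequency symbol $(1-\phi(\xi))|\xi|^{s_1}\langle\xi\rangle^{s}$ lies in $\mathcal{S}^{s+s_1}$. Hence Lemma \ref{IneqSupports}, with $\alpha=0$, $p=\infty$ and the roles of the two functions swapped, gives the bound for that part.
\item The low-frequency piece is a bounded Fourier multiplier. It therefore needs no support separation at all, since $\|f\,\mathcal{K}g\|_{L^2}\leq\|f\|_{L^\infty}\|\mathcal{K}g\|_{L^2}$.
\end{itemize}
The key point you identify is exactly right. The only obstruction to applying Lemma \ref{IneqSupports} directly is the singularity of $|\xi|^{s_1}$ at the origin, and the part of the symbol carrying it is harmless because it is bounded.

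An alternative route stays entirely within the paper's own tools: the binomial expansion of Proposition \ref{ExpDs-Js}, with $s_1$ in place of $s$. One writes
$|D|^{s_1}J^s=J^{s+s_1}-\sum_{j=1}^{M}\binom{s_1/2}{j}(-1)^{j+1}J^{s+s_1-2j}-R_M$.
By \eqref{estimateJDGen}, $\|R_Mg\|_{L^2}\lesssim\|g\|_{H^{s+s_1-2(M+1)}}\leq\|g\|_{L^2}$ as soon as $2(M+1)\geq s+s_1$. The finitely many Bessel-potential terms are then handled by Lemma \ref{IneqSupports}.

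That route needs the separated-support lemma only for powers of $J$, at the cost of a series expansion. Your smooth frequency cutoff is shorter and avoids the expansion. As you observe, both arguments work verbatim for every $s_1>0$, so the restriction $s_1\in(0,1)$ in the statement is not needed.
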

\begin{proof} 
We refer to \cite[Lemma 2.6]{FreireMendezRiano2022}. 
\end{proof}

In the following lemma, we give formulas connecting the propagation of regularity effect in different domains. This type of estimates can be found in \cite{KenigLinaresPonceVega2018} and \cite{Mendez2024}, and the version presented here in taken from \cite{FreireMendezRiano2022}.

\begin{lemma} \label{LocReg}
Let $f \in H^{-m}(\mathbb{R}^d)$ for some integer $m\geq 0$, and let $\theta_1,\theta_2 \in \mathcal{C}^{\infty}(\mathbb{R}^d) \setminus \{0\}$ be such that $0\leq \theta_1 , \theta_2 \leq 1$, $$\dist(\supp(1 - \theta_1), \supp(\theta_2)) \geq \delta > 0,$$ and satisfying $\partial^{\gamma} \theta_1 , \partial^{\gamma} \theta_2 \in L^{\infty}(\mathbb{R}^d)$ for each multi-index $\gamma$.
\begin{itemize}
\item[(i)] If $\beta \in [0,2)$ and $\theta_1 f, \theta_1 |D|^{\beta} f \in L^2(\mathbb{R}^d)$, then $$\|\theta_2 J^{\beta} f\|_{L^2} \lesssim \|\theta_1 f\|_{L^2} + \|\theta_1 |D|^{\beta} f\|_{L^2} + \|J^{-m}f\|_{L^2},$$ and so $\theta_2 J^{\beta} f \in L^2(\mathbb{R}^d)$.

\item[(ii)] If $\beta \in [0,2)$ and $\theta_1 J^{\beta} f \in L^2(\mathbb{R}^d)$, then $$\|\theta_2 f\|_{L^2} + \|\theta_2 |D|^{\beta} f\|_{L^2} \lesssim \|\theta_1 J^{\beta} f\|_{L^2} + \|J^{-m}f\|_{L^2},$$ and so $\theta_2 f, \theta_2 |D|^{\beta} f \in L^2(\mathbb{R}^d)$.

\item[(iii)] If $s>0$, $r\in [0,s]$, and $\theta_1 J^{s} f \in L^2(\mathbb{R}^d)$, then $$\|\theta_2 J^{r} f\|_{L^2} \lesssim \|\theta_1 J^{s} f\|_{L^2} + \|J^{-m}f\|_{L^2},$$ and so $\theta_2 J^{r} f \in L^2(\mathbb{R}^d)$.

\item[(iv)] If $s>0$ and $\theta_1 J^{s} f \in L^2(\mathbb{R}^d)$, then $$\|J^{s} (\theta_2 f)\|_{L^2} \lesssim \|\theta_1 J^{s} f\|_{L^2} + \|J^{-m}f\|_{L^2},$$ and so $J^{s} (\theta_2 f) \in L^2(\mathbb{R}^d)$.
  \end{itemize}
\end{lemma}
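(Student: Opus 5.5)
The plan is a pseudo-differential argument. We localize with auxiliary cutoffs and compare $|D|^\beta$ and $J^\beta$ on the support of $\theta_2$, absorbing all nonlocal contributions into $\|J^{-m}f\|_{L^2}$ by means of Lemma \ref{IneqSupports}.

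First choose an intermediate cutoff $\theta_3\in C^\infty$ with $0\le\theta_3\le 1$, $\theta_3\equiv 1$ on a $\delta/3$-neighborhood of $\supp\theta_2$, and $\dist(\supp\theta_3,\supp(1-\theta_1))\ge\delta/3$. Then $\theta_1\equiv1$ on $\supp\theta_3$, so $\theta_3 f=\theta_3\theta_1 f$. For any operator $T$ of order $\le \beta$ we split
\[
\theta_2 T f=\theta_2 T(\theta_3 f)+\theta_2 T((1-\theta_3)f).
\]
For the second piece, $\theta_2$ and $1-\theta_3$ have separated supports. We write $(1-\theta_3)f=(1-\theta_3)J^{m}J^{-m}f$ and commute $J^m$ past $1-\theta_3$; the commutator is a pseudo-differential operator of order $m-1$, and iterating this (or just quoting Lemma \ref{IneqSupports} with $\Psi_a=TJ^m$ acting on $J^{-m}f$, after adjusting the cutoffs) gives the bound $\lesssim\|J^{-m}f\|_{L^2}$. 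The non-smoothness of $|\xi|^\beta$ at the origin is harmless here, because the kernel of $|D|^\beta$ away from the diagonal is smooth and decays like $|x|^{-d-\beta}$. Lemma \ref{Ds-Js} handles exactly this case when $\beta<1$. When $\beta\in[1,2)$ we write $|D|^\beta=|D|^{\beta-1}\mathcal{R}\cdot\nabla$ with Riesz transforms and argue in the same way.

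For (i), take $T=J^\beta$ and write $J^\beta=(J^\beta-|D|^\beta)+|D|^\beta$. The symbol $\langle\xi\rangle^\beta-|\xi|^\beta$ is bounded, since for $\beta<2$ it is $O(\langle\xi\rangle^{\beta-2})$ at infinity and continuous near zero. Hence $J^\beta-|D|^\beta$ is $L^2$-bounded, and $\|\theta_2(J^\beta-|D|^\beta)(\theta_3 f)\|\lesssim\|\theta_1 f\|$. For the remaining term we write
\[
|D|^\beta(\theta_3 f)=\theta_3|D|^\beta f+[|D|^\beta,\theta_3]f .
\]
The first summand is bounded by $\|\theta_1|D|^\beta f\|$. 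The commutator is the main obstacle: it has order $\beta-1<1$, but we only control $\theta_1 f$ in $L^2$, not a derivative. I would handle it via the Kato–Ponce/Calderón commutator estimate $\|[|D|^\beta,\theta_3]g\|_{L^2}\lesssim\|g\|_{H^{\beta-1}}$, applied with $g=\tilde\theta f+(1-\tilde\theta)f$, where $\tilde\theta$ is yet another cutoff equal to $1$ near $\supp\theta_3$. The far part is controlled by $\|J^{-m}f\|$ via separation of supports. For the near part, since $\beta-1<1$, we interpolate $\|\tilde\theta f\|_{H^{\beta-1}}\lesssim\|\tilde\theta f\|_{L^2}^{1-\theta}\|\tilde\theta f\|_{H^{\beta}}^{\theta}$ and absorb with $\epsilon$. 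Alternatively, and preferably, we do the localization directly with $J^\beta$: we bound $\|J^\beta(\tilde\theta f)\|\lesssim\|\tilde\theta f\|+\||D|^\beta(\tilde\theta f)\|$ and expand $|D|^\beta(\tilde\theta f)$ through the pointwise formula with kernel $|x-y|^{-d-\beta}$ and a second-order Taylor expansion of $\tilde\theta$. This is where $\beta<2$ is used.

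Part (ii) is symmetric: $|D|^\beta=J^\beta-(J^\beta-|D|^\beta)$, and $f=J^{-\beta}J^\beta f$ with $J^{-\beta}\in\mathrm{OP}\mathcal{S}^{-\beta}$, localized in the same way. For (iii) we write $J^r=J^{r-s}J^s$ with $J^{r-s}\in\mathrm{OP}\mathcal{S}^0$ and split $J^s f=\theta_1J^s f+(1-\theta_1)J^s f$. The second piece is handled by Lemma \ref{IneqSupports}, with $J^{r-s}(1-\theta_1)J^{s+m}$ acting on $J^{-m}f$. For (iv) we apply Lemma \ref{LemmaAllpseudo}(iv)–(v): $J^s\theta_2=\theta_2J^s+\sum_{j\ge1}c_j(\partial^j\theta_2)\Psi_{s-j}+\text{order }-m$. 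Each term is supported near $\supp\theta_2$, so it reduces to (iii) with $r=s-j$.
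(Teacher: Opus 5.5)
\textbf{Overall.} Your treatment of (ii)--(iv) is the standard local/far splitting: $L^2$-boundedness of $J^{r-s}$ and of $(J^\beta-|D|^\beta)J^{-\beta}$, the symbolic expansion of $[J^s,\theta_2]$, and separated-support bounds. The paper gives no proof of this lemma; it quotes it from \cite{FreireMendezRiano2022}. In (iv), your reduction to (iii) uses $r=s-j$, which may be negative; the same argument covers this, since only $r\le s$ is used.

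\textbf{The gap is in (i).} It lies in your handling of $\theta_2[|D|^\beta,\theta_3]f$.
\begin{itemize}
\item For $\beta\le 1$ your bound works: $\|\tilde\theta f\|_{H^{\beta-1}}\le\|\tilde\theta f\|_{L^2}\le\|\theta_1 f\|_{L^2}$.
\item For $\beta\in(1,2)$ the interpolation leaves a term $\epsilon\|\tilde\theta f\|_{H^{\beta}}$ that cannot be absorbed. Its finiteness is not known a priori; that is essentially what the lemma asserts. It is also a norm on a cutoff $\tilde\theta$ strictly larger than $\theta_2$, so it is not the left-hand side $\|\theta_2 J^\beta f\|_{L^2}$.
\item Your ``preferable'' kernel route has the same defect. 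The first-order Taylor term of $\tilde\theta(x)-\tilde\theta(y)$ produces $\nabla\tilde\theta\cdot\mathcal{T}f$, where $\mathcal{T}$ is a multiple of $\nabla|D|^{\beta-2}$, an operator of positive order $\beta-1$. It acts on $f$ near $\supp\nabla\tilde\theta$, where you control only $L^2$.
\end{itemize}
So, as written, (i) does not close for $\beta\in(1,2)$.

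\textbf{The missing observation.} Once multiplied by $\theta_2$, the commutator is not of order $\beta-1$ at all. Since $\theta_3\equiv1$ on a neighbourhood of $\supp\theta_2$, you have $\theta_2\theta_3=\theta_2$, and therefore
\[
\theta_2[|D|^\beta,\theta_3]f=\theta_2|D|^\beta(\theta_3 f)-\theta_2|D|^\beta f=-\theta_2|D|^\beta\big((1-\theta_3)f\big).
\]
This is exactly a separated-support term. It is bounded by $\|J^{-m}f\|_{L^2}$ by the argument you already give for the far piece, including your remark on the off-diagonal kernel of $|D|^\beta$.

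\textbf{A simpler route.} You can avoid commutators entirely by writing
\[
\theta_2J^\beta f=\theta_2(J^\beta-|D|^\beta)(\theta_1 f)+\theta_2(J^\beta-|D|^\beta)\big((1-\theta_1)f\big)+\theta_2|D|^\beta f .
\]
\begin{itemize}
\item The first term is bounded by $\|\theta_1 f\|_{L^2}$, because $\langle\xi\rangle^\beta-|\xi|^\beta$ is bounded for $\beta\in[0,2)$. This is the only place where $\beta<2$ is needed.
\item The second term is handled by separation of supports.
\item The third is bounded pointwise by $|\theta_1|D|^\beta f|$, since $\theta_1\equiv1$ on $\supp\theta_2$ and $0\le\theta_2\le1$.
\end{itemize}
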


\begin{proposition}\label{ExpDs-Js}
Let $s>0$. Then 
 \begin{equation}\label{conmexpansionJD}
    |D|^s=J^s - \sum_{j=1}^{\infty} \binom{s/2}{j} (-1)^{j+1} J^{-(2j-s)}, 
    \end{equation}
where the above identity is justified in $H^s(\mathbb{R}^d)$. Moreover, 
    \begin{equation}\label{estimateJD}
\|(J^s-|D|^s)f\|_{L^2(\mathbb{R}^d)}\lesssim \|f\|_{H^{s-2}(\mathbb{R}^d)}, 
\end{equation}
for all $f\in H^{s-2}(\mathbb{R}^{d})$, i.e., the operator $(J^s-|D|^s)$ admits a continuous extension to $H^{s-2}(\mathbb{R}^d)$. More generally, given $M\geq 0$ integer, it follows
\begin{equation}\label{estimateJDGen}
 \begin{aligned}
\Big\|\Big(J^s-|D|^s-\sum_{j=1}^M \binom{s/2}{j} (-1)^{j+1} J^{-(2j-s)} \Big)f\Big\|_{L^2(\mathbb{R}^d)}\lesssim \|f\|_{H^{s-2(M+1)}(\mathbb{R}^d)},
 \end{aligned}   
\end{equation}
for all $f\in H^{s-2(M+1)}(\mathbb{R}^d)$,  where we adopt the convention that an empty sum equals zero, e.g., $\sum_{j=1}^0(\dots)=0$.
\end{proposition}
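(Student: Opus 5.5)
The whole proposition is an identity between Fourier multipliers, so I would pass to the frequency side and expand the symbol. With $\langle\xi\rangle^2=1+|\xi|^2$ we have $|\xi|^2=\langle\xi\rangle^2(1-\langle\xi\rangle^{-2})$, hence
\begin{equation*}
|\xi|^s=\langle\xi\rangle^s\big(1-\langle\xi\rangle^{-2}\big)^{s/2}.
\end{equation*}
Put $z=\langle\xi\rangle^{-2}\in(0,1]$ and use the binomial series $(1-z)^{s/2}=\sum_{j\ge0}\binom{s/2}{j}(-1)^j z^j$. This gives
\begin{equation*}
|\xi|^s=\langle\xi\rangle^s-\sum_{j\ge1}\binom{s/2}{j}(-1)^{j+1}\langle\xi\rangle^{s-2j},
\end{equation*}
which is exactly the symbol of \eqref{conmexpansionJD}.

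The binomial series also converges absolutely at $z=1$ when $s>0$, because $\binom{s/2}{j}=O(j^{-1-s/2})$. Consequently the series above converges absolutely and uniformly in $\xi$ after dividing by $\langle\xi\rangle^s$: each term satisfies $\big|\binom{s/2}{j}\big|\langle\xi\rangle^{s-2j}\le\big|\binom{s/2}{j}\big|\langle\xi\rangle^s$. For $f\in H^s$, dominated convergence in $L^2$ on the Fourier side then justifies the identity in $H^s(\mathbb{R}^d)$, with the partial sums of operators converging strongly.

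For \eqref{estimateJDGen}, I would bound the remainder symbol directly. The symbol of the operator in question is
\begin{equation*}
r_M(\xi)=\sum_{j\ge M+1}\binom{s/2}{j}(-1)^{j+1}\langle\xi\rangle^{s-2j}.
\end{equation*}
Since $\langle\xi\rangle^{-2j}\le\langle\xi\rangle^{-2(M+1)}$ for $j\ge M+1$, we get
\begin{equation*}
|r_M(\xi)|\le\langle\xi\rangle^{s-2(M+1)}\sum_{j\ge M+1}\Big|\binom{s/2}{j}\Big|\lesssim\langle\xi\rangle^{s-2(M+1)}.
\end{equation*}
Plancherel then yields $\|\Psi_{r_M}f\|_{L^2}\lesssim\|f\|_{H^{s-2(M+1)}}$. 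This bound holds first on the dense class $H^s$, and since $r_M$ is a bounded function times $\langle\xi\rangle^{s-2(M+1)}$, the operator extends continuously to $H^{s-2(M+1)}$. The case $M=0$ is \eqref{estimateJD}.

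The only delicate point is summability of $\sum_j|\binom{s/2}{j}|$, which controls the coefficients uniformly up to $z=1$, that is, near $\xi=0$. This follows from $\big|\binom{s/2}{j}\big|=\big|\Gamma(j-s/2)\big|/\big(|\Gamma(-s/2)|\,j!\big)\sim c\,j^{-1-s/2}$ when $s/2\notin\mathbb{Z}$. When $s/2\in\mathbb{Z}$ the series is a finite sum and the argument is trivial.
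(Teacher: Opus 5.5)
Your proposal is correct and follows the paper's proof. Both arguments expand $(1-\langle\xi\rangle^{-2})^{s/2}$ in a binomial series and use the absolute summability of $\sum_j|\binom{s/2}{j}|$ to bound the tail uniformly by $\langle\xi\rangle^{s-2(M+1)}$. The differences are minor: you bound the remainder symbol pointwise and apply Plancherel, whereas the paper applies the triangle inequality to the operator series using $\|J^{-\beta}\|_{L^2\to L^2}\le 1$. You also justify the summability via $|\binom{s/2}{j}|\sim c\,j^{-1-s/2}$, and you write out the general $M$ case that the paper leaves as ``a similar argument''.
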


\begin{proof}
We use the binomial expansion (e.g., \cite{BourgainLi2014}) to get 
    \begin{equation}\label{Bexpansion1}
    \begin{aligned}
        (\langle \xi \rangle^{s} - |\xi|^{s}) = \langle \xi \rangle^{s} (1 - \langle \xi \rangle^{-s} |\xi|^{s})&= \langle \xi \rangle^{s} (1 - (1-\langle \xi \rangle^{-2})^{s/2})\\
        &= \langle \xi \rangle^{s} \left(1 - \sum_{j=0}^{\infty} \binom{s/2}{j} \frac{(-1)^j}{\langle \xi \rangle^{2j}} \right)\\
        &= \sum_{j=1}^{\infty} \binom{s/2}{j} \frac{(-1)^{j+1} \langle \xi \rangle^{s}}{\langle \xi \rangle^{2j}}.
    \end{aligned}
    \end{equation}
Notice that the previous expression makes sense for each $\xi\in \mathbb{R}^d$, because the series $\sum_{j=1}^{\infty} \binom{\alpha}{j}$ converges absolutely for any real number $\alpha>0$.

On the other hand, since $\sup_{\xi\in \mathbb{R}^d}\frac{1}{\langle \xi \rangle^{\beta}}\leq 1$, we have $\|J^{-\beta}f\|_{L^2}\leq \|f\|_{L^2}$, for all $\beta \geq 0$. It follows
    \begin{align*}
        \|(J^s - |D|^s) f\|_{L^2} \leq& \sum_{j=1}^{\infty} \left|\binom{s/2}{j}\right| \|J^{-(2j-s)}f\|_{L^2}\\ =& \sum_{j=1}^{\infty} \left|\binom{s/2}{j}\right| \|J^{2-2j}J^{s-2}f\|_{L^2}\\
        \leq&\left( \sum_{j=1}^{\infty} \left|\binom{s/2}{j}\right| \right) \|J^{s-2}f\|_{L^2} \lesssim \|f\|_{H^{s-2}}.
    \end{align*}
The above inequality shows that \eqref{conmexpansionJD} is valid in $H^s(\mathbb{R}^{d})$, and by density, the operator $J^s-|D|^s$ can be extended to $H^{s-2}(\mathbb{R}^d)$. A similar argument yields \eqref{estimateJDGen}. 
\end{proof}


\subsection{Commutator estimates} 

To establish bounds for the nonlinearities in \eqref{Benjamineq}, we consider some well-known estimates for commutators.

\begin{lemma} \label{Kato-Ponce}
    If $s>0$ and $p\in (1,\infty)$, then $$\|[J^s,f]g\|_{L^p(\mathbb{R}^d)} \lesssim \|\nabla f\|_{L^{\infty}(\mathbb{R}^d)} \|J^{s-1} g\|_{L^{p}(\mathbb{R}^d)} + \|J^{s} f\|_{L^{p}(\mathbb{R}^d)}\|g\|_{L^{\infty}(\mathbb{R}^d)}.$$
\end{lemma}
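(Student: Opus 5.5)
This is the Kato--Ponce commutator estimate, and I plan to prove it by a Littlewood--Paley paraproduct decomposition combined with Coifman--Meyer multiplier bounds.

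\emph{Decomposition.} Write $f g = T_f g + T_g f + R(f,g)$, where $T_f g=\sum_k S_{k-3}f\,\Delta_k g$ is the low--high paraproduct and $R$ collects the frequency-comparable pieces. Then
\begin{equation*}
[J^s,f]g=\big(J^s T_f g - T_f J^s g\big) + \big(J^s T_g f + J^sR(f,g)\big) - \big(T_{J^s g} f + R(f,J^s g)\big).
\end{equation*}
I group the terms as follows.

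\emph{Main commutator term.} In the first bracket the output frequency is comparable to that of $\Delta_k g$, of size $2^k$. Its symbol is
\begin{equation*}
\sum_k \big(\langle \xi+\eta\rangle^s-\langle \eta\rangle^s\big)\,\phi_{k-3}(\xi)\,\psi_k(\eta),
\end{equation*}
with $\xi$ the frequency of $f$ and $\eta$ that of $g$. Since $|\xi|\ll |\eta|$, the mean value theorem writes $\langle \xi+\eta\rangle^s-\langle\eta\rangle^s$ as
\begin{equation*}
\xi\cdot\int_0^1 \nabla\langle\eta+\tau\xi\rangle^s\, d\tau .
\end{equation*}
The factor $\xi$ turns $S_{k-3}f$ into $S_{k-3}\nabla f$, and the remaining factor behaves like $\langle\eta\rangle^{s-1}$ times a Coifman--Meyer symbol. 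This bounds the first bracket by $\|\nabla f\|_{L^\infty}\|J^{s-1}g\|_{L^p}$. At low frequencies, where $J$ is not homogeneous, I will treat the piece $k\lesssim 0$ directly by Bernstein's inequality.

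\emph{Terms where $f$ is at high frequency.} These are $J^sT_gf$, $J^sR(f,g)$, $T_{J^sg}f$ and $R(f,J^sg)$. In each, the output frequency is at most comparable to the frequency of $f$. So $J^s$ can be moved onto $f$, producing the multiplier $\langle\xi+\eta\rangle^s\langle\xi\rangle^{-s}$. This multiplier is a Coifman--Meyer symbol when $|\eta|\lesssim|\xi|$ and $s>0$. These terms are then bounded by $\|J^sf\|_{L^p}\|g\|_{L^\infty}$.

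For $T_{J^sg}f$ and $R(f,J^sg)$, $J^s$ sits on the low or comparable factor $g$. In that case I use $\langle\eta\rangle^s\lesssim\langle\xi\rangle^s$ and move the weight to $f$ in the same way. For the resonant piece $R$ summability needs $s>0$: the sum over comparable blocks $\Delta_k f\,\tilde\Delta_k g$ projected to output frequency $2^j\lesssim 2^k$ carries the factor $2^{(j-k)s}$, which is summable in $k\ge j$.

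\emph{Main obstacle.} The delicate step is justifying the Coifman--Meyer theorem for these bilinear symbols with the $L^p\times L^\infty\to L^p$ endpoint, since $L^\infty$ is not covered by the basic $L^{p_1}\times L^{p_2}$ range. I would handle it via square-function arguments: use $\|(\sum_k|\Delta_k h|^2)^{1/2}\|_{L^p}\sim\|h\|_{L^p}$ for $1<p<\infty$, together with the pointwise bound $|S_kf|\le \|f\|_{L^\infty}$ (or $|S_k\nabla f|\le\|\nabla f\|_{L^\infty}$). The symbol is expanded in Fourier series on annuli to separate variables, and the paraproduct pieces are then controlled by Fefferman--Stein vector-valued maximal inequalities. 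Since this is classical (Kato--Ponce, Coifman--Meyer), in the paper one would simply cite it.
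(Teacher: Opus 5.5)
Your argument is the classical paraproduct/Coifman--Meyer proof of the Kato--Ponce commutator estimate, and the paper does nothing more than cite Kato and Ponce for it, so the two approaches agree. One caution if you want the argument self-contained: your square-function/pointwise-$L^\infty$/Fefferman--Stein device covers the low--high pieces, and it covers $J^sR(f,g)$ thanks to the $2^{(j-k)s}$ gain, but it does not cover $R(f,J^sg)=\sum_k\Delta_k f\,\tilde\Delta_k J^s g$, which carries no such gain and whose $L^p\times L^\infty\to L^p$ bound genuinely requires the Carleson-measure ($BMO$) part of the Coifman--Meyer theorem.
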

\begin{proof}
This inequality is a particular case of the one deduced by Kato and Ponce in \cite{KatoPonce1988}.
\end{proof}

\begin{lemma} \label{Grafakos-Oh}
Let $s>0$, $1<p<\infty$ and $p_1,p_2,q_1,q_2\in (1,\infty]$ such that $\frac{1}{p}=\frac{1}{p_1} + \frac{1}{q_1} =\frac{1}{p_2} + \frac{1}{q_2} $, then 
\begin{equation*}
   \||D|^s(fg)\|_{L^{p}(\mathbb{R}^d)} \lesssim \||D|^s f\|_{L^{p_1}(\mathbb{R}^d)} \|g\|_{L^{q_1}(\mathbb{R}^d)} + \|f\|_{L^{p_2}(\mathbb{R}^d)}\||D|^s g\|_{L^{q_2}(\mathbb{R}^d)} 
\end{equation*}
and
\begin{equation*}
\|J^s(fg)\|_{L^{p}(\mathbb{R}^d)} \lesssim \|J^s f\|_{L^{p_1}(\mathbb{R}^d)} \|g\|_{L^{q_1}(\mathbb{R}^d)} + \|f\|_{L^{p_2}(\mathbb{R}^d)}\|J^s g\|_{L^{q_2}(\mathbb{R}^d)}.    
\end{equation*}
\end{lemma}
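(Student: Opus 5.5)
This is the fractional Leibniz (Kato--Ponce) inequality, so the plan is to reduce it to the by-now standard argument of Grafakos and Oh. We use Littlewood--Paley theory and Bony's paraproduct decomposition. Write
\[
fg=\sum_{j}\Delta_j f\, S_{j-3}g+\sum_j S_{j-3}f\,\Delta_j g+\sum_{|j-k|\le 2}\Delta_j f\,\Delta_k g =: \Pi_1+\Pi_2+\Pi_3 ,
\]
where $\Delta_j$ are homogeneous Littlewood--Paley projections and $S_j=\sum_{k\le j}\Delta_k$. We estimate $|D|^s$ applied to each piece separately.

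For $\Pi_1$, each summand has Fourier support in an annulus $|\xi|\sim 2^j$. Hence $|D|^s(\Delta_jf\,S_{j-3}g)=\widetilde{\Delta}_j|D|^s(\Delta_j f\, S_{j-3}g)$, and the multiplier $|\xi|^s$ acts like $2^{js}$ on that annulus. We then write $|D|^s\Pi_1$ as a paraproduct with symbol smooth away from the origin, of the form $\sum_j 2^{js}\widetilde\Delta_j(\Delta_j f\,S_{j-3}g)$ up to a Coifman--Meyer multiplier. The Littlewood--Paley square function characterization then gives
\[
\||D|^s\Pi_1\|_{L^p}\lesssim \Big\|\Big(\sum_j |2^{js}\Delta_j f|^2\Big)^{1/2}\sup_j|S_jg|\Big\|_{L^p}\lesssim \||D|^sf\|_{L^{p_1}}\|Mg\|_{L^{q_1}} .
\]
The first factor is controlled by $\||D|^s f\|_{L^{p_1}}$ by square function estimates, and the second by the maximal function, bounded on $L^{q_1}$ including $q_1=\infty$. $\Pi_2$ is symmetric and yields the term with $p_2,q_2$.

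The main obstacle is the diagonal term $\Pi_3$, because its summands have Fourier support only in balls $|\xi|\lesssim 2^j$, not in annuli. Here we use $s>0$: writing $|D|^s\Pi_3=\sum_k\Delta_k|D|^s\Pi_3$, only the terms with $j\ge k-4$ contribute, so
\[
|D|^s\Pi_3=\sum_k 2^{ks}\sum_{j\ge k-4}2^{-js}\,\widetilde\Delta_k\big((2^{js}\Delta_jf)\,\widetilde\Delta_j g\big),
\]
where $\widetilde\Delta_j$ denotes a slightly fattened projection. The kernel $2^{(k-j)s}$ is summable in $j-k$ precisely because $s>0$. The bound then follows from the Fefferman--Stein vector-valued maximal inequality together with $\sup_j|\widetilde\Delta_jg|\lesssim Mg$, which gives
\[
\||D|^s\Pi_3\|_{L^p}\lesssim\||D|^sf\|_{L^{p_1}}\|g\|_{L^{q_1}} .
\]
The endpoint cases $p_1=\infty$ or $q_1=\infty$ require care, since Fefferman--Stein fails at $L^\infty$. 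Following Grafakos--Oh, we always place the $L^\infty$ factor on the maximal-function side and the square-function factor in the finite exponent. The hypothesis $1<p<\infty$ guarantees that at least one exponent in each pair is finite, so the roles can always be assigned this way.

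For $J^s$ we run the same argument with inhomogeneous projections. The low-frequency block $S_0$ is handled trivially by Hölder's inequality and the boundedness of $J^s S_0$ on $L^p$. Alternatively, the $J^s$ estimate follows from the $|D|^s$ one combined with $\|J^sh\|_{L^p}\sim\|h\|_{L^p}+\||D|^sh\|_{L^p}$ for $1<p<\infty$ and Hölder's inequality. Since this lemma is classical, in the paper we would simply cite Grafakos--Oh and Kato--Ponce rather than reproduce the argument.
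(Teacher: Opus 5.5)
The paper proves this lemma only by citing Grafakos--Oh, so your closing decision to cite it matches the paper. The sketch you give in its place, however, does not cover the full stated range. The claim that you can ``always place the $L^\infty$ factor on the maximal-function side'' fails for $\Pi_1$ when $p_1=\infty$, and symmetrically for $\Pi_2$ when $q_2=\infty$.

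In $\Pi_1=\sum_j\Delta_jf\,S_{j-3}g$ you have no freedom to assign roles. This piece must be bounded by $\||D|^sf\|_{L^{p_1}}\|g\|_{L^{q_1}}$, the derivative falls on the high-frequency factor $f$, and your argument produces the majorant $\bigl(\sum_j|2^{js}\Delta_jf|^2\bigr)^{1/2}Mg$. With $p_1=\infty$, $q_1=p$ this would require $\bigl\|\bigl(\sum_j|2^{js}\Delta_jf|^2\bigr)^{1/2}\bigr\|_{L^\infty}\lesssim\||D|^sf\|_{L^\infty}$. That bound is false, because the Littlewood--Paley square function is not bounded on $L^\infty$. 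Nor can the roles be swapped: $\sup_j|2^{js}\Delta_jf|$ would then be paired with $\bigl(\sum_j|S_{j-3}g|^2\bigr)^{1/2}$, which diverges since the low-frequency pieces $S_{j-3}g$ do not decay in $j$.

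The swap does work for $\Pi_3$. There $\sup_j|2^{js}\Delta_jf|\lesssim M(|D|^sf)$ is paired with the square-summable family $\widetilde\Delta_jg$. You have also misidentified the obstruction. Fefferman--Stein is only ever invoked in $L^p$ with $1<p<\infty$, and $q_1=\infty$ is harmless because $\|Mg\|_{L^\infty}\le\|g\|_{L^\infty}$. The real problem is the square function at $L^\infty$.

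Closing the gap needs an ingredient your sketch lacks, namely the endpoint paraproduct bound
$\bigl\|\sum_j\widetilde\Delta_j\bigl((2^{js}\Delta_jf)\,S_{j-3}g\bigr)\bigr\|_{L^p}\lesssim\||D|^sf\|_{BMO}\|g\|_{L^p}\lesssim\||D|^sf\|_{L^\infty}\|g\|_{L^p}$. One route is the Carleson-measure property of $\sum_j|2^{js}\Delta_jf|^2\,dx\,\delta_{t=2^{-j}}$ when $|D|^sf\in BMO$, combined with the Carleson embedding for $S_{j-3}g$ through the nontangential maximal function. Alternatively, it follows directly from the $L^\infty\times L^p\to L^p$ case of the Coifman--Meyer bilinear multiplier theorem.

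Without this, your sketch proves the lemma only when $p_1,q_2<\infty$. That restricted range does contain every instance used in the paper, namely $p_1=q_2=2$ and $q_1=p_2=\infty$, as in $\|J^s(u^k)\|_{L^2}\lesssim\|u\|_{L^\infty}^{k-1}\|J^su\|_{L^2}$. Your reduction of the $J^s$ inequality to the $|D|^s$ one is sound.
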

\begin{proof}
    We refer to \cite{GrafakosOh2014}.
\end{proof}

\begin{lemma} \label{CommutatorKLPV}
    Let $\phi\in\mathcal{C}^{\infty}(\mathbb{R})$ with $\phi'\in\mathcal{C}_0^{\infty}(\mathbb{R})$. If $s\in\mathbb{R}$, then for any $l>|s-1|+1/2$ $$\|[J^s,\phi]f\|_{L^2} + \|[J^{s-1},\phi]\partial_x f\|_{L^2} \leq c\|J^l \phi'\|_{L^2} \|J^{s-1} f\|_{L^2}.$$
\end{lemma}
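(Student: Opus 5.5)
We have to prove Lemma \ref{CommutatorKLPV}: for $\phi$ smooth with $\phi'\in\mathcal{C}_0^\infty(\mathbb{R})$, $s\in\mathbb{R}$ and $l>|s-1|+1/2$,
\begin{equation*}
\|[J^s,\phi]f\|_{L^2}+\|[J^{s-1},\phi]\partial_x f\|_{L^2}\le c\|J^l\phi'\|_{L^2}\|J^{s-1}f\|_{L^2}.
\end{equation*}
Since $\phi$ itself need not be in $L^2$ (it may tend to different constants at $\pm\infty$), we work on the Fourier side with $\widehat{\phi'}(\eta)=i\eta\widehat{\phi}(\eta)$. This makes sense: $\widehat{\phi}$ is a tempered distribution, the constant parts only contribute multiples of $\delta$, and these cancel in the commutator.

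\textbf{Fourier representation.} For the first term,
\begin{equation*}
\widehat{[J^s,\phi]f}(\xi)=\int\big(\langle\xi\rangle^s-\langle\xi-\eta\rangle^s\big)\,\widehat{\phi}(\eta)\,\widehat f(\xi-\eta)\,d\eta
=\int \frac{\langle\xi\rangle^s-\langle\xi-\eta\rangle^s}{i\eta}\,\widehat{\phi'}(\eta)\,\widehat f(\xi-\eta)\,d\eta .
\end{equation*}
For the second term the multiplier becomes $(\langle\xi\rangle^{s-1}-\langle\xi-\eta\rangle^{s-1})\,(\xi-\eta)/\eta$.

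\textbf{Kernel bound.} We want, for both multipliers, a bound by $C\langle\eta\rangle^{|s-1|}\langle\xi-\eta\rangle^{s-1}$.
\begin{itemize}
\item For $|\eta|\le 1$, use the mean value theorem. On the segment between $\xi-\eta$ and $\xi$ we have $\langle\zeta\rangle\sim\langle\xi-\eta\rangle$. Hence $|\langle\xi\rangle^s-\langle\xi-\eta\rangle^s|\lesssim|\eta|\langle\xi-\eta\rangle^{s-1}$, and similarly $|\langle\xi\rangle^{s-1}-\langle\xi-\eta\rangle^{s-1}|\,|\xi-\eta|\lesssim|\eta|\langle\xi-\eta\rangle^{s-1}$.
\item For $|\eta|\ge1$, divide by $|\eta|\gtrsim\langle\eta\rangle$. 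Peetre's inequality gives $\langle\xi\rangle^{s}\lesssim\langle\eta\rangle^{|s|}\langle\xi-\eta\rangle^{s}$. The first multiplier is then bounded by $\langle\eta\rangle^{|s|-1}\langle\xi-\eta\rangle^{s}$.
\end{itemize}

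This crude bound loses a full power of $\langle\xi-\eta\rangle$, so the case $|\eta|\ge1$ needs a finer split. The main obstacle is getting the sharp exponent $|s-1|$, not $|s|$, in this regime.
\begin{itemize}
\item If $|\eta|\le\tfrac12\langle\xi-\eta\rangle$, the mean value theorem still applies, since $\langle\zeta\rangle\sim\langle\xi-\eta\rangle$ on the segment. This gives $|\eta|\langle\xi-\eta\rangle^{s-1}$, and dividing by $|\eta|$ yields $\langle\xi-\eta\rangle^{s-1}$.
\item If $|\eta|\ge\tfrac12\langle\xi-\eta\rangle$, then $\langle\xi\rangle\lesssim\langle\eta\rangle$ and $\langle\xi-\eta\rangle\lesssim\langle\eta\rangle$. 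Each of the two terms $\langle\xi\rangle^s/|\eta|$ and $\langle\xi-\eta\rangle^s/|\eta|$ should then be bounded by $\langle\eta\rangle^{|s-1|}\langle\xi-\eta\rangle^{s-1}$.
\end{itemize}

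For the second term of the splitting, write $\langle\xi-\eta\rangle^s/|\eta|\lesssim\langle\xi-\eta\rangle^{s-1}\langle\xi-\eta\rangle/\langle\eta\rangle\lesssim\langle\xi-\eta\rangle^{s-1}$.

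For the first term, use Peetre in the form $\langle\xi\rangle^{s-1}\lesssim\langle\eta\rangle^{|s-1|}\langle\xi-\eta\rangle^{s-1}$ together with $\langle\xi\rangle/|\eta|\lesssim1$. The second multiplier is handled identically, with $|\xi-\eta|\le\langle\xi-\eta\rangle$ absorbing the extra factor.

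\textbf{Conclusion.} With $F=\langle\cdot\rangle^{s-1}|\widehat f|$, the kernel bound gives
\begin{equation*}
|\widehat{\text{commutator}}(\xi)|\lesssim\int\langle\eta\rangle^{|s-1|}|\widehat{\phi'}(\eta)|\,F(\xi-\eta)\,d\eta .
\end{equation*}
Young's inequality in $L^1*L^2$ and Plancherel then bound the left side of the lemma by $\|\langle\eta\rangle^{|s-1|}\widehat{\phi'}\|_{L^1}\|J^{s-1}f\|_{L^2}$. Finally, Cauchy–Schwarz gives
\begin{equation*}
\|\langle\eta\rangle^{|s-1|}\widehat{\phi'}\|_{L^1}\le\|\langle\eta\rangle^{|s-1|-l}\|_{L^2}\,\|J^l\phi'\|_{L^2},
\end{equation*}
and the first factor is finite exactly when $l>|s-1|+1/2$.

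We carry out the computation for $f$ in the Schwartz class and conclude by density.
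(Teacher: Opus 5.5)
Your proof is correct. The distributional treatment of $\widehat{\phi}$ is legitimate because both multipliers vanish at $\eta=0$. Your case analysis does yield the pointwise bound $\lesssim\langle\eta\rangle^{|s-1|}\langle\xi-\eta\rangle^{s-1}$ for both multipliers, and Young's inequality plus Cauchy--Schwarz then close the argument exactly when $l>|s-1|+1/2$.

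The paper gives no argument of its own for this lemma; it only cites Kenig--Linares--Ponce--Vega. Your Fourier-side multiplier estimate is the standard way to prove this commutator bound.
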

\begin{proof}
    This inequality can be found in \cite{KenigLinaresPonceVega2018}. 
\end{proof}


\section{Kato's smoothing effect: proof of Theorem \ref{SmoothingEffectThrm}}\label{Katosection}

In this section, we deduce a smoothing effect for solutions of \eqref{Benjamineq} similar to that established by Kato \cite{Kato1983} for the KdV equation. Our main tools come from the theory of pseudo-differential operators.

{\bf Equivalent model}. For simplicity, let us rewrite the operators in the equation in \eqref{Benjamineq} in terms of the $J^{l}$ operators. Notice that $\mathcal{H} \partial_x^2 = \partial_x |D|$. Furthermore, using  the symbols of the operators $\partial_x^{2N}$ and $J^{2N}$ in frequency, we have that the following identity holds
\begin{equation*}
(-1)^{N+1}\partial_x^{2N} = - J^{2N} + \sum_{j=0}^{N-1}\binom{N}{j}(-1)^j \partial_x^{2j}.    
\end{equation*}
It follows that the equation in \eqref{Benjamineq} can be rewritten as 
\begin{equation*} 
    \partial_t u + \gamma \partial_x |D| u - \partial_x J^{2N} u + P_1(D) u + \partial_x u +\sum_{k=1}^{M}b_k u^k\partial_x u = 0, \quad x,t \in \mathbb{R},
\end{equation*}
where $P_1(D)=\sum_{k=1}^{N-2} \widetilde{a_k} \partial_x^{2k+1}=i\sum_{k=1}^{N-2}(-1)^k\widetilde{a_k} D^{2k+1}$. We emphasize that if $N\in\{1,2\}$ or $a_k=0$ for every $k\in \{1,\ldots,N-2\}$, then $P_1(D)=0$.

One may repeat this procedure for each one of the derivatives in $P_1(D)$, obtaining the following form of \eqref{Benjamineq}
\begin{equation} \label{Benj2}
    \begin{cases}
    \partial_t u + \gamma \partial_x |D| u - \partial_x J^{2N} u + Q(J) u + \alpha \partial_x u + \sum_{k=1}^{M}b_k u^k\partial_x u = 0, \quad x,t \in \mathbb{R},\\
    u(x,0) = u_0(x),
\end{cases} 
\end{equation}
where we assume:
\begin{itemize}
    \item If $N=1$, then $Q(J) = 0$.
    \item If $N\geq 2$, then $Q(J) = \sum_{j=1}^{N-1} c_j \partial_x J^{2j} = \sum_{j=0}^{N-2} c_{j}' \partial_x J^{2N - 2 - 2j}$, where the $c_j$'s are real numbers and $c_j' = c_{N-1-j}$ for any $j$.
    \item $\gamma,\alpha \in \mathbb{R}$. 
\end{itemize}
The above equation is convenient to exploit the fact that $J^s$ is a pseudo-differential operator for any $s\in \mathbb{R}$.

{\bf Weighted functions}. Let $A$ be a positive real number. To prove the desired smoothing effect, we consider a sequence of smooth functions $(\psi_\ell)_{\ell\geq 1}$ such that $0\leq \psi_{\ell}'\leq 1$,  $\psi_{\ell}' \equiv 1$ in the interval $[-2^{-\ell}A , 2^{-\ell} A]$ and $\supp(\psi_{\ell}') \subseteq [-2^{-(\ell-1/2)}A , 2^{-(\ell-1/2)} A]$ for all $\ell\geq 1$.  Hence, the sequence of compact sets $\{\supp(\psi_{\ell}')\}_{\ell\geq 1}$ is monotone decreasing according to the inclusion relation, and the distance between the supports of $1-\psi_{\ell}'$ and $\psi_{\ell+1}'$ is a positive constant for every $\ell\geq 1$.

{\bf Strategy of the proof}. Using energy estimates, we aim to show that the solutions of \eqref{Benj2} gain locally $N$ derivatives, which will be done in $N$ steps. More precisely, we prove that the solutions of \eqref{Benj2} gain one local derivative in the first step.  In the second step, we use this first step to show that the solution also gains a second local derivative, and so on up to $N$ derivatives. We remark that to take advantage of the previous step, in each phase we work with the support of the weighted function $\psi_{\ell}'$ that is contained in that of $\psi_{\ell-1}'$. This is the reason behind the construction of the particular sequence $(\psi_{\ell})_{\ell\geq 1}$.

\begin{proof}[Proof of Theorem \ref{SmoothingEffectThrm}]

Using continuous dependence and the fact that the lifetime of solutions in Lemmas \ref{LWPN12} and \ref{StandardLWP} decreases with the norm, we will assume that the solution $u$ of \eqref{Benjamineq} is regular enough to justify all the arguments below.  In particular, we work with the equivalent equation in \eqref{Benj2}. To observe this, we consider  $\varphi$ being a nonnegative smooth function with compact support such that $\supp(\varphi)\subseteq (-1,1)$ and $\int_{\mathbb{R}} \varphi(x)dx = 1$. For each $\lambda>0$, we define the functions $\varphi_{\lambda}(x) = \lambda^{-1} \varphi(\lambda^{-1} x)$ and $u_0^{\lambda} = \varphi_\lambda \ast u_0$. It follows that each $u_0^\lambda$ lies in $H^{\infty}(\mathbb{R})$, $\|u^{\lambda}_0 - u_0\|_{H^s} \to 0$ as $\lambda\to 0$, and $\|u_0^{\lambda}\|_{H^s} \leq  \|u_0\|_{H^s}$. Consequently, if $u\in C([0,T];H^s(\mathbb{R}))$,  we have that for all $\lambda>0$,  $u^{\lambda}\in C([0,T];H^{\infty}(\mathbb{R}))$ and by continuous dependence 
\begin{equation*}
\sup_{t\in [0,T]}\|u^{\lambda}(t) - u(t)\|_{H^s}\to 0, \quad \|\partial_x (u^{\lambda})-\partial_x u\|_{L^4_T L^{\infty}_x} \to 0,    
\end{equation*}
as $\lambda\to 0$. As a consequence, we can carry out the estimates below for the functions $u^{\lambda}$ and then take $\lambda \to 0$ to obtain the desired result for $u$. For a more detailed idea, see \cite{IsazaLinaresPonce2015}.

Let $R>0$ be such that
\begin{equation*}
\sup_{t\in [0,T]} \|u(\cdot,t)\|_{H^s(\mathbb{R})} \leq R.    
\end{equation*}
To begin with, we shall prove that 
\begin{equation}\label{Inducteq}
  \int_0^T \int_{\mathbb{R}} (J^{s+\ell} u)^2 \, \psi_{\ell}' \, dx \, dt  \leq c(\|u_0\|_{H^s};T)<\infty  
\end{equation}
for every positive integer $\ell\leq N$, which will be done by induction on $\ell\geq 1$. Applying the operator $J^{s-N+\ell}$ to both sides of equation \eqref{Benj2}, multiplying by $\psi_{\ell} J^{s-N+\ell} u$, and integrating with respect to the $x$-variable, we get
    \begin{equation}\label{energuestimateSm} 
    \begin{aligned}
    &\underbrace{\int_{\mathbb{R}} \psi_{\ell} \, J^{s-N+\ell} u \, \partial_t J^{s-N+\ell} u \, dx}_{A_1} + \underbrace{\gamma \int_{\mathbb{R}} \psi_{\ell} \, J^{s-N+\ell} u \, \partial_x |D| J^{s-N+\ell} u \, dx}_{A_2}\\ &\underbrace{- \int_{\mathbb{R}} \psi_{\ell} \, J^{s-N+\ell} u \, \partial_x J^{s+N+\ell} u\, dx}_{A_3} + \underbrace{\int_{\mathbb{R}} \psi_{\ell} \, J^{s-N+\ell} u \, Q(J) J^{s-N+\ell} u\, dx}_{A_4} \\& + \underbrace{\alpha \int_{\mathbb{R}}\psi_{\ell} \, J^{s-N+\ell} u \, (\partial_x J^{s-N+\ell} u)\, dx}_{A_5}  + \sum_{k=1}^M\underbrace{b_{k}\int_{\mathbb{R}} \psi_{\ell} \, J^{s-N+\ell} u \, J^{s-N+\ell} (u^{k}\partial_x u)\, dx}_{A_{6k}} = 0.
    \end{aligned}
    \end{equation}
We start with the base step, i.e., we estimate each of the terms in \eqref{energuestimateSm} for $\ell=1$.
\\ \\
\underline{\bf Case $\ell=1$ in \eqref{energuestimateSm}}. Notice that 
    \begin{equation}\label{A1}
        A_1 = \frac{1}{2} \int_{\mathbb{R}} \psi_1 \, \partial_t (J^{s-N+1} u)^2 \, dx= \frac{1}{2} \frac{d}{dt} \int_{\mathbb{R}} (J^{s-N+1} u)^2 \, \psi_1 \, dx.
    \end{equation}
Setting $K:=|D| - J$, we write
    \begin{equation*}
        \begin{aligned}
            A_2 &= \gamma \int_{\mathbb{R}} \psi_1 \, J^{s-N+1} u \, \partial_x K J^{s-N+1} u \, dx + \gamma \int_{\mathbb{R}} \psi_1 \, J^{s-N+1} u \, \partial_x J^{s-N+2} u \, dx\\
            &=: A_{21} + A_{22}.
        \end{aligned}
    \end{equation*}
 Applying the Cauchy-Schwarz inequality and \eqref{estimateJD} in Proposition \ref{ExpDs-Js}, we obtain
    \begin{equation} \label{A21}
        |A_{21}| \lesssim \|\psi_1 J^{s-N+1} u\|_{L^2} \|\partial_x K J^{s-N+1} u\|_{L^2} \lesssim \|J^{s-N+1} u\|_{L^2}^2 \lesssim R^2.
    \end{equation}
To estimate $A_{2,2}$, let us consider two cases $N=1$ and $N\geq 2$.

\underline{Assume that $N=1$}. Using that $J$ is a self-adjoint operator, we integrate by parts to get 
\begin{equation*}
\begin{aligned}
A_{22}=&\gamma\int_{\mathbb{R}} \psi_1 J^{s} u \partial_x J^{s+1} u \, dx\\
=& \gamma \int_{\mathbb{R}} [J,\psi_1] J^s  u  \partial_x J^{s}u\, dx+\gamma \int_{\mathbb{R}} \psi_1 J^{s+1}  u \partial_x J^{s}u\, dx \\
=& \gamma \int_{\mathbb{R}} [J,\psi_1] J^s  u  \partial_x J^{s}u\, dx-\gamma \int_{\mathbb{R}} \psi_1' J^{s+1}  u J^{s}u\, dx-A_{22},
\end{aligned}    
\end{equation*}
which yields
\begin{equation*}
\begin{aligned}
A_{22}=& \frac{\gamma}{2} \int_{\mathbb{R}} [J,\psi_1] J^s  u  \partial_x J^{s}u\, dx-\frac{\gamma}{2} \int_{\mathbb{R}} \psi_1' J^{s+1}  u J^{s}u\, dx\\
=:&A_{221}+A_{222}.
\end{aligned}    
\end{equation*}
Using Lemma \ref{LemmaAllpseudo} (v) we have
\begin{equation*}
\begin{aligned}
\left[J,\psi_1\right]=-\psi_1'\partial_x J^{-1} + \Phi_{-1},  
\end{aligned}    
\end{equation*}
where $\Phi_{-1}\in \mathrm{OP}\mathcal{S}^{-1}$. Then, integration by parts and the fact that $\partial_x^2=1-J^2$ yield
\begin{equation*}
 \begin{aligned}
A_{221}=&-\frac{\gamma}{2}\int_{\mathbb{R}} \psi'_1 \partial_x J^{s-1} u \partial_x J^s u\, dx +\frac{\gamma}{2}\int_{\mathbb{R}}  \Phi_{-1} J^s  u  \partial_x J^{s}u\, dx \\
=&\frac{\gamma}{2}\int_{\mathbb{R}} \psi''_1 \partial_x J^{s-1} u J^s u\, dx+\frac{\gamma}{2}\int_{\mathbb{R}} \psi'_1 \partial_x^2 J^{s-1} u J^s u\, dx -\frac{\gamma}{2}\int_{\mathbb{R}}  \partial_x \Phi_{-1} J^s  u  J^{s}u\, dx\\
=&\frac{\gamma}{2}\int_{\mathbb{R}} \psi''_1 \partial_x J^{s-1} u J^s u\, dx+\frac{\gamma}{2}\int_{\mathbb{R}} \psi'_1  J^{s-1} u J^s u\, dx-\frac{\gamma}{2}\int_{\mathbb{R}} \psi'_1 J^{s+1} u J^s u\, dx \\
&-\frac{\gamma}{2}\int_{\mathbb{R}}  \partial_x \Phi_{-1} J^s  u  J^{s}u\, dx\\
=:& \sum_{j=1}^4 A_{221j}.
 \end{aligned}   
\end{equation*}
Since $\partial_x \Phi_{-1}\in \mathrm{OP}\mathcal{S}^{0}$, Lemma \ref{LemmaAllpseudo} (iii) shows
\begin{equation*}
\begin{aligned}
\Big|\sum_{\substack{1\leq j \leq 4\\ j\neq 3}} A_{221j}\Big|\lesssim \|J^s u\|_{L^2}^2\lesssim R^2.    
\end{aligned}    
\end{equation*}
Using Cauchy-Schwarz inequality and Young's inequality with $\epsilon>0$, we get
\begin{equation*}
 \begin{aligned}
|A_{2213}|\lesssim \|J^s u\|_{L^2} \|(\psi_1')^{\frac{1}{2}}J^{s+1} u\|_{L^2}  \leq c_{\epsilon}R^2+\epsilon \|(\psi_1')^{\frac{1}{2}}J^{s+1} u\|_{L^2}^2,  
 \end{aligned}   
\end{equation*}
for some constant $c_{\epsilon}>0$. The idea is to choose $\epsilon>0$ small enough to absorb the last term on the right side of the previous inequality by the smoothing effect provided by term $A_3$, which corresponds to the higher-order dispersion. The estimate for $A_{222}$ is similar to that of $A_{2213}$. Summarizing, we deduce that for any $\epsilon>0$,
\begin{equation}\label{N1caseeq}
    |A_{22}|\leq c_{\epsilon} R^2+\epsilon \|(\psi_1')^{\frac{1}{2}}J^{s+1} u\|_{L^2}^2,
\end{equation}
for some $c_{\epsilon}>0$. 

\underline{Assume that $N\geq 2$}. In this case, the estimate of $A_{22}$ is simpler than the one made in the case $N=1$. By the integration by parts formula,
    \begin{equation} \label{A22}
        \begin{aligned}
            |A_{22}| &= \left|- \gamma \int_{\mathbb{R}} \psi_1' \, J^{s-N+1} u \, J^{s-N+2} u \, dx - \gamma \int_{\mathbb{R}} \psi_1 \, (\partial_x J^{s-N+1} u) \, J^{s-N+2} u \, dx \right| \\
            &\lesssim \|\psi_1' \, J^{s-N+1} u\|_{L^2} \|J^{s-N+2} u\|_{L^2} + \|\psi_1 \, \partial_x J^{s-N+1} u\|_{L^2} \|J^{s-N+2} u\|_{L^2}\\
            &\lesssim R^2.
        \end{aligned}
    \end{equation}
From \eqref{A21}, \eqref{N1caseeq} and \eqref{A22}, it follows that for each $\epsilon>0$, there exists $c_{\epsilon}>0$ such that 
\begin{equation} \label{A2}
|A_2| \leq c_{\epsilon}R^2+\epsilon \|(\psi_1')^{\frac{1}{2}}J^{s+1} u\|_{L^2}^2.
    \end{equation}
Now, we deal with $A_3$. Since $J^N$ is a self-adjoint operator, after applying integration by parts, we have 
    \begin{equation} \label{A3in}
        \begin{aligned}
            A_3 &= - \int_{\mathbb{R}} J^N(\psi_1 J^{s-N+1} u) \, \partial_x J^{s+1}u \, dx\\
            &= - \int_{\mathbb{R}} ([J^N,\psi_1] J^{s-N+1} u)\, \partial_x J^{s+1}u \, dx - \int_{\mathbb{R}} \psi_1\, J^{s+1} u \, \partial_x J^{s+1}u \, dx \\
            &= - \int_{\mathbb{R}} ([J^N,\psi_1] J^{s-N+1} u)\, \partial_x J^{s+1}u \, dx + \frac{1}{2} \int_{\mathbb{R}} (J^{s+1} u)^2 \psi_1' dx\\
            &=: A_{31} + \frac{1}{2} \int_{\mathbb{R}} (J^{s+1} u)^2 \psi_1' dx.
        \end{aligned}
    \end{equation}
Note that the last term on the right-hand side of the above identity is a smoothing effect provided by the dispersion of higher-order. By Lemma \ref{LemmaAllpseudo} (iv), there exists a pseudo-differential operator $\Psi_{N-3}$ of order $N-3$ such that 
    \begin{equation} \label{JN exp}
        [J^N,\psi_1] = - N \psi_1' \partial_x J^{N-2} - \frac{N(N-1)}{2} \psi_1'' J^{N-2} + \frac{N(N-2)}{2}\psi_1'' J^{N-4} + \Psi_{N-3}.
    \end{equation}
    Thus, 
    \begin{equation} \label{A31}
        \begin{aligned}
            A_{31} =& N \int_{\mathbb{R}} \psi_1' \, (\partial_x J^{s-1}u) \, \partial_x J^{s+1}u \, dx + \frac{N(N-1)}{2} \int_{\mathbb{R}} \psi_1'' \, J^{s-1}u \, \partial_x J^{s+1}u \, dx\\
            &- \frac{N(N-2)}{2}\int_{\mathbb{R}} \psi_1'' \, J^{s-3}u \, \partial_x J^{s+1}u \, dx - \int_{\mathbb{R}} (\Psi_{N-3} J^{s-N+1}u)\, \partial_x J^{s+1}u \, dx\\
            =:& A_{311} + A_{312} + A_{313} + A_{314}.
        \end{aligned}
    \end{equation}
    Applying integration by parts and the identity $J^2 = 1 - \partial_x^2$, we get
    \begin{equation} \label{A311}
        \begin{aligned}
            A_{311} =& -N \int_{\mathbb{R}} \psi_1'' \, (\partial_x J^{s-1}u) \, J^{s+1}u \, dx - N \int_{\mathbb{R}} \psi_1' \, (\partial_x^2 J^{s-1}u) \, J^{s+1}u \, dx\\
            =& - N \int_{\mathbb{R}} ([J,\psi_1''] \, \partial_x J^{s-1}u) \, J^{s}u \, dx - N \int_{\mathbb{R}} \psi_1'' \, (\partial_x J^{s}u) \, J^{s}u \, dx\\ &- N \int_{\mathbb{R}} \psi_1' \, (J^{s-1}u) \, J^{s+1}u \, dx + N \int_{\mathbb{R}} \psi_1' \, (J^{s+1}u)^2 \, dx\\
            =:& A_{3111} + A_{3112} + A_{3113} + N \int_{\mathbb{R}} (J^{s+1}u)^2 \, \psi_1' \, dx.
        \end{aligned}
    \end{equation}
    Once again, we observe a smoothing effect from the above identity for $A_{311}$. By Lemma \ref{LemmaAllpseudo}, we have that $[J,\psi'']\partial_x J^{-1} \in \mathrm{OP}\mathcal{S}^{0}$, and it determines a bounded operator on $L^2(\mathbb{R})$. Therefore,
    \begin{equation} \label{A3111}
        |A_{3111}| \lesssim \|[J,\psi_1''] \, \partial_x J^{-1} J^{s}u\|_{L^2} \|J^s u\|_{L^2} \lesssim  \|J^s u\|_{L^2}^2 \leq R^2.
    \end{equation}
    By using the commutator $[J,\psi_1']$ and arguing as before, we obtain 
    \begin{equation} \label{A3113}
        |A_{3113}| \lesssim R^2.
    \end{equation}
    Observe that $A_{3112} = \frac{N}{2} \int_{\mathbb{R}} \psi_1'''\,(J^s u)^2 \, dx,$ thus
    \begin{equation} \label{A3112}
        |A_{3112}| \lesssim R^2.
    \end{equation}
    This completes the estimate of $A_{311}$.
    
    Given that $\partial_x J$ is a skew-adjoint operator, we have
    \begin{equation*} 
        \begin{aligned}
            A_{312} =& - \frac{N(N-1)}{2} \int_{\mathbb{R}} \partial_x J (\psi_1'' J^{s-1}u) \, J^s u \, dx\\
            =& - \frac{N(N-1)}{2} \int_{\mathbb{R}} ( [\partial_x J,\psi_1''] J^{s-1}u) \, J^s u \, dx - \frac{N(N-1)}{2} \int_{\mathbb{R}} \psi_1'' \, (\partial_x J^{s}u) \, J^s u \, dx\\
            =& - \frac{N(N-1)}{2} \int_{\mathbb{R}} ( [\partial_x J,\psi_1''] J^{s-1}u) \, J^s u \, dx + \frac{N(N-1)}{4} \int_{\mathbb{R}} \psi_1''' \, (J^{s}u)^2\, dx.
        \end{aligned}
    \end{equation*}
    Since $[\partial_x J,\psi_1'']J^{-1}$ has order $0$, we can use the Cauchy-Schwarz inequality to estimate the first term on the right-hand side of the above equation by a constant times $R^2$. While the second term clearly admits the same kind of bound. We obtain
    \begin{equation} \label{A312}
        |A_{312}| \lesssim R^2.
    \end{equation}
    Decomposing the term $A_{313}$ by means of the commutator $[\partial_x J,\psi_1'']$, and using the same previous ideas, we get 
    \begin{equation} \label{A313}
        |A_{313}| \lesssim R^2.
    \end{equation}
  Given that  $\partial_x J$ is skew-adjoint, and that $\partial_x J \Psi_{N-3} J^{-N+1} \in \mathrm{OP}\mathcal{S}^{0}$, we deduce
    \begin{equation}\label{A314} 
        |A_{314}| =\Big| \int_{\mathbb{R}} (\partial_x J \Psi_{N-3} J^{-N+1} J^s u) \, J^s u \, dx\Big|\lesssim R^2.
    \end{equation}
    Collecting \eqref{A3in}-\eqref{A314}, we conclude that 
    \begin{equation}\label{A3}
        A_3 = \left( N + \frac{1}{2} \right)\int_{\mathbb{R}} (J^{s+1} u)^2 \, \psi_1' \, dx + \widetilde{A_3},
    \end{equation}
    where $|\widetilde{A_3}| \lesssim R^2$.

    For $A_4$, notice that 
    \begin{equation} \label{A4in}
        \begin{aligned}
            A_4 = \sum_{j=0}^{N-2} c_j' \int_{\mathbb{R}} \psi_1 \, J^{s-N+1} u \, \partial_x J^{s+N-1-2j} u\, dx=: \sum_{j=0}^{N-2} A_{4j}.
        \end{aligned}
    \end{equation}
    For any $j\in \{0,\ldots, N-2\}$, we have 
    \begin{equation} \label{A4dec}
        \begin{aligned}
            A_{4j} &= -c_j' \int_{\mathbb{R}} ([\partial_x J^{N-1}, \psi_1] J^{s-N+1} u) \, J^{s-2j} u\, dx - c_j' \int_{\mathbb{R}} \psi_1 \, (\partial_x J^{s} u) \, J^{s-2j} u\, dx\\
            &=: A_{4j1} + A_{4j2}.
        \end{aligned}
    \end{equation}
    Since $[\partial_x J^{N-1}, \psi_1] J^{-N+1}$ is a pseudo-differential operator of order $0$, an application of the Cauchy-Schwarz inequality gives us 
    \begin{equation} \label{A4I}
        |A_{4j1}| \lesssim R^2,
    \end{equation}
    for each $j\in \{0,\ldots, N-2\}$. Observe that 
    \begin{equation*}
        A_{402}= -c_0' \int_{\mathbb{R}} \psi_1 \, (\partial_x J^{s} u) \, J^{s} u\, dx = \frac{c_0'}{2} \int_{\mathbb{R}} \psi_1' \,(J^s u)^2 \,dx,
    \end{equation*}
    which implies
    \begin{equation} \label{A4II}
        |A_{402}| \lesssim R^2.
    \end{equation}
    If $0<j\leq N-2$, then
    \begin{equation*}
        \begin{aligned}
            A_{4j2} = c_j' \int_{\mathbb{R}} \psi_1' \, (J^s u) \, J^{s-2j} u \, dx + c_j' \int_{\mathbb{R}} \psi_1 \, J^s u \, (\partial_x J^{s-2j} u) \, dx.
        \end{aligned}
    \end{equation*}
    Since $j>0$, the operator $\partial_x J^{-2j}$ has order $0$, we obtain
    \begin{equation} \label{A4III}
        |A_{4j2}| \lesssim R^2,
    \end{equation} 
    for every $j\in \{1,\ldots, N-2\}$. From \eqref{A4in}-\eqref{A4III}, we arrive at 
    \begin{equation} \label{A4}
        |A_4| \lesssim R^2.
    \end{equation}
    Observe that
    \begin{equation*}
        A_5 = \frac{\alpha}{2} \int_{\mathbb{R}} \psi_1 \, \partial_x (J^{s-N+1} u)^2 \,dx = - \frac{\alpha}{2} \int_{\mathbb{R}} \psi_1' \, (J^{s-N+1} u)^2 \,dx,
    \end{equation*}
    which implies 
    \begin{equation}\label{A5}
        |A_5| \lesssim \int_{\mathbb{R}}  (J^{s-N+1} u)^2 \,dx = \|J^{s-N+1} u\|_{L^2}^2 \leq \|J^s u\|_{L^2}^2 \leq R^2.
    \end{equation}    
For the nonlinear term, when $N=1$, we use integration by parts to write
\begin{equation}\label{A6N1firstdecom}
\begin{aligned}
A_{6k}&=b_k\int_{\mathbb{R}} \psi_1 J^s u\, [J^s,u^k]\partial_x u\, dx+b_k\int_{\mathbb{R}} \psi_1 u^k J^s u \partial_x J^s u\, dx\\
&=b_k\int_{\mathbb{R}} \psi_1 J^s u\, [J^s,u^k]\partial_x u\, dx-\frac{b_k}{2}\int_{\mathbb{R}} \psi_1' u^k  \big(J^s u\big)^2\, dx-\frac{b_k}{2}\int_{\mathbb{R}} \psi_1 \partial_x(u^k)  \big(J^s u\big)^2\, dx\\
&=:A_{6k1}+A_{6k2}+A_{6k3}.
\end{aligned}
\end{equation}  
By Kato-Ponce inequality Lemma \ref{Kato-Ponce}, we deduce
\begin{equation}\label{A6N1firstdecom2}
\begin{aligned}
|A_{6k1}| \lesssim & \|\psi_1\|_{L^{\infty}}\|J^s u\|_{L^2}\|[J^s,u^k]\partial_x u\|_{L^2} \\
\lesssim & \|u\|_{L^{\infty}}^{k-1}\|\partial_x u\|_{L^{\infty}}\|J^s u\|_{L^2}^2+\|\partial_x u\|_{L^{\infty}}\|J^s(u^k)\|_{L^2}\|J^s u\|_{L^2}\\
\lesssim & \|u\|_{L^{\infty}}^{k-1}\|\partial_x u\|_{L^{\infty}}\|J^s u\|_{L^2}^2,
\end{aligned}   
\end{equation}
where we have used Lemma \ref{Grafakos-Oh} to control $\|J^s(u^k)\|_{L^2}$. Since $s>\frac{1}{2}^{+}$, Sobolev embedding $\|u\|_{L^{\infty}}\lesssim \|u\|_{H^s}$ allows us to conclude 
\begin{equation}\label{A6N1}
\begin{aligned}
|A_{6k1}| \lesssim & \|\partial_x u\|_{L^{\infty}}R^{k+1}.
\end{aligned}   
\end{equation}
We get a similar estimate for $A_{6k3}$. We also have $|A_{6k2}|\lesssim R^{k+2}$. In conclusion
\begin{equation*}
 |A_{6k}|\lesssim  R^{k+2}+\|\partial_x u\|_{L^{\infty}}R^{k+1},
\end{equation*}
for all $k\in\{1,\dots, M\}$. Next, we assume that $N\geq 2$. Using that $H^s(\mathbb{R}^d)$ is a Banach algebra and that $s-N+2\leq s$, we deduce 
    \begin{equation} \label{A6N2}
        \begin{aligned}
            |A_{6k}| &= \left|\frac{b_k}{k+1} \int_{\mathbb{R}} \psi_1 \, J^{s-N+1} u \, J^{s-N+1} \partial_x(u^{k+1}) \, dx\right|\\
            &\lesssim \|J^{s-N+1} u\|_{L^2} \|\partial_x J^{-1}(J^{s-N+2} (u^{k+1}))\|_{L^2}\\
            &\lesssim \|J^{s-N+1} u\|_{L^2} \|J^{s-N+2} (u^{k+1})\|_{L^2}\\
            &\lesssim \|J^s u\|^{k+2}\lesssim R^{k+2}.
        \end{aligned}
    \end{equation}    
Gathering \eqref{A1}, \eqref{A2}, \eqref{A3}, \eqref{A4}, \eqref{A5}, \eqref{A6N1} and \eqref{A6N2}, we have for all $\epsilon>0$ that
\begin{equation*}
\frac{1}{2} \frac{d}{dt} \int_{\mathbb{R}} (J^{s-N+1} u)^2 \, \psi_1 \, dx + \left( N + \frac{1}{2} -\epsilon \right)\int_{\mathbb{R}} (J^{s+1} u)^2 \, \psi_1' \, dx \lesssim  \sum_{k=0}^M\big(R+\|\partial_x u\|_{L^{\infty}}\big) R^{k+1},
\end{equation*}
where the implicit constant above depends on $\epsilon>0$. Thus, taking $0<\epsilon<N+\frac{1}{2}$, and integrating the inequality above from $0$ to $T$, we deduce
    \begin{equation*}
        \begin{aligned}
            \frac{1}{2} \int_{\mathbb{R}} (J^{s-N+1} u(\cdot,T))^2 \, \psi_1 \, dx &+ \left( N + \frac{1}{2} -\epsilon \right)\int_0^T \int_{\mathbb{R}} (J^{s+1} u)^2 \, \psi_1' \, dx \, dt \\ &\lesssim \frac{1}{2} \int_{\mathbb{R}} (J^{s-N+1} u_0)^2 \, \psi_1 \, dx + \sum_{k=0}^M\big(RT+\|\partial_x u\|_{L^1_T L^{\infty}_x}\big) R^{k+1}.
        \end{aligned}
    \end{equation*}
Consequently, the above inequality implies that \eqref{Inducteq} holds for $\ell=1$. This is the initial step in the inductive argument. 

Next, we proceed with the inductive step. Let $\ell$ be a positive integer such that $1<\ell\leq N$, and assume that the \eqref{Inducteq} holds for every positive integer $r$ such that $r<\ell$. Our goal is to prove  \eqref{Inducteq} for $\ell$. Consequently, we estimate \eqref{energuestimateSm}.

\underline{\bf Case $1<\ell\leq N$ in \eqref{energuestimateSm}}. We have
    \begin{equation} \label{B1}
        A_1 = \frac{1}{2} \frac{d}{dt} \int_{\mathbb{R}}  \psi_{\ell} \, (J^{s-N+\ell} u)^2 \, dx.
    \end{equation}
    For $A_2$, setting $K:|D|-J$, we get
    \begin{equation*}
    \begin{aligned}
        A_2 &= \gamma \int_{\mathbb{R}} \psi_{\ell} \, J^{s-N+\ell} u \, \partial_x K J^{s-N+\ell} u \, dx + \gamma \int_{\mathbb{R}} \psi_{\ell} \, J^{s-N+\ell} u \, \partial_x J^{s-N+\ell+1} u \, dx\\
            &=:A_{21} + A_{22}.
        \end{aligned}
    \end{equation*}
A similar argument to that in \eqref{A21} shows $|A_{21}| \lesssim R^2$. To estimate $A_{22}$, we have to make a distinction between two cases: $\ell \leq N-1$ and $\ell=N$. If $\ell<N$, we use the same idea from \eqref{A22} to obtain that $|A_{22}| \lesssim R^2$. If $\ell=N$, we write
\begin{equation} \label{A221}
\begin{aligned}
A_{22} &=  \gamma \int_{\mathbb{R}} ([J^{1/2},\psi_{\ell}] J^{s}u) \, \partial_x J^{s+1/2}u \, dx +  \gamma \int_{\mathbb{R}} \psi_{\ell} \, J^{s+1/2}u \, \partial_x J^{s+1/2}u \, dx\\
&= \gamma \int_{\mathbb{R}} ([J^{1/2},\psi_{\ell}] J^{s}u) \, \partial_x J^{s+1/2}u \, dx - \frac{\gamma}{2} \int_{\mathbb{R}} \psi_{\ell}' \, (J^{s+1/2}u)^2 \, dx\\
&=: A_{221} - \frac{\gamma}{2} \int_{\mathbb{R}} \psi_{\ell}' \, (J^{s+1/2}u)^2 \, dx.
\end{aligned}
\end{equation}
By Lemma \ref{LemmaAllpseudo}, the commutator $[J^{1/2},\psi_{\ell}]$ may be decomposed as
    \begin{equation*}
        [J^{1/2},\psi_{\ell}] = -\frac{1}{2} \, \psi_{\ell}' \, \partial_x J^{-3/2} + \Psi_{-3/2}
    \end{equation*}
    for some pseudo-differential operator $\Psi_{-3/2}$ of order $-3/2$. Then, using that $J^2 = 1 - \partial_x^2$,
    \begin{equation*}
        \begin{aligned}
            A_{221} =& - \frac{\gamma}{2} \int_{\mathbb{R}}\psi_{\ell}' \, (\partial_x J^{s-3/2}u) \, \partial_x J^{s+1/2}u \, dx + \gamma \int_{\mathbb{R}}(\Psi_{-3/2} J^{s}u) \, \partial_x J^{s+1/2}u \, dx\\
            =&  \frac{\gamma}{2} \int_{\mathbb{R}}\psi_{\ell}'' \, (\partial_x J^{s-3/2}u) \, J^{s+1/2}u \, dx +  \frac{\gamma}{2} \int_{\mathbb{R}}\psi_{\ell}' \, (\partial_x^2 J^{s-3/2}u) \, J^{s+1/2}u \, dx\\
            &- \gamma \int_{\mathbb{R}}(\partial_x J^{1/2} \Psi_{-3/2} J^{s}u) \, J^{s}u \, dx\\
            =&\frac{\gamma}{2} \int_{\mathbb{R}}\psi_{\ell}'' \, (\partial_x J^{s-3/2}u) \, J^{s+1/2}u \, dx + \frac{\gamma}{2} \int_{\mathbb{R}}\psi_{\ell}' \, ( J^{s-3/2}u) \, J^{s+1/2}u \, dx\\ &- \frac{\gamma}{2} \int_{\mathbb{R}}\psi_{\ell}' \, (J^{s+1/2}u)^2 \, dx - \gamma \int_{\mathbb{R}}(\partial_x J^{1/2} \Psi_{-3/2} J^{s}u) \, J^{s}u \, dx\\
            =:& A_{2211} + A_{2212} - \frac{\gamma}{2} \int_{\mathbb{R}}\psi_{\ell}' \, (J^{s+1/2}u)^2 \, dx + A_{2213}.
        \end{aligned}
    \end{equation*}
By using the commutators $[J^{1/2},\psi_{\ell}'']$ and $[J^{1/2},\psi_{\ell}']$, we may bound the terms $A_{2211}$ and $A_{2212}$ by constant multiple of $R^2$. We omit these calculations to avoid repetitions. Moreover, given that $\partial_x J^{1/2} \Psi_{-3/2}$ is a pseudo-differential operator of order $0$, we may also apply the Cauchy-Schwarz inequality to bound the term $A_{2213}$ by a constant multiple of $R^2$. Summarizing, 
    \begin{equation*}
        A_{22} = - \gamma  \int_{\mathbb{R}}\psi_{\ell}' \, (J^{s+1/2}u)^2 \, dx + \widetilde{A_{22}},
    \end{equation*}
where $|\widetilde{A_{22}}|\lesssim R^2$. Observe that
\begin{equation*}
\begin{aligned}
\int_{\mathbb{R}}\psi_{\ell}' \, (J^{s+1/2}u)^2 \, dx = \int_{\mathbb{R}}([J^{1/2},\psi_{\ell}'] J^{s+1/2}u) \,J^{s}u\, dx + \int_{\mathbb{R}}\psi_{\ell}' \, (J^{s+1}u) \,J^{s}u\, dx.
\end{aligned}
\end{equation*}
On one hand, since $[J^{1/2},\psi_{\ell}']J^{1/2} \in \mathrm{OP}\mathcal{S}^0$, the first term on the right-hand side of the above equation is bounded by a constant multiple of $R^2$. On the other hand, given that $\sqrt{\psi_1'}\Big|_{\supp(\psi_{\ell}')} \equiv 1$, we apply the Cauchy-Schwarz and Young's inequalities to obtain
    \begin{equation*}
        \begin{aligned}
            \left|\int_{\mathbb{R}}\psi_{\ell}' \, (J^{s+1}u) \,J^{s}u\, dx\right| &= \left|\int_{\mathbb{R}}\psi_{\ell}' \sqrt{\psi_1'}\, (J^{s+1}u) \,J^{s}u\, dx\right|\\
            &\lesssim \left\| \sqrt{\psi_1'}\,J^{s+1}u \right\|_{L^2} \|\psi_{\ell}'J^{s}u\|_{L^2}\\
            &\lesssim R^2 + \left\|\sqrt{\psi_1'}\, J^{s+1}u \right\|_{L^2}^2.
        \end{aligned}
    \end{equation*}
Gathering the previous estimates, we arrive at
\begin{equation} \label{B2}
|A_{2}| \lesssim R^2+\left\|  \sqrt{\psi_1'} J^{s+1}u \right\|_{L^2}^2.
\end{equation}
Next, we study $A_3$. We write
\begin{equation*}
\begin{aligned}
A_3 &= - \int_{\mathbb{R}} ([J^N,\psi_{\ell}]J^{s-N+\ell} u) \, \partial_x J^{s+\ell}u \, dx + \frac{1}{2} \int_{\mathbb{R}} \psi_{\ell}' \, (J^{s+\ell} u)^2\, dx\\
&=: A_{31} + \frac{1}{2}\int_{\mathbb{R}}  (J^{s+\ell} u)^2\, \psi_{\ell}' \, dx.
\end{aligned}
\end{equation*}
The idea now is to study the expansion of the commutator $[J^N,\psi_{\ell}]$. By Lemma \ref{LemmaAllpseudo}, we have
\begin{equation*}
\begin{aligned}
\left[J^N,\psi_{\ell}\right] =& - N \psi_{\ell}' \partial_x J^{N-2} - \frac{N(N-1)}{2} \psi_{\ell}'' J^{N-2} + \frac{N(N-2)}{2}\psi_{\ell}'' J^{N-4} \\&+ \sum_{j=3}^{2\ell} c_j \psi_{\ell}^{(j)} \, \Phi_j \, J^{N-j} + K_{N-2\ell-1},
\end{aligned}
\end{equation*}
where each $c_j$ is a real constant, each $\Phi_j \in  \mathrm{OP}\mathcal{S}^0$, $K_{N-2\ell-1}\in  \mathrm{OP}\mathcal{S}^{N-2\ell-1}$, and $\psi_{\ell}^{(j)}$ denotes the $j$-th derivative of the function $\psi_{\ell}$. Thus, 
\begin{equation} \label{B31in}
\begin{aligned}
A_{31} =& N \int_{\mathbb{R}}  \psi_{\ell}' \, (\partial_x J^{s+\ell-2} u) \, \partial_x J^{s+\ell}u\, dx + \frac{N(N-1)}{2}\int_{\mathbb{R}}  \psi_{\ell}'' \, (J^{s+\ell-2} u) \, \partial_x J^{s+\ell}u\, dx\\
&- \frac{N(N-2)}{2}\int_{\mathbb{R}}  \psi_{\ell}'' \, (J^{s+\ell-4} u) \, \partial_x J^{s+\ell}u\, dx\\ &- \sum_{j=3}^{2\ell} c_j\int_{\mathbb{R}}  \psi_{\ell}^{(j)} \, (\Phi_j J^{s+\ell-j} u) \, \partial_x J^{s+\ell}u\, dx - \int_{\mathbb{R}} (K_{N-2\ell-1} J^{s-N+\ell} u) \, \partial_x J^{s+\ell}u\, dx\\
=:&A_{311} + A_{312,1} + A_{312,2} + \sum_{j=3}^{2\ell} A_{31,j} + A_{31,2\ell+1}.
\end{aligned}
\end{equation}
Integration by parts reveals
\begin{equation*}
\begin{aligned}
A_{311}=& - N \int_{\mathbb{R}}  \psi_{\ell}'' \, (\partial_x J^{s+\ell-2} u) \, J^{s+\ell}u\, dx - N \int_{\mathbb{R}}  \psi_{\ell}' \, (J^{s+\ell-2} u) \, J^{s+\ell}u\, dx \\
&+ N \int_{\mathbb{R}}  \psi_{\ell}' \, (J^{s+\ell} u)^2\, dx\\
=:& A_{3111} + A_{3112} + N \int_{\mathbb{R}}   (J^{s+\ell} u)^2\, \psi_{\ell}' \, dx.
\end{aligned}
    \end{equation*}
To estimate $A_{3111}$, we need to expand the commutator $[J,\psi_\ell'']$. We write
\begin{equation*}
\begin{aligned}
\left[J,\psi_{\ell}''\right] = \sum_{r=1}^{2\ell-2} a_r \psi_{\ell}^{(r+2)} \, \Lambda_r J^{1-r} + K_{-2{\ell}+2},
\end{aligned}
\end{equation*}
where each $a_r$ is a real constant, each $\Lambda_r \in \mathrm{OP}\mathcal{S}^0$, and $K_{-2\ell+2}$ is a pseudo-differential operator of order $-2\ell+2$. It follows  
\begin{equation} \label{B3111in}
\begin{aligned}
A_{3111} =& - N \int_{\mathbb{R}} ([J,\psi_{\ell}''] \partial_x J^{s+\ell-2} u) \, J^{s+\ell-1}u\, dx - N \int_{\mathbb{R}}  \psi_{\ell}'' \, (\partial_x J^{s+\ell-1} u) \, J^{s+\ell-1} u\, dx\\
=& - \sum_{r=1}^{2\ell-2} N a_r \int_{\mathbb{R}} \psi_{\ell}^{(r+2)}\, (\Lambda_r J^{1-r} \partial_x J^{-1} J^{s+\ell-1} u) \, J^{s+\ell-1} u\, dx  \\ &- N \int_{\mathbb{R}} (K_{-2\ell+2} \partial_x J^{-1} J^{\ell-1} J^s u) \, J^{s+\ell-1} u \, dx + \frac{N}{2} \int_{\mathbb{R}}  \psi_{\ell}''' \, (J^{s+\ell-1} u)^2 \, dx\\
=& \sum_{r=1}^{2\ell-2} A_{3111,r} + A_{3111,2\ell-1} + A_{3111,2\ell}.
\end{aligned}
\end{equation}
Given that $\psi_{\ell-1}'\Big|_{\supp(\psi_{\ell}^{(r+2)})} \equiv 1$, we get $\psi_{\ell}^{(r+2)}=\psi_{\ell-1}'\psi_{\ell}^{(r+2)}$. Then
\begin{equation} \label{SuppSep}
\begin{aligned}
A_{3111,r} =&  -N a_r \int_{\mathbb{R}} \psi_{\ell}^{(r+2)}\, \psi_{\ell-1}'\,(\Lambda_r J^{1-r} \partial_x J^{-1} \psi_{\ell-1}' J^{s+\ell-1} u) \, J^{s+\ell-1} u\, dx\\
&- N a_r \int_{\mathbb{R}} \psi_{\ell}^{(r+2)}\, \psi_{\ell-1}' \, (\Lambda_r J^{1-r} \partial_x J^{-1} (1-\psi_{\ell-1}') J^{s+\ell-1} u) \, J^{s+\ell-1} u\, dx\\
=:& A_{3111,r}^{(1)} + A_{3111,r}^{(2)}.
\end{aligned}
\end{equation}
Since $\Lambda_r J^{1-r} \partial_x J^{-1}$ has order $0$ for each $r\geq 1$, we obtain
\begin{equation} \label{B3111I}
\begin{aligned}
|A_{3111,r}^{(1)}| &\lesssim \|\Lambda_r J^{1-r} \partial_x J^{-1} (\psi_{\ell-1}' J^{s+\ell-1} u)\|_{L^2} \left\| \sqrt{\psi_{\ell-1}'} \, J^{s+\ell-1} u \right\|_{L^2}\\
&\lesssim \|\psi_{\ell-1}' J^{s+\ell-1} u\|_{L^2} \left\| \sqrt{\psi_{\ell-1}'} \, J^{s+\ell-1} u  \right\|_{L^2}\\
&\lesssim \left\| \sqrt{\psi_{\ell-1}'} \, J^{s+\ell-1} u  \right\|_{L^2}^2.
\end{aligned}
\end{equation}
To estimate the term $A_{3111,r}^{(2)}$, we take an integer $m>1$ be such that $s+\ell-1-2m\leq 0$. Then, we write
\begin{equation*}
 \begin{aligned}
 \psi_{\ell}^{(r+2)} \Lambda_r J^{1-r} \partial_x J^{-1} (1-\psi_{\ell-1}') J^{s+\ell-1} u=& \psi_{\ell}^{(r+2)}\Lambda_r J^{1-r} \partial_x J^{-1} (1-\psi_{\ell-1}')J^{2m} J^{s+\ell-1-2m} u.
 \end{aligned}   
\end{equation*}
Since $J^{2m}$ is a local operator and the supports of $\psi_{\ell}^{(r+2)}$ and $1-\psi_{\ell-1}'$ are separated by a positive constant, we may apply the singular integral representation for pseudo-differential operators and integration by parts to deduce that
\begin{equation*}
\begin{aligned}
\| \psi_{\ell}^{(r+2)}\Lambda_r J^{1-r}  \partial_x J^{-1} (1-\psi_{\ell-1}') J^{2m} J^{s+\ell-1-2m}u\|_{L^2}&\lesssim  \|J^{s+\ell-1-2m}u\|_{L^2}\\
&\lesssim \|u\|_{L^2}\lesssim R.   
\end{aligned}    
\end{equation*}
We conclude from Young's inequality
\begin{equation}\label{B3111r}
\begin{aligned}
| A_{3111,r}^{(2)}| \lesssim R \left\|\sqrt{\psi_{\ell-1}'} \, J^{s+\ell-1} u \,  \right\|_{L^2}\lesssim R^2+\left\|\sqrt{\psi_{\ell-1}'} \, J^{s+\ell-1} u \,  \right\|_{L^2}^2,
\end{aligned}    
\end{equation}
for each $1\leq r\leq 2\ell-2$. 

Next, since $J^{\ell-1} K_{-2\ell+2} \partial_x J^{-1} J^{\ell-1}$ has order $0$, we have 
\begin{equation} \label{B3111III}
\begin{aligned}
|A_{3111,2\ell-1}| \lesssim \|J^{\ell-1} K_{-2\ell+2} \partial_x J^{-1} J^{\ell-1} (J^s u)\|_{L^2} \|J^s u\|_{L^2} \lesssim \|J^s u\|_{L^2}^2 \leq R^2.
\end{aligned}
\end{equation}
Given that $\sqrt{\psi_{\ell-1}'}\Big|_{\supp(\psi_{\ell}''')} \equiv 1$, we get 
\begin{equation} \label{B3111IV}
\begin{aligned}
|A_{3111,2\ell}| \lesssim \left|\int_{\mathbb{R}}  \psi_{\ell}''' \psi_{\ell-1}' \, (J^{s+\ell-1} u)^2 \, dx \right| \lesssim \left\|\sqrt{\psi_{\ell-1}'} \, J^{s+\ell-1} u \right\|_{L^2}^2.
\end{aligned}
\end{equation}
From \eqref{B3111in}, \eqref{B3111I}, \eqref{B3111r}, \eqref{B3111III}, and \eqref{B3111IV}, we conclude that 
    \begin{equation} \label{B3111}
        |A_{3111}| \lesssim R^2+\left\|\sqrt{\psi_{\ell-1}'}\, J^{s+\ell-1} u \right\|_{L^2}^2.
    \end{equation}
The previous argument can also be applied to control $A_{3112}$, obtaining the same bound in \eqref{B3111}. This completes the study of $A_{311}$ in \eqref{B31in}.

To deal with $A_{312,1}$ in \eqref{B31in}, observe that 
\begin{equation*}
\begin{aligned}
A_{312,1} 
=& -\frac{N(N-1)}{2} \int_{\mathbb{R}} ([\partial_x J,\psi_{\ell}''] J^{s+\ell-2} u) \, J^{s+\ell-1} u \, dx \\ &+ \frac{N(N-1)}{4} \int_{\mathbb{R}} \psi_{\ell}'''  (J^{s+\ell-1} u)^2 \, dx\\
=:& A_{312,1}^{(1)} + A_{312,1}^{(2)}.
\end{aligned}
\end{equation*}
By Lemma \ref{LemmaAllpseudo}, we can decompose the commutator in $A_{312,1}^{(1)}$ to use the same ideas in \eqref{SuppSep} to obtain  
\begin{equation*}
|A_{312,1}^{(1)}| \lesssim R^2+\left\| \sqrt{\psi_{\ell-1}'} J^{s+\ell-1} u  \right\|_{L^2}^2.
\end{equation*}
Furthermore, support considerations imply 
    \begin{equation*}
        |A_{312,1}^{(2)}| \lesssim \left\| \sqrt{\psi_{\ell-1}'}\, J^{s+\ell-1} u  \right\|_{L^2}^2.
    \end{equation*}
Therefore,
    \begin{equation} \label{B312I}
        |A_{312,1}| \lesssim R^2+\left\| \sqrt{\psi_{\ell-1}'} J^{s+\ell-1} u  \right\|_{L^2}^2.
    \end{equation}
Expanding the commutators $[\partial_x J,\psi_{\ell}'']$ and $[\partial_x J,\psi_{\ell}^{(j)}]$, and following a similar procedure to that in \eqref{SuppSep}, the same estimate above also holds for the terms $A_{312,2}$ and $A_{31,j}$, $3\leq j\leq 2\ell$ in \eqref{B31in}. We omit these calculations to avoid redundancy.

Since $\partial_x J^{\ell}$ is a skew-adjoint operator, we have
\begin{equation*}
A_{31,2\ell+1} = \int_{\mathbb{R}} (\partial_x J^{\ell} K_{N-2\ell-1} J^{s-N+\ell} u) \, J^s u \, dx,
\end{equation*}
and because $\partial_x J^{\ell} K_{N-2\ell-1} J^{-N+\ell}\in \mathrm{OP}\mathcal{S}^0$, we get
    \begin{equation} \label{B31last}
        |A_{31,2\ell+1}| \lesssim \|J^s u\|_{L^2}^2 \leq R^2.
    \end{equation}
Going back to \eqref{B31in}, from \eqref{B3111}, \eqref{B312I}, \eqref{B31last}, and the above comments, we conclude that
\begin{equation} \label{B3}
A_3 = \left(N+\frac{1}{2}\right) \int_{\mathbb{R}}   (J^{s+\ell} u)^2\, \psi_{\ell}' \, dx + \widetilde{A_3},
\end{equation}
where 
\begin{equation*}
|\widetilde{A_3}| \lesssim  R^2+\left\| \sqrt{\psi_{\ell-1}'}\, J^{s+\ell-1} u\right\|_{L^2}^2. 
\end{equation*}  
Next, we write
\begin{equation*}
\begin{aligned}
A_4 = \sum_{j=0}^{N-2} c_j' \int_{\mathbb{R}} \psi_\ell \, J^{s-N+\ell} u \, (\partial_x J^{s+N+(\ell-2)-2j} u)\, dx=: \sum_{j=0}^{N-2} A_{4,j}.
\end{aligned}
\end{equation*}
For each $j\in \{0,\ldots,N-2\}$, we write
\begin{equation*}
\begin{aligned}
A_{4,j} =& -c_j' \int_{\mathbb{R}} ([\partial_x J^{N-1-j} u, \psi_{\ell}] J^{s-N+\ell} u) \, J^{s+\ell-1-j} u\, dx \\&- c_j' \int_{\mathbb{R}} \psi_{\ell} (\partial_x J^{s+\ell-1-j} u) \, J^{s+\ell-1-j} u\, dx\\
=:& A_{4,j}^{(1)} + A_{4,j}^{(2)}.
\end{aligned}
\end{equation*}
If $j\geq \ell-1$, then both $A_{4,j}^{(1)}$ and $A_{4,j}^{(2)}$ are bounded by a constant $R^2$. On the other hand,  let $j\in \{0,\ldots,\ell-2\}$, by Lemma \ref{LemmaAllpseudo} there exist real constants $q_r$, pseudo-differential operators $\Theta_r$ of order $0$, and a pseudo-differential operator $\Psi_{N-2\ell+1+j}$ of order $N-2\ell+1+j$ such that 
\begin{equation*}
\begin{aligned}
\left[\partial_x J^{N-1-j} u, \psi_{\ell}\right] = \sum_{r=1}^{2\ell-2-2j} q_r \psi_{\ell}^{(r)} \, \Theta_r \, J^{N-j-r} +\Psi_{N-2\ell+1+j}.
\end{aligned}
\end{equation*}
Accordingly, we decompose $A_{4,j}$ as follows
\begin{equation*}
\begin{aligned}
A_{4,j}^{(1)} =& -c_j' \sum_{r=1}^{2\ell-2-2j} q_r \int_{\mathbb{R}} \psi_{\ell}^{(r)} \, (\Theta_r \, J^{s+\ell-j-r} u)\, J^{s+\ell-1-j} u\, dx\\ &- c_j' \int_{\mathbb{R}} (J^{\ell-1-j} \Psi_{N-2\ell+1+j} J^{s-N+\ell} u) \,J^{s} u\, dx.
\end{aligned}
\end{equation*}
Because $J^{\ell-1-j} \Psi_{N-2\ell+1+j} J^{-N+\ell}$ has order $0$, the second term on the right-hand side of the previous equation is bounded by $R^2$. To estimate the remaining factor, we will use the separated support strategy as we did for \eqref{B3111r} above. Thus, each integral in the above sum may be decomposed as 
\begin{equation*}
\begin{aligned}        
&\int_{\mathbb{R}} \psi_{\ell}^{(r)} \psi_{\ell-1-j}' \, (\Theta_r \, J^{1-r} \psi_{\ell-1-j}' J^{s+\ell-1-j} u)\, J^{s+\ell-1-j} u\, dx\\ &+ \int_{\mathbb{R}} \psi_{\ell}^{(r)} \psi_{\ell-1-j}' \, (\Theta_r \, J^{1-r} (1-\psi_{\ell-1-j}') J^{s+\ell-1-j} u)\, J^{s+\ell-1-j} u\, dx.
\end{aligned}        
\end{equation*}
Consequently, by arguing as in \eqref{SuppSep}-\eqref{B3111r}, we obtain that 
\begin{equation*}
|A_{4,j}^{(1)}| \lesssim R^2+ \left\|\sqrt{\psi_{\ell-1-j}'}\, J^{s+\ell-1-j} u \right\|_{L^2}^2.
\end{equation*}
On the other hand, since $\psi'_{\ell}=\psi'_{\ell}\psi'_{\ell-1-j}$, we have
\begin{equation*}
\begin{aligned}
A_{4,j}^{(2)} 
&= \frac{c_j'}{2} \int_{\mathbb{R}} \psi_{\ell}' \, \psi_{\ell-1-j}' \, ( J^{s+\ell-1-j} u)^2 \, dx,
\end{aligned}
\end{equation*}
which implies that 
\begin{equation*}
|A_{4,j}^{(2)}| \lesssim \left\|\sqrt{\psi_{\ell-1-j}'}\, J^{s+\ell-1-j} u \right\|_{L^2}^2.
\end{equation*}
Summarizing, we get
\begin{equation}\label{A4newest}
|A_4| \lesssim R^2+\sum_{j=0}^{\ell-2}  \left\|\sqrt{\psi_{\ell-1-j}'}\, J^{s+\ell-1-j} u \right\|_{L^2}^2 .
    \end{equation}
Recalling that $N\geq 1$ and $\ell\leq N$, it follows
\begin{equation} \label{B6}
|A_5| \lesssim \|J^{s-N+\ell} u\|_{L^2}^2 \leq \|J^s u\|_{L^2}^2 \leq R^2.
\end{equation}
For $A_{6k}$, we have to consider again two cases. If $\ell<N$, we argue as in \eqref{A6N2} to get that $|A_{6k}| \lesssim R^{k+2}$ for every $k\in\{1,\ldots,M\}$. If $\ell=N$, then replacing $\psi_1$ by $\psi_{\ell}$ in  \eqref{A6N1firstdecom}, \eqref{A6N1firstdecom2} and \eqref{A6N1}, we obtain
\begin{equation}\label{B6extra}
\begin{aligned}
|A_{6k}| \lesssim R^{k+2}+\|\partial_x u\|_{L^{\infty}}R^{k+1}.
\end{aligned}
\end{equation}
Gathering together \eqref{B1}, \eqref{B2}, \eqref{B3}, \eqref{A4newest}, \eqref{B6}, and \eqref{B6extra}, we get
\begin{equation*}
\begin{aligned}
\frac{1}{2} \frac{d}{dt} \int_{\mathbb{R}}  \psi_{\ell} \, &(J^{s-N+\ell} u)^2 \, dx + \left(N+\frac{1}{2}\right) \int_{\mathbb{R}}   (J^{s+\ell} u)^2\, \psi_\ell' \, dx\\
\lesssim&  \sum_{j=0}^{\ell-2}  \left\|\sqrt{\psi_{\ell-1-j}'}\, J^{s+\ell-1-j} u  \right\|_{L^2}^2+ \sum_{k=0}^{M} \big(R + \|\partial_x u\|_{L^{\infty}}\big)R^{k+1}.
\end{aligned}
\end{equation*}
Integrating both sides of the above expression from $0$ to $T$ yields
\begin{equation*}
\begin{aligned}
\frac{1}{2} \int_{\mathbb{R}}  \psi_{\ell} \, (J^{s-N+\ell} u(\cdot,T))^2 \, dx &+ \left(N+\frac{1}{2}\right) \int_0^T \int_{\mathbb{R}}   (J^{s+\ell} u)^2\, \psi_{\ell}' \, dx \, dt\\
\lesssim&  \int_{\mathbb{R}} \psi_{\ell }(J^{s-N+\ell} u_0)^2  \, dx + \sum_{j=0}^{\ell-2} \int_0^T \int_{\mathbb{R}}  \psi_{\ell-1-j}'\, (J^{s+\ell-1-j} u)^2 \, dx \, dt \\ 
& + \sum_{k=0}^{M} \big(RT + \|\partial_x u\|_{L^1_TL^{\infty}}\big)R^{k+1}.
\end{aligned}
\end{equation*}
All of the terms on the right-hand side above are finite due to the induction hypothesis \eqref{Inducteq} and from our LWP results Lemmas \ref{LWPN12} and \ref{StandardLWP}. As a consequence, we arrive at
\begin{equation*}
\int_0^T \int_{\mathbb{R}} (J^{s+\ell} u)^2 \, \psi_\ell' \, dx \, dt \leq c(\|u_0\|_{H^s};\|\partial_xu\|_{L_T^1 L_x^\infty};T),
\end{equation*}
which establishes the inductive step. Thus, we conclude that \eqref{Inducteq} is valid for all $\ell\leq N$.

Let $R>0$ and $\widetilde{k} \in (0,N)$. Let us assume that $A = 2^{N+1}R$ (this constant $A$ is the one used for the construction of the sequence $(\psi_{\ell
})_{\ell\geq 1}$). By the previous result, we know that 
\begin{equation*}
\psi_N' \,  J^{s+N}u \in L^2(\mathbb{R} \times (0,T)).    
\end{equation*}
Let $\phi_R$ be a smooth function defined on $\mathbb{R}$ such that $0\leq \phi_R\leq 1$ and $\supp(\phi_R) \subseteq [-R,R]$. Since $\psi_N' \equiv 1$ in $[-2^{-N}A,2^{-N}A] = [-2R,2R]$, it follows that 
\begin{equation*}
\dist(\supp(1-\psi_N'),\supp(\phi_R)))\geq \delta    
\end{equation*}
for some positive constant $\delta$. By Lemma \ref{LocReg} (iii), we get $$\phi_R J^{s+\widetilde{k}}u \in L^2(\mathbb{R} \times (0,T)),$$ which proves the desired result \eqref{Katosmoothingeq} for the operators $A^r=J^r$ with $r\in [0,s+N]$.

Finally, by applying several times Lemma \ref{LocReg} (ii), we can control $\|\phi_R \, |D|^r u\|_{L^2(\mathbb{R} \times (0,T))}$ and $\|\phi_R \, \partial_x^{\lfloor r \rfloor}|D|^{\{r\}} u\|_{L^2(\mathbb{R} \times (0,T))}$ for any $r \in [0,s+N]$. The proof is complete.
\end{proof}


\section{Propagation of regularity: Proof of Theorem \ref{ThrmPropagation}}\label{propSection}

In this part, we show the propagation of regularity principle for solutions of \eqref{Benjamineq}. The main ingredients are the results from pseudo-differential calculus and Kato's smoothing effect established in Theorem \ref{SmoothingEffectThrm}. For the deduction of this principle and similar strategies for other dispersive models, we refer to  \cite{FreireMendezRiano2022,IsazaLinaresPonce2015,IsazaLinaresPonceBO2016,KenigLinaresPonceVega2018}.

To prove Theorem \ref{ThrmPropagation}, we begin by introducing certain weighted functions. Next, we establish general estimates that will be useful in obtaining some key propositions. These, in turn, will ultimately lead to the proof of the propagation of regularity principle for solutions of \eqref{Benjamineq}.

\subsection{Weighted functions} To carry out our energy estimates, we use the construction of weighted functions introduced in \cite{IsazaLinaresPonce2015}.

\begin{lemma}
    There exists a family of functions $\{\chi_{\varepsilon,b} : \varepsilon>0, b\geq 5\varepsilon\}$ satisfying the following properties:
    \begin{enumerate}
        \item $\chi_{\varepsilon,b}'\geq 0$,
        \item $\chi_{\varepsilon,b} = \begin{cases}
            0, &\text{ if } x\leq \varepsilon,\\
            1, &\text{ if } x\geq b,
        \end{cases}$
        \item $\chi_{\varepsilon,b}'\geq \frac{1}{b-3\varepsilon} \mathds{1}_{[3\varepsilon,b-2\varepsilon]}$,
        \item $\chi_{\varepsilon,b} \geq \frac{1}{2} \frac{\varepsilon}{b-3\varepsilon} \mathds{1}_{[3\varepsilon,\infty)}$,
        \item $\supp(\chi_{\varepsilon,b}') \subseteq [\varepsilon,b]$,
    \end{enumerate}
where $ \mathds{1}_A$ stands for the indicator function over a set $A$.
\end{lemma}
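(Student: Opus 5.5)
The plan is an explicit construction: mollify a piecewise linear ramp. Fix $\varepsilon>0$ and $b\geq 5\varepsilon$. Let $\rho\in\mathcal{C}_0^{\infty}(\mathbb{R})$ be nonnegative, even, with $\supp(\rho)\subseteq(-\varepsilon,\varepsilon)$ and $\int\rho=1$. Define the continuous piecewise linear function $h$ by $h(x)=0$ for $x\leq 2\varepsilon$, $h(x)=1$ for $x\geq b-\varepsilon$, and $h$ linear on $[2\varepsilon,b-\varepsilon]$, so that $h'=\frac{1}{b-3\varepsilon}\mathds{1}_{(2\varepsilon,b-\varepsilon)}$ almost everywhere. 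Then set $\chi_{\varepsilon,b}=\rho*h$. This function is smooth and nondecreasing, and $\chi_{\varepsilon,b}'=\rho*h'\geq 0$, which gives (1).

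For (2) and (5), note that $h$ is constant $0$ on $(-\infty,2\varepsilon]$ and constant $1$ on $[b-\varepsilon,\infty)$. Because $\rho$ has support of radius $\varepsilon$ and total mass one, $\chi_{\varepsilon,b}=0$ for $x\leq\varepsilon$ and $\chi_{\varepsilon,b}=1$ for $x\geq b$. Moreover $\supp(\chi_{\varepsilon,b}')\subseteq[2\varepsilon,b-\varepsilon]+[-\varepsilon,\varepsilon]=[\varepsilon,b]$.

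Property (3) is the one step needing care, since the ramp slope must survive mollification on the whole interval $[3\varepsilon,b-2\varepsilon]$. For $x$ in that interval, the window $[x-\varepsilon,x+\varepsilon]$ lies inside $[2\varepsilon,b-\varepsilon]$. There $h'=\frac{1}{b-3\varepsilon}$ almost everywhere, so $\chi_{\varepsilon,b}'(x)=\int\rho(x-y)h'(y)\,dy=\frac{1}{b-3\varepsilon}$. This is exactly where the margin $\varepsilon$ on each side of the ramp is used, and it is why $b\geq 5\varepsilon$ is assumed: it makes the interval $[3\varepsilon,b-2\varepsilon]$ nonempty.

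For (4), use monotonicity. For $x\geq 3\varepsilon$ we have $\chi_{\varepsilon,b}(x)\geq\chi_{\varepsilon,b}(3\varepsilon)$. Since $\rho$ is even and $h$ is linear on $[2\varepsilon,4\varepsilon]\subseteq[2\varepsilon,b-\varepsilon]$, we get $\chi_{\varepsilon,b}(3\varepsilon)=h(3\varepsilon)=\frac{\varepsilon}{b-3\varepsilon}$. This is at least $\frac12\frac{\varepsilon}{b-3\varepsilon}$, which gives (4) with room to spare.
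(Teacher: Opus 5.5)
Your proof is correct, and it is essentially the construction the paper relies on: the paper gives no argument of its own and imports the family from Isaza, Linares and Ponce, which is exactly the mollification at scale $\varepsilon$ of the ramp rising with slope $\frac{1}{b-3\varepsilon}$ from $2\varepsilon$ to $b-\varepsilon$. Your verifications of (1)--(5) all hold, including the use of evenness of $\rho$ and $b\geq 5\varepsilon$ to get $\chi_{\varepsilon,b}(3\varepsilon)=\frac{\varepsilon}{b-3\varepsilon}$, which gives (4) with a factor $2$ to spare.
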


To estimate the nonlinear terms in \eqref{Benjamineq}, we consider some suitable partitions of unity. Consider $\epsilon>0$, $b\geq 5\epsilon$. Let $\psi_{\varepsilon}$ be a smooth function such that 
\begin{equation*}
0\leq \psi_{\varepsilon} \leq 1 \quad\text{ and } \quad \supp(\psi_{\varepsilon}) \subseteq (-\infty,\varepsilon/2].    
\end{equation*}
Let $\phi_{\varepsilon,b}$ be a smooth function with compact support such that 
\begin{equation*}
\phi_{\varepsilon,b} = 1 \text{ in } [\varepsilon/2,\varepsilon], \quad  \supp(\phi_{\varepsilon,b}) \subseteq [\varepsilon/4,b],  
\end{equation*}
and 
\begin{equation}\label{partitionkunity0}
\chi_{\varepsilon,b}(x) + \phi_{\varepsilon,b}(x) + \psi_\varepsilon(x) = 1,    
\end{equation}
for each $x\in\mathbb{R}$. Let $k \in \{2,\dots,M+1\}$ (recall that $M$ determines the number and powers of nonlinear terms), let $\widetilde{\phi_{\varepsilon,b,k}} \in \mathcal{C}_0^{\infty}(\mathbb{R})$ be such that 
\begin{equation*}
\widetilde{\phi_{\varepsilon,b,k}} = 1 \text{ in } [\varepsilon/2,\varepsilon], \quad  \supp(\widetilde{\phi_{\varepsilon,b,k}}) \subseteq [\varepsilon/4,b]    
\end{equation*}
and
\begin{equation}\label{partitionkunity}
\chi_{\varepsilon,b}^{k}(x) + \widetilde{\phi_{\varepsilon,b,k}}^{k}(x) + \psi_\varepsilon(x) = 1,    
\end{equation}
for all $x\in\mathbb{R}$. Here $\chi_{\varepsilon,b}^{k}=\big(\chi_{\varepsilon,b}\big)^{k}$ and $ \widetilde{\phi_{\varepsilon,b,k}}^{k}= \big(\widetilde{\phi_{\varepsilon,b,k}}\big)^{k}$.

Finally, let $\eta_{\varepsilon,b}$ be a smooth function such that $0\leq \eta_{\varepsilon,b}\leq 1$ with
\begin{equation*}
\eta_{\varepsilon,b} = 1 \text{ in } [\varepsilon/7,b+3\varepsilon/4] \quad \text{and} \quad  \supp(\eta_{\varepsilon,b}) \subseteq [\varepsilon/8,b+\varepsilon].    
\end{equation*}
Let $\theta_{\varepsilon,b}$ be a smooth function such that $0\leq \theta_{\varepsilon,b}\leq 1$ with
\begin{equation}\label{thetafunctsupp}
\theta_{\varepsilon,b} = 1 \text{ in } [\varepsilon/5,b+\varepsilon/4] \quad \text{and} \quad  \supp(\theta_{\varepsilon,b}) \subseteq [\varepsilon/6,b+\varepsilon/2].    
\end{equation}

\subsection{Preliminary results}

As we emphasized at the beginning of the proof of Theorem \ref{SmoothingEffectThrm}, using the continuous dependence and an approximation argument by smooth solutions, we will assume that $u\in C([0,T];H^{\infty}(\mathbb{R}))$. In the end, the desired result for $u \in C([0,T];H^s(\mathbb{R}))$ solution of \eqref{Benjamineq} with initial condition $u_0\in H^s(\mathbb{R}) \cap H^m ((x_0,\infty))$ with $m>s>\min\{\frac{3}{2},\frac{2N+1}{4}\}$ follows by taking the limit to our estimates. Additionally, we work with the equation in \eqref{Benj2}, which is equivalent to that in \eqref{Benjamineq}.

By translation, we may assume that $x_0 = 0$. For brevity in the notation, we assume that $\chi_{\varepsilon,b}$, $\theta_{\varepsilon,b}$, and $\eta_{\varepsilon,b}$ are acting as $\chi_{\varepsilon,b}(x+vt)$, $\theta_{\varepsilon,b}(x+vt)$, and $\eta_{\varepsilon,b}(x+vt)$. 

Applying the operator $J^m$ to the equation \eqref{Benj2}, multiplying by $\chi_{\varepsilon,b}^2 J^m u$, and integrating with respect to the $x$-variable, after integration by parts, we arrive at
\begin{equation} \label{EqProp2}
\begin{aligned}
\frac{1}{2} \frac{d}{dt} &\int_{\mathbb{R}} \chi_{\varepsilon,b}^2 \,   (J^{m} u)^2 \, dx  \underbrace{- \left(\frac{v}{2} + \frac{\alpha}{2} \right) \int_{\mathbb{R}} (\chi_{\varepsilon,b}^2)' \,  (J^{m} u)^2 \, dx}_{A_1}\\ &+ \underbrace{\gamma \int_{\mathbb{R}} \chi_{\varepsilon,b}^2 \, J^{m} u \, \partial_x |D| J^{m} u \, dx}_{A_2} \underbrace{- \int_{\mathbb{R}}\chi_{\varepsilon,b}^2 \, J^{m} u \, \partial_x J^{2N+m} u\, dx}_{A_3}\\  
&+\underbrace{\int_{\mathbb{R}} \chi_{\varepsilon,b}^2 \, J^{m} u \, Q(J) J^{m} u\, dx}_{A_4}+\underbrace{\sum_{k=1}^M b_{k}\int_{\mathbb{R}} \chi_{\varepsilon,b}^2 \, J^{m} u \, J^{m} (u^{k}\partial_x u)\, dx}_{A_5} = 0.
\end{aligned}
\end{equation}
In what follows, we estimate each one of the terms $A_j=A_j(t)$, $j=1,\dots,5$. Since these calculations are independent of the value of $m>s$, we present each of them as a proposition. 

\begin{proposition}\label{PropagA1}
Let $m>s>\min\{\frac{3}{2},\frac{2N+1}{4}\}$ and $u\in C([0,T];H^{\infty}(\mathbb{R}))$. It follows
    \begin{equation*}
        |A_1(t)| \lesssim \|\eta_{\varepsilon,b} \, J^{m+N-1} u(t)\|_{L^2}^2 + \|u\|_{L_T^{\infty} H_x^s}^2.
    \end{equation*}
\end{proposition}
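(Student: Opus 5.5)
The term to control is
\[
A_1(t)=-\Big(\tfrac{v}{2}+\tfrac{\alpha}{2}\Big)\int_{\mathbb{R}} (\chi_{\varepsilon,b}^2)'\,(J^m u)^2\,dx
= -(v+\alpha)\int_{\mathbb{R}} \chi_{\varepsilon,b}\,\chi_{\varepsilon,b}'\,(J^m u)^2\,dx .
\]
The weight $\chi_{\varepsilon,b}\chi_{\varepsilon,b}'$ (evaluated at $x+vt$) is bounded and supported in $[\varepsilon,b]$. The cutoff $\eta_{\varepsilon,b}$ equals $1$ on $[\varepsilon/7,b+3\varepsilon/4]$, so this support lies well inside the region where $\eta_{\varepsilon,b}\equiv1$, with a positive gap $\delta\sim\varepsilon$. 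The plan is therefore to bound $A_1$ by a local $L^2$ norm of $J^m u$ on $\supp(\chi'_{\varepsilon,b})$. We then trade this local quantity for $\eta_{\varepsilon,b}J^{m+N-1}u$ using the localization lemma, Lemma \ref{LocReg}.

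\textbf{Step 1.} Pick a smooth $\widetilde\theta$ with $0\le\widetilde\theta\le1$, $\widetilde\theta\equiv1$ on $[\varepsilon,b]$, and $\supp\widetilde\theta\subset[\varepsilon/2,b+\varepsilon/2]$. One can take $\theta_{\varepsilon,b}$ itself, since it equals $1$ on $[\varepsilon/5,b+\varepsilon/4]$. Then
\[
|A_1|\lesssim_{v,\alpha}\|\chi'_{\varepsilon,b}\|_{L^\infty}\,\|\theta_{\varepsilon,b}J^m u\|_{L^2}^2 .
\]

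\textbf{Step 2.} Apply Lemma \ref{LocReg} (iii) with $\theta_2=\theta_{\varepsilon,b}$, $\theta_1=\eta_{\varepsilon,b}$, $f=u$, the exponent $m+N-1$ in place of $s$, and $r=m\le m+N-1$ (using $N\ge1$). The separation hypothesis holds because $\supp(1-\eta_{\varepsilon,b})\subset(-\infty,\varepsilon/7]\cup[b+3\varepsilon/4,\infty)$ while $\supp\theta_{\varepsilon,b}\subset[\varepsilon/6,b+\varepsilon/2]$, giving distance $\ge\min\{\varepsilon/42,\varepsilon/4\}>0$. Taking the integer $m_0=0$, so that $J^{-m_0}f=u\in L^2$, we get
\[
\|\theta_{\varepsilon,b}J^m u\|_{L^2}\lesssim \|\eta_{\varepsilon,b}J^{m+N-1}u\|_{L^2}+\|u\|_{L^2}.
\]
Squaring, and using $\|u(t)\|_{L^2}\le\|u\|_{L^\infty_TH^s}$, gives the claim.

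\textbf{Main point to check.} Both cutoffs are translated by $vt$. The implicit constants in Lemma \ref{LocReg} depend only on $\delta$ and on $L^\infty$ bounds of derivatives of the cutoffs, so they are translation invariant. Alternatively, one can translate $u$ itself, since $J^s$ commutes with translations. This gives bounds uniform in $t\in[0,T]$.

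If $N=1$, then $m+N-1=m$ and the lemma is used with $r=s=m$, which is allowed. Otherwise the estimate is routine. The real content is that the coarse exponent $m+N-1$ will later be controlled by the smoothing estimate \eqref{smoothing} at the previous level in an induction on $m$.
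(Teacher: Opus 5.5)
Your proposal is correct and essentially identical to the paper's proof. You dominate the weight by $\theta_{\varepsilon,b}$; your bound $|\chi_{\varepsilon,b}\chi_{\varepsilon,b}'|\lesssim\theta_{\varepsilon,b}^2$ is in fact slightly cleaner than the paper's $|\chi_{\varepsilon,b}'|\lesssim\theta_{\varepsilon,b}$. You then apply Lemma~\ref{LocReg}~(iii) with $\theta_1=\eta_{\varepsilon,b}$, $\theta_2=\theta_{\varepsilon,b}$, regularity $m+N-1$ and $r=m$, exactly as the paper does. The only blemish is cosmetic: $\theta_{\varepsilon,b}$ does not meet the condition you first state, $\supp\widetilde\theta\subset[\varepsilon/2,b+\varepsilon/2]$, since its support starts at $\varepsilon/6$. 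Step~2 uses the correct support, so nothing is affected.
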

\begin{proof}
Our choice of the function $\theta_{\varepsilon,b}$ (see, \eqref{thetafunctsupp}) implies that $|\chi_{\varepsilon,b}'|\lesssim \theta_{\varepsilon,b}$, and $ \theta_{\varepsilon,b}$ and $ \eta_{\varepsilon,b}$ satisfy the hypothesis of Lemma \ref{LocReg}. Consequently, Lemma \ref{LocReg} (iii) imply
    \begin{equation*}
        \begin{aligned}
            |A_1(t)| \lesssim \|\theta_{\varepsilon,b} \, J^m u\|_{L^2}^2 \lesssim \|\eta_{\varepsilon,b} \, J^{m+N-1} u\|_{L^2}^2 + \|u\|_{L^2}^2.
        \end{aligned}
    \end{equation*}
\end{proof}

\begin{proposition}\label{PropagA3}
Let $m>s>\min\{\frac{3}{2},\frac{2N+1}{4}\}$ and $u\in C([0,T];H^{\infty}(\mathbb{R}))$. Then there exists a positive constant $C_N\in \left( 0 , N+\frac{1}{2} \right)$ such that
\begin{equation*}
A_3(t) = C_N \int_{\mathbb{R}} (\chi_{\varepsilon,b}^2)' \, (J^{N+m}u)^2 \, dx  + R_{A_3}(t),    
\end{equation*}    
where
\begin{equation*}
|R_{A_3}(t)| \lesssim \\ \|\eta_{\varepsilon,b} \, J^{m+N-1} u(t)\|_{L^2}^2+ \|u\|_{L_T^{\infty} H_x^s}^2.    
\end{equation*}
\end{proposition}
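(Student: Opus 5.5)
The plan mirrors the treatment of $A_3$ in the proof of Theorem \ref{SmoothingEffectThrm}, with $\psi_\ell$ replaced by $\chi_{\varepsilon,b}^2$. Since $J^N$ is self-adjoint,
\begin{equation*}
A_3=-\int_{\mathbb{R}} J^N(\chi_{\varepsilon,b}^2 J^m u)\,\partial_x J^{N+m}u\,dx
=-\int_{\mathbb{R}} ([J^N,\chi_{\varepsilon,b}^2]J^m u)\,\partial_x J^{N+m}u\,dx+\frac12\int_{\mathbb{R}}(\chi_{\varepsilon,b}^2)'(J^{N+m}u)^2\,dx .
\end{equation*}
The second term already has the required form. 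For the commutator I use the expansion \eqref{JN exp} from Lemma \ref{LemmaAllpseudo} (iv), with $\psi_1$ replaced by $\chi_{\varepsilon,b}^2$. Its principal part is $-N(\chi_{\varepsilon,b}^2)'\partial_x J^{N-2}$. The remaining terms are $\sum_{j\ge2}c_j(\chi_{\varepsilon,b}^2)^{(j)}\Phi_jJ^{N-j}$ with $\Phi_j\in\mathrm{OP}\mathcal{S}^0$, plus a remainder of order $N-2N-1=-N-1$ obtained by expanding to $2N+1$ terms.

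The principal part is handled as in \eqref{A311}. I integrate by parts and use $\partial_x^2=1-J^2$:
\begin{equation*}
N\int(\chi_{\varepsilon,b}^2)'\,\partial_xJ^{m+N-2}u\,\partial_xJ^{m+N}u\,dx
= N\int(\chi_{\varepsilon,b}^2)'(J^{m+N}u)^2dx+\mathcal{E},
\end{equation*}
where $\mathcal{E}$ collects the integrals $\int(\chi^2_{\varepsilon,b})''\,\partial_xJ^{m+N-2}u\,J^{m+N}u$ and $\int(\chi_{\varepsilon,b}^2)'J^{m+N-2}u\,J^{m+N}u$. This would give the constant $N+\tfrac12$. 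The statement, however, asks for $C_N\in(0,N+\tfrac12)$, so I expect the authors to absorb a small part of the main term when bounding errors. Accordingly, I aim for $C_N=N+\tfrac12$, or any smaller positive value after absorption, with all other terms going into $R_{A_3}$.

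Every error term, including $\mathcal{E}$ and the lower commutator terms, has the form
\begin{equation*}
\int \rho\,(\Phi J^{m+N-1-a}u)\,J^{m+N-1+b}u,\qquad a\ge b,\ \ a+b\geq 0,
\end{equation*}
with $\rho$ a derivative of $\chi_{\varepsilon,b}^2$, hence supported in $[\varepsilon,b]$ (in the moving frame). I first move half of the extra derivatives across, using the commutators $[J^{1/2},\rho]$ or $[J,\rho]$ as in \eqref{A3111} and \eqref{A312}. This reduces each term to a pairing of two factors of order at most $m+N-1$. I then localize with $\theta_{\varepsilon,b}$, which equals $1$ on $\supp\rho$, writing $\Phi J^{\cdot}u=\Phi(\theta_{\varepsilon,b}J^{\cdot}u)+\Phi((1-\theta_{\varepsilon,b})J^{\cdot}u)$. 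For the separated-support pieces, Lemma \ref{IneqSupports} (or Lemma \ref{Ds-Js}), as in \eqref{SuppSep}, gives a bound by $\|u\|_{L^2}$. For the local pieces, Lemma \ref{LocReg} (iii) with the pair $(\theta_{\varepsilon,b},\eta_{\varepsilon,b})$ gives $\|\theta_{\varepsilon,b}J^{r}u\|_{L^2}\lesssim\|\eta_{\varepsilon,b}J^{m+N-1}u\|_{L^2}+\|u\|_{L^2}$ for all $r\le m+N-1$. The final remainder of order $-N-1$, composed with $\partial_xJ^{N}$ and $J^{m}$, has order at most $m$. After one more localization I bound it by $\|\eta_{\varepsilon,b}J^{m+N-1}u\|_{L^2}^2+\|u\|_{L^\infty_TH^s}^2$, using $m\le m+N-1$.

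The main obstacle is the bookkeeping in the middle step: each term with total order $2(m+N)-1$ must split symmetrically into two factors of order at most $m+N-1$ supported near $[\varepsilon,b]$, with no stray factor of order $m+N$ left over. Where a leftover product of the form $\int(\chi_{\varepsilon,b}^2)'\,J^{m+N}u\,J^{m+N-1}u$ appears, I bound it by Young's inequality,
\begin{equation*}
\delta\int(\chi_{\varepsilon,b}^2)'(J^{m+N}u)^2\,dx+c_\delta\|\eta_{\varepsilon,b}J^{m+N-1}u\|_{L^2}^2 .
\end{equation*}
The weight $(\chi_{\varepsilon,b}^2)'=2\chi_{\varepsilon,b}\chi_{\varepsilon,b}'\ge0$ allows the first term to be absorbed into the main term. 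This produces $C_N=N+\tfrac12-\delta\in(0,N+\tfrac12)$, which matches the statement.
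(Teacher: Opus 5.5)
Your overall route is the paper's. You use self-adjointness of $J^N$, expand $[J^N,\chi_{\varepsilon,b}^2]$ via Lemma \ref{LemmaAllpseudo}, and integrate by parts with $\partial_x^2=1-J^2$ to extract $(N+\frac12)\int(\chi_{\varepsilon,b}^2)'(J^{N+m}u)^2$. You then localize with $\theta_{\varepsilon,b},\eta_{\varepsilon,b}$, using Lemma \ref{LocReg} (iii) and separated supports, and absorb one term by Young's inequality to get $C_N=N+\frac12-\omega$. The paper absorbs $-N\int(\chi_{\varepsilon,b}^2)'J^{N+m-2}u\,J^{N+m}u$ this way.

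However, one step fails as written: truncating the commutator expansion after $2N+1$ terms. The remainder then contributes
\[
-\int_{\mathbb{R}}(\Phi_{-N-1}J^m u)\,\partial_xJ^{N+m}u\,dx=\int_{\mathbb{R}}(\partial_xJ^{N}\Phi_{-N-1}J^m u)\,J^m u\,dx ,
\]
which pairs $J^mu$ with an order-zero operator applied to $J^mu$. Lemma \ref{LemmaAllpseudo} (iv) only gives $\Phi_{-N-1}\in\mathrm{OP}\mathcal{S}^{-N-1}$, with no cutoff attached, so ``one more localization'' has nothing to act on. The only available bound is $\|J^mu(t)\|_{L^2}^2$, a global $H^m$ norm. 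That quantity is not on the allowed right-hand side, and it is not uniformly bounded along the smooth approximations, since the data lie in $H^s$ only globally and $m>s$.

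The fix is the paper's: expand to $R\ge 2(N+m)$ terms. Then the remainder term is $\int_{\mathbb{R}}(\partial_xJ^{N+m}\Phi_{N-R-1}J^mu)\,u\,dx$, whose operator has order $2N+2m-R\le 0$, so it is $\lesssim\|u\|_{L^2}^2$. Every additional explicit term carries a factor $(\chi_{\varepsilon,b}^2)^{(k)}$ supported in $[\varepsilon,b]$ and is handled by your localization argument. Alternatively, you would need to show the remainder is localized modulo smoothing operators, using that $\chi_{\varepsilon,b}^2$ is constant off $[\varepsilon,b]$; you do not give that argument.

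The same caveat applies to the auxiliary commutators $[J,\rho]$ and $[\partial_xJ,\rho]$ you use to move derivatives across. In the paper these are expanded to order $-\widetilde m$ with $\widetilde m\ge 2(N+m-1)$, and likewise with $\widetilde R$, precisely so that their unlocalized remainders cost only $\|u\|_{L^2}^2$. The crude order-zero bound in \eqref{A3111} worked there only because the regularity level was $s$.

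Finally, your classification with $a\ge b$ is inaccurate. For example, $\int(\chi_{\varepsilon,b}^2)''\,\partial_xJ^{m+N-2}u\,J^{m+N}u$ has total order $2(m+N)-1$. Moving half a derivative with $[J^{1/2},\rho]$ cannot bring both factors to order $m+N-1$. You must move a full $J$ through $[J,(\chi_{\varepsilon,b}^2)'']$ and then use $\int\rho\,(\partial_xg)\,g=-\frac12\int\rho' g^2$.
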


\begin{proof}
By using commutators and integration by parts, we find
\begin{equation*}
\begin{aligned}
A_3 &= - \int_{\mathbb{R}}([J^N,\chi_{\varepsilon,b}^2] \, J^{m} u) \, \partial_x J^{N+m} u\, dx + \frac{1}{2} \int_{\mathbb{R}}(\chi_{\varepsilon,b}^2)' \, (J^{N+m} u )^2 \, dx\\ 
&=: A_{31} + \frac{1}{2} \int_{\mathbb{R}}(\chi_{\varepsilon,b}^2)' \, (J^{N+m} u )^2 \, dx.
\end{aligned}
\end{equation*}
Let $R$ be a positive integer such that $R\geq 2(N+m)$. By Lemma \ref{LemmaAllpseudo}, we expand the commutator above as follows
\begin{equation*}
\begin{aligned}
\left[J^N,\chi_{\varepsilon,b}^2\right] =& -N(\chi_{\varepsilon,b}^2)' \partial_x J^{N-2} - \frac{N(N-1)}{2} (\chi_{\varepsilon,b}^2)'' J^{N-2} \\ &+ \frac{N(N-2)}{2}(\chi_{\varepsilon,b}^2)'' J^{N-4}+\sum_{k=3}^R d_k (\chi_{\varepsilon,b}^2)^{(k)} \Phi^k J^{N-k}\\ &+ \Phi_{N-R-1},
\end{aligned}
\end{equation*}
for some constants $d_k$, some pseudo-differential operators $\Phi^k \in \mathrm{OP}\mathcal{S}^{0}$, and a pseudo-differential operator $\Phi_{N-R-1}\in \mathrm{OP}\mathcal{S}^{N-R-1}$. Then, $A_{31}$ may be rewritten as follows
\begin{equation}\label{A31prop}
    \begin{aligned}
        A_{31} =& N \int_{\mathbb{R}} (\chi_{\varepsilon,b}^2)' \, (\partial_x J^{N-2} J^m u) \, \partial_x J^{N+m} u \, dx + \frac{N(N-1)}{2} \int_{\mathbb{R}} (\chi_{\varepsilon,b}^2)'' \, (J^{N-2} J^m u) \, \partial_x J^{N+m} u \, dx \\ &-\frac{N(N-2)}{2} \int_{\mathbb{R}} (\chi_{\varepsilon,b}^2)'' \, (J^{N-4} J^m u) \, \partial_x J^{N+m} u \, dx \\&- \sum_{k=3}^R d_k \int_{\mathbb{R}}(\chi_{\varepsilon,b}^2)^{(k)} \, (\Phi^k J^{N-k}J^mu) \, \partial_x J^{N+m} u \, dx - \int_{\mathbb{R}} (\Phi_{N-R-1} J^m u) \, \partial_x J^{N+m} u \, dx\\
        =:& A_{31}^{(1)} + A_{31}^{(2,1)} + A_{31}^{(2,2)} + \sum_{k=3}^{R} A_{31}^{(k)} + A_{31}^{(R+1)}.
    \end{aligned}
\end{equation}
Integration by parts and writing $\partial_x^2 = 1-J^2 $ yield
\begin{equation}\label{A31(11)}
\begin{aligned}
    A_{31}^{(1)} =& -  N \int_{\mathbb{R}} (\chi_{\varepsilon,b}^2)'' \, (\partial_x J^{N-2+m}u) \, J^{N+m} u \, dx -  N \int_{\mathbb{R}} (\chi_{\varepsilon,b}^2)' \, (\partial_x^2 J^{N-2+m}u) \, J^{N+m} u \, dx\\ =& -  N \int_{\mathbb{R}} (\chi_{\varepsilon,b}^2)'' \, (\partial_x J^{N-2+m}u) \, J^{N+m} u \, dx+ N \int_{\mathbb{R}} (\chi_{\varepsilon,b}^2)' \, ( J^{N+m}u) ^2\,dx \\ &-  N \int_{\mathbb{R}} (\chi_{\varepsilon,b}^2)' \, (J^{N-2+m}u) \, J^{N+m} u \, dx \\ =& A_{31}^{(1,1)} + N \int_{\mathbb{R}} (\chi_{\varepsilon,b}^2)' \, ( J^{N+m}u) ^2\,dx + A_{31}^{(1,2)}.
\end{aligned}
\end{equation}
Notice that 
\begin{equation*}
\begin{aligned}
A_{31}^{(1,1)} =& - N \int_{\mathbb{R}} ([J,(\chi_{\varepsilon,b}^2)''] \, \partial_x J^{N-2+m}u) \, J^{N+m-1} u \, dx \\ &+ \frac{N}{2} \int_{\mathbb{R}} (\chi_{\varepsilon,b}^2)'''(J^{N+m-1} u)^2 \, dx\\ =:&  A_{31}^{(1,1,1)}  + A_{31}^{(1,1,2)}.
\end{aligned}
\end{equation*}
Since $|(\chi_{\varepsilon,b}^2)'''| \lesssim  \eta_{\varepsilon,b}^2$, we get 
\begin{equation*}
    |A_{31}^{(1,1,2)}| \lesssim \int_{\mathbb{R}} \eta_{\varepsilon,b}^2\, (J^{N+m-1} u)^2\, dx.
\end{equation*}
On the other hand, let $\widetilde{m}$ be an integer such that $\widetilde{m} \geq 2(N+m-1)$. By Lemma \ref{LemmaAllpseudo}, we have that for each $j\in\{1,\dots,\widetilde{m}\}$ there exist a constant $c_j$ and $\Psi^{-j}\in \mathrm{OP}\mathcal{S}^{-(j-1)}$ such that  
\begin{equation*}
[J,(\chi_{\varepsilon,b}^2)''] = \sum_{j=1}^{\widetilde{m}} c_j (\chi_{\varepsilon,b}^2)^{(j+2)} \Psi^{-j} + K_{-\widetilde{m}},    
\end{equation*}
where $K_{-\widetilde{m}}\in \mathrm{OP}\mathcal{S}^{-\widetilde{m}}$. Applying this decomposition, we get
\begin{equation*}
    \begin{aligned}
         A_{31}^{(1,1,1)} =& -\sum_{j=1}^{\widetilde{m}} N c_j \int_{\mathbb{R}}(\chi_{\varepsilon,b}^2)^{(j+2)} \, (\Psi^{-j}  \partial_x J^{N-2+m}u) \, J^{N+m-1} u \, dx\\ &-N \int_{\mathbb{R}} (K_{-\widetilde{m} }\partial_x J^{N-2+m}u) \,  J^{N+m-1} u \, dx\\ =:& \sum_{j=1}^{\widetilde{m}} A_{31}^{(1,1,1,j)} +A_{31}^{(1,1,1,\widetilde{m}+1)}. 
    \end{aligned}
\end{equation*}
Since $\eta_{\varepsilon,b} \equiv 1$ in $\supp((\chi_{\varepsilon,b}^2)')$, it follows that $\eta_{\varepsilon,b}(\chi_{\varepsilon,b}^2)^{(j)}  = (\chi_{\varepsilon,b}^2)^{(j)}$ for each positive integer $j$. Additionally, there is a positive constant $\delta$ such that $$\dist(\supp((\chi_{\varepsilon,b}^2)^{(j)}) , \supp(1-\eta_{\varepsilon,b})) > \delta$$ for every positive integer $j$. Hence, by using the separated support strategy as in \eqref{SuppSep}-\eqref{B3111r} (which depends on Lemma \ref{IneqSupports}), we deduce 
\begin{equation} \label{SepSuppProp}
    \begin{aligned}
|A_{31}^{(1,1,1,j)}|  \lesssim& \left| \int_{\mathbb{R}}(\chi_{\varepsilon,b}^2)^{(j+2)}  \eta_{\varepsilon,b} \, (\Psi^{-j} \partial_x J^{-1} (\eta_{\varepsilon,b} J^{N+m-1}u)) \, J^{N+m-1} u \, dx\right|\\ &+ \left| \int_{\mathbb{R}}(\chi_{\varepsilon,b}^2)^{(j+2)}  \eta_{\varepsilon,b} \, (\Psi^{-j} \partial_x J^{-1} ((1-\eta_{\varepsilon,b}) J^{N-1+m}u)) \, J^{N+m-1} u \, dx\right|\\ \lesssim& \left(\|\eta_{\varepsilon,b} J^{N+m-1}u\|_{L^2} + \|(\chi_{\varepsilon,b}^2)^{(j+2)}  \, (\Psi^{-j} \partial_x J^{-1} ((1-\eta_{\varepsilon,b}) J^{N+m-1}u))\|_{L^2}\right)\\
&\times \|\eta_{\varepsilon,b} J^{N+m-1} u\|_{L^2}\\
\lesssim& \left(\|\eta_{\varepsilon,b} J^{N+m-1}u\|_{L^2} +\|u\|_{L^{\infty}_TH^s_x}\right) \|\eta_{\varepsilon,b} J^{N+m-1} u\|_{L^2}.
\end{aligned}
\end{equation}
In addition, our choice of $\widetilde{m}$ and the continuity of pseudo-differential operators establish $|A_{31}^{(1,1,1,\widetilde{m}+1)}| \lesssim \|u\|_{L^2}^2$.
   
Let $\zeta =\sqrt{(\chi_{\varepsilon,b}^2)'}$. Given $\omega>0$ small enough, an application of the Cauchy-Schwarz and Young's inequality with $\omega>0$, together with Lemma \ref{LocReg} (ii) yields
\begin{equation} \label{CSomega}
    \begin{aligned}
        |A_{31}^{(1,2)}| &\leq N \|\zeta \, J^{N+m-2}u\|_{L^2} \|\zeta\, J^{N+m} u\|_{L^2}\\ &\leq  C_\omega\|\zeta \, J^{N+m-2}u\|_{L^2}^2 + \omega\|\zeta\, J^{N+m} u\|_{L^2}^2\\ &\leq C_\omega' \|\theta_{\varepsilon,b} \, J^{N+m-2}u\|_{L^2}^2 + \omega\|\zeta\, J^{N+m} u\|_{L^2}^2\\ &\leq C_\omega'' \|\eta_{\varepsilon,b} \, J^{N+m-1}u\|_{L^2}^2 + \omega\|\zeta\, J^{N+m} u\|_{L^2}^2.
    \end{aligned}
\end{equation}
This completes the estimate of $A_{31}^{(1)}$ in \eqref{A31(11)}.  Now,   using Lemma \ref{LemmaAllpseudo}, we expand the commutator $[\partial_x J,(\chi^2_{\varepsilon,b})'']$ to get
\begin{equation*}
\begin{aligned}
A_{31}^{(2,1)}  =& -\frac{N(N-1)}{2} \sum_{r=1}^{\widetilde{R}} \lambda_r \int_{\mathbb{R}} (\chi_{\varepsilon,b}^2)^{(r+2)} \, (\Lambda^r J^{1-r} J^{N+m-1}u) \, J^{N+m-1} u \, dx \\ 
&- \frac{N(N-1)}{2} \int_{\mathbb{R}} (\Lambda^{\widetilde{R}+1}J^{N+m-2}u) \, J^{N+m-1} u \, dx \\ &+\frac{N(N-1)}{4} \int_{\mathbb{R}} (\chi_{\varepsilon,b}^2)''' \, (J^{N+m-1} u)^2  \, dx\\ 
=:& \sum_{r=1}^{\widetilde{R}} A_{31}^{(2,1,r)} + A_{31}^{(2,1,\widetilde{R}+1)} +A_{31}^{(2,1,\widetilde{R}+2)},
\end{aligned}
\end{equation*}
where $\widetilde{R}$ is a positive integer such that $\widetilde{R}\geq 2(N+m-1)$, the $\lambda_r$'s are constants, $\Lambda^r \in \mathrm{OP}\mathcal{S}^{0}$, and $\Lambda^{\widetilde{R}+1}\in \mathrm{OP}\mathcal{S}^{1-\widetilde{R}}$.

Using the strategy of separated supports as in \eqref{SepSuppProp}, we deduce that for each $r\in \{1,\dots,\widetilde{R}\}$,
\begin{equation*}
\begin{aligned}
|A_{31}^{(2,1,r)}| \lesssim& \left(\|\eta_{\varepsilon,b} J^{N-1+m}u\|_{L^2} +\|u\|_{L^{\infty}_TH^s_x}\right) \|\eta_{\varepsilon,b} J^{N+m-1} u\|_{L^2}.
 \end{aligned}       
\end{equation*}
Next, from the choice of $\widetilde{R}$, $A_{31}^{(2,1,\widetilde{R}+1)}$ is bounded by a constant multiple of $\|u\|_{L^2}^2$. Again, considering the support of the involved functions, the same estimate in \eqref{SepSuppProp} holds for $A_{31}^{(2,1,\widetilde{R}+2)}$. This concludes the estimates for $A_{31}^{(2,1)}$. Building on the previous ideas, we get similar bounds for $A_{31}^{(2,2)}$ and $A_{31}^{(k)}$ for every $k\in \{3,\ldots,R+1\}$. 

Finally, collecting all the previous estimates, we have deduced that for all $\omega>0$,
\begin{equation*}
A_3(t) = \Big(N+\frac{1}{2}-\omega\Big) \int_{\mathbb{R}} (\chi_{\varepsilon,b}^2)' \, (J^{N+m}u)^2 \, dx  + R_{A_3}(t),    
\end{equation*}

This finishes the estimate of $A_3(t)$. 
\end{proof}

\begin{proposition}\label{PropagA4}
Let $m>s>\min\{\frac{3}{2},\frac{2N+1}{4}\}$ and $u\in C([0,T];H^{\infty}(\mathbb{R}))$. It follows
    \begin{equation*}
        |A_4(t)| \lesssim \|\eta_{\varepsilon,b} J^{m+N-1} u(t)\|_{L^2}^2  + \|u\|_{L_T^{\infty} H_x^s}^2.
    \end{equation*}
\end{proposition}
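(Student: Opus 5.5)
Since $A_4=0$ when $N=1$, we assume $N\geq 2$ and write $Q(J)=\sum_{j=0}^{N-2}c_j'\partial_x J^{2N-2-2j}$. Then
\begin{equation*}
A_4=\sum_{j=0}^{N-2} c_j'\int_{\mathbb{R}} \chi_{\varepsilon,b}^2\, J^m u\, \partial_x J^{2N-2-2j+m}u\, dx=:\sum_{j=0}^{N-2}A_{4,j},
\end{equation*}
and we estimate each $A_{4,j}$ by the same commutator scheme used for $A_3$ in Proposition~\ref{PropagA3}, with $J^N$ replaced by $J^{N-1-j}$. Moving $J^{N-1-j}$ across with self-adjointness and commuting it with $\chi^2_{\varepsilon,b}$ gives
\begin{equation*}
A_{4,j}=-c_j'\int_{\mathbb{R}}\big([J^{N-1-j},\chi_{\varepsilon,b}^2]J^m u\big)\,\partial_x J^{N-1-j+m}u\,dx+\frac{c_j'}{2}\int_{\mathbb{R}}(\chi_{\varepsilon,b}^2)'\,(J^{N-1-j+m}u)^2\,dx .
\end{equation*}
For the second term we have $|(\chi^2_{\varepsilon,b})'|\lesssim\theta_{\varepsilon,b}^2$ and $N-1-j+m\le N+m-1$. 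Lemma~\ref{LocReg} (iii), applied with the pair $\theta_{\varepsilon,b},\eta_{\varepsilon,b}$, then bounds it by $\|\eta_{\varepsilon,b}J^{m+N-1}u\|_{L^2}^2+\|u\|_{L^2}^2$. This is the only place the top order $m+N-1$ actually appears, namely for $j=0$.

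For the commutator term we expand with Lemma~\ref{LemmaAllpseudo} (iv)--(v) up to a large order $\widetilde R\geq 2(N+m)$:
\begin{equation*}
[J^{N-1-j},\chi^2_{\varepsilon,b}]=\sum_{k=1}^{\widetilde R} d_k(\chi^2_{\varepsilon,b})^{(k)}\Phi^k J^{N-1-j-k}+\Phi_{N-1-j-\widetilde R-1},
\end{equation*}
where $\Phi^k\in\mathrm{OP}\mathcal{S}^0$. We treat the two parts separately.
\begin{itemize}
\item The remainder has order so negative that, after transferring $\partial_x J^{N-1-j}$ onto it by skew-adjointness, the term is bounded by $\|u\|_{L^2}^2$.
\item In each term $k\geq 1$ we integrate by parts once, using that $\partial_x J^{N-1-j}$ is skew-adjoint up to commutators. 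This places at most $N-1-j+m-\frac{k}{2}\le N+m-1$ derivatives on each factor, paired against the weight $(\chi^2_{\varepsilon,b})^{(k)}$.
\end{itemize}

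The main step, as in \eqref{SepSuppProp}, is to control these weighted pairings by $\eta_{\varepsilon,b}$ localizations. Since $\eta_{\varepsilon,b}\equiv1$ on $\supp(\chi^2_{\varepsilon,b})^{(k)}$, we split $J^{N+m-1}u=\eta_{\varepsilon,b}J^{N+m-1}u+(1-\eta_{\varepsilon,b})J^{N+m-1}u$ inside the order-zero operators $\Phi^kJ^{1-k}\partial_xJ^{-1}$.
\begin{itemize}
\item The localized piece gives $\|\eta_{\varepsilon,b}J^{N+m-1}u\|_{L^2}^2$ by $L^2$-boundedness (Lemma~\ref{LemmaAllpseudo} (ii)).
\item The piece with $1-\eta_{\varepsilon,b}$ has support separated from $\supp(\chi^2_{\varepsilon,b})^{(k)}$. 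Writing $J^{N+m-1}=J^{2\widetilde m}J^{N+m-1-2\widetilde m}$ and applying Lemma~\ref{IneqSupports}, we bound it by $\|u\|_{L^2}\,\|\eta_{\varepsilon,b}J^{N+m-1}u\|_{L^2}$.
\end{itemize}
Young's inequality then gives the claimed bound. The delicate point is bookkeeping in the leading $k=1$ term for $j=0$, which behaves like $\int(\chi^2)'\,\partial_xJ^{N+m-3}u\,\partial_xJ^{N+m-1}u$. After writing $\partial_x^2=1-J^2$ this term has total order $2(N+m-1)$. It therefore fits the $\eta_{\varepsilon,b}J^{m+N-1}$ bound with no leftover $J^{N+m}$ factor. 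Unlike $A_3$, no absorption into the smoothing term is needed, because $Q(J)$ has order at most $2N-1$.
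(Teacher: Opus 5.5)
Your proposal is correct and follows the route the paper indicates. The paper omits the details of Proposition~\ref{PropagA4}, saying only that one repeats the commutator expansions and separated-support arguments of Proposition~\ref{PropagA3} together with Lemma~\ref{LocReg}~(iii), which is what you carry out; you also give the right reason the estimate closes without absorption into the smoothing term, namely that $Q(J)$ has order at most $2N-1$, so only $J^{m+N-1}$ appears locally. There are two harmless slips: since $A_4$ enters \eqref{EqProp2} with a plus sign (unlike $A_3$), the signs in your splitting of $A_{4,j}$ should be reversed, and the balanced order per factor in the $k$-th commutator term is $N-j+m-\frac{1+k}{2}$ rather than $N-1-j+m-\frac{k}{2}$, though neither affects the bound because only absolute values matter and the correct count is still at most $N+m-1$.
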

\begin{proof}
The term $A_4$ consists of lower-order dispersions, and its estimate follows a reasoning similar to that in the proof of Proposition \ref{PropagA3}, which we remark is based on commutator expansions and Lemma \ref{LocReg} (iii). Since this type of argument has already been presented in sufficient detail in the proof of Theorem \ref{SmoothingEffectThrm} and Proposition \ref{PropagA3}, we omit the proof here to avoid repetition.
\end{proof}

\begin{proposition} \label{PropagA2}
Let $m>s>\min\{\frac{3}{2},\frac{2N+1}{4}\}$ and $u\in C([0,T];H^{\infty}(\mathbb{R}))$. If $N=1$, then for any $\omega>0$ there exists a positive constant $C_\omega$ such that
\begin{equation*}
|A_2(t)| \leq \omega \int_{\mathbb{R}} (\chi_{\varepsilon,b}^2)' \, (J^{m+1} u)^2 \, dx + C_{\omega} \Big(\|\chi_{\varepsilon,b} \, J^m u(t)\|_{L^2}^2 + \|\eta_{\varepsilon,b} \, J^{m} u(t)\|_{L^2}^2 + \|u(t)\|_{L^2}^2\Big).
\end{equation*}
Otherwise, $A_2(t)$ satisfies 
\begin{equation*}
|A_2(t)| \lesssim \|\chi_{\varepsilon,b} \, J^m u(t)\|_{L^2}^2 + \|\eta_{\varepsilon,b} \, J^{m+N-1} u(t)\|_{L^2}^2 + \|u(t)\|_{L^2}^2.
    \end{equation*}
\end{proposition}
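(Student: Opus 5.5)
We follow the scheme used for $A_2$ in the proof of Theorem \ref{SmoothingEffectThrm}, replacing $\psi_\ell$ by $\chi_{\varepsilon,b}^2$. Setting $K:=|D|-J$, we split
\begin{equation*}
A_2=\gamma\int_{\mathbb{R}}\chi_{\varepsilon,b}^2\,J^mu\,\partial_xKJ^mu\,dx+\gamma\int_{\mathbb{R}}\chi_{\varepsilon,b}^2\,J^mu\,\partial_xJ^{m+1}u\,dx=:A_{21}+A_{22}.
\end{equation*}

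\textbf{The term $A_{21}$.} By \eqref{conmexpansionJD} we may write $\partial_xK=\partial_x(|D|-J)=\sum_{j\ge1}c_j\,\partial_xJ^{1-2j}$. The leading piece $c_1\partial_xJ^{-1}$ is of order zero, and the tail is of order $\le -2$. Since $\chi_{\varepsilon,b}^2 J^mu=\chi_{\varepsilon,b}(\chi_{\varepsilon,b}J^mu)$, we commute $\chi_{\varepsilon,b}$ through $\partial_xKJ^{m}$ using Lemma \ref{LemmaAllpseudo}. The main part is then bounded by $\|\chi_{\varepsilon,b}J^mu\|_{L^2}^2$. The commutator $[\partial_xK,\chi_{\varepsilon,b}]$ has order $-1$ and its symbol expansion involves only derivatives of $\chi_{\varepsilon,b}$. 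Those derivatives are supported where $\eta_{\varepsilon,b}\equiv1$, so the separated-support argument of \eqref{SepSuppProp} (via Lemma \ref{IneqSupports}) gives a bound by $\|\eta_{\varepsilon,b}J^{m-1}u\|_{L^2}\|\chi_{\varepsilon,b}J^mu\|_{L^2}+\|u\|_{L^2}^2$. We then apply Lemma \ref{LocReg} (iii) to raise $J^{m-1}$ to $J^m$, or to $J^{m+N-1}$, as needed. Since $\partial_x K$ is a genuinely nonlocal operator, I expect this step to be the main technical obstacle. The plan is to treat it by splitting $J^mu=\eta_{\varepsilon,b}J^mu+(1-\eta_{\varepsilon,b})J^mu$ and using Lemma \ref{Ds-Js} on the far part, whose support is separated from $\supp\chi_{\varepsilon,b}\cap\{\chi_{\varepsilon,b}<1\}$. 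On the region where $\chi_{\varepsilon,b}\equiv1$, no localization is needed, because the skew-adjoint order-zero part integrates to zero up to commutators.

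\textbf{The term $A_{22}$ when $N\ge2$.} We integrate by parts exactly as in \eqref{A22}:
\begin{equation*}
A_{22}=-\gamma\int_{\mathbb{R}}(\chi_{\varepsilon,b}^2)'J^muJ^{m+1}u\,dx-\gamma\int_{\mathbb{R}}\chi_{\varepsilon,b}^2\,\partial_xJ^muJ^{m+1}u\,dx.
\end{equation*}
In the second integral, $\partial_x$ and $J$ both have symbols of order one. Writing $\partial_xJ^mu\,J^{m+1}u$ through the skew-adjoint operator $\partial_xJ$ and commuting with $\chi_{\varepsilon,b}^2$ leaves $-\tfrac12$ of a term of the form $\int(\chi^2_{\varepsilon,b})'(\cdot)^2$ at level $J^{m+1/2}$, plus commutator remainders. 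Every surviving term carries a derivative of $\chi_{\varepsilon,b}^2$, hence is supported in $\{\eta_{\varepsilon,b}=1\}$, and involves at most $m+1\le m+N-1$ derivatives. Cauchy--Schwarz then gives the bound $\|\eta_{\varepsilon,b}J^{m+N-1}u\|_{L^2}^2+\|u\|_{L^2}^2$, where Lemma \ref{LocReg} (iii) handles the passage $\theta_{\varepsilon,b}\to\eta_{\varepsilon,b}$ and the separated-support lemma handles the remainders.

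\textbf{The term $A_{22}$ when $N=1$.} Here $m+1=m+N$ is the top level, so it cannot be absorbed into $\eta_{\varepsilon,b}J^{m+N-1}$. We instead mimic the computation leading to \eqref{N1caseeq}. Writing $A_{22}$ with $[J,\chi_{\varepsilon,b}^2]=-(\chi_{\varepsilon,b}^2)'\partial_xJ^{-1}+\Phi_{-1}$ and integrating by parts with $\partial_x^2=1-J^2$, we reduce it to three kinds of terms. First, a multiple of $\int(\chi^2_{\varepsilon,b})'J^{m+1}u\,J^mu\,dx$; Young's inequality with $\omega$ bounds this by $\omega\int(\chi_{\varepsilon,b}^2)'(J^{m+1}u)^2\,dx+C_\omega\|\eta_{\varepsilon,b}J^mu\|_{L^2}^2$. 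Second, terms with $\chi_{\varepsilon,b}''$ or $\Phi_{-1}$ at level $J^m$, which are bounded by $\|\chi_{\varepsilon,b}J^mu\|_{L^2}^2+\|\eta_{\varepsilon,b}J^mu\|_{L^2}^2+\|u\|_{L^2}^2$, using separated supports for the nonlocal pieces. Together these give the claimed $N=1$ estimate.
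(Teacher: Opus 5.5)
Your proposal is correct and follows essentially the paper's route: the same split $|D|=(|D|-J)+J$, the same symmetrization of the $J$-part as in the computation leading to \eqref{N1caseeq}, with Young's inequality with $\omega$ on $\int(\chi_{\varepsilon,b}^2)'J^mu\,J^{m+1}u\,dx$ when $N=1$, and commutator expansions from Lemma~\ref{LemmaAllpseudo} combined with separated supports and Lemma~\ref{LocReg}~(iii) for everything else; these expansions must be carried to high enough order that the non-localized remainder is bounded by $\|u\|_{L^2}^2$, since $J^mu$ is only controlled locally, so a single global $\Phi_{-1}$ as in Theorem~\ref{SmoothingEffectThrm} would not suffice. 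The differences are cosmetic: you handle the $(|D|-J)$ piece by commuting one factor $\chi_{\varepsilon,b}$ past the $L^2$-bounded, skew-adjoint operator $\partial_xK$ rather than expanding $K$ into $J^{-(2j-1)}$ terms and symmetrizing each with $J^{-j+1/2}$, and for $N\ge2$ you symmetrize with $J^{1/2}$ rather than reuse the $N=1$ identity; the detour through Lemma~\ref{Ds-Js} is unnecessary (it requires $s_1\in(0,1)$ anyway), because the truncated expansion \eqref{conmexpansionJD} with its smoothing remainder already reduces $\partial_xK$ to classical operators.
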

\begin{proof}
To deduce this proposition, we perform a more detailed decomposition of the operator $|D|-J$. From Proposition \ref{ExpDs-Js}, we can rewrite $A_2$ as follows 
\begin{equation*}
\begin{aligned}
A_2 =& \gamma \int_{\mathbb{R}} \chi_{\varepsilon,b}^2 \, J^m u \, \partial_x (|D|-J)J^m u\, dx + \gamma \int_{\mathbb{R}} \chi_{\varepsilon,b}^2 \, J^m u \, \partial_x J^{m+1} u\, dx\\ =& \gamma \sum_{j=1}^{\widetilde{L}-1}  (-1)^j\binom{1/2}{j} \int_{\mathbb{R}} \chi_{\varepsilon,b}^2 \, J^m u \, \partial_x J^{-(2j-1)} J^m u\, dx + \gamma \int_{\mathbb{R}} \chi_{\varepsilon,b}^2 \, J^m u \, \partial_x \Phi J^m u\, dx\\ &+\gamma \int_{\mathbb{R}} \chi_{\varepsilon,b}^2 \, J^m u \, \partial_x J^{m+1} u\, dx\\ =:&  \sum_{j=1}^{\widetilde{L}-1} A_2^{(j)} + A_2^{(\widetilde{L}+1)} + A_{21},
    \end{aligned}
\end{equation*}
where $\widetilde{L}$ is a positive integer such that $2\widetilde{L} \geq m+2$, and by \eqref{conmexpansionJD} and \eqref{estimateJDGen}, $\|\Phi f\|_{L^2}\lesssim \|f\|_{H^{-m-1}}$ with $[\partial_x,\Phi]=0$. Consequently, applying these facts and the Cauchy-Schwarz and Young inequalities, we get
\begin{equation*}
\begin{aligned}
|A_2^{(\widetilde{L}+1)}| &\lesssim \|\chi_{\varepsilon,b} J^m u\|_{L^2} \|\partial_x\Phi J^m u\|_{L^2} \\
&\lesssim \|\chi_{\varepsilon,b} J^m u\|_{L^2}^2 + \|u\|_{L^2}^2.
\end{aligned}
\end{equation*}
Let $j\in \{1,\dots,\widetilde{L}-1\}$ be fixed. Then, 
\begin{equation*}
\begin{aligned}
A_{2}^{(j)} =& - \gamma (-1)^j\binom{1/2}{j} \int_{\mathbb{R}} ([\partial_x J^{-j+1/2},\chi_{\varepsilon,b}^2] J^m u) \, J^{-j+1/2+m}u \, dx\\ &+ \frac{\gamma}{2} (-1)^j\binom{1/2}{j} \int_{\mathbb{R}} (\chi_{\varepsilon,b}^2)'\, ( J^{-j+1/2+m}u)^2 \, dx\\ =:& A_2^{(j,1)} + A_2^{(j,2)}.
\end{aligned}
\end{equation*}
Applying Lemma \ref{LocReg} (iii), we get
\begin{equation*}
\begin{aligned}
|A_2^{(j,2)}|\lesssim\|\theta_{\varepsilon,b} \, J^{-j+1/2+m}u \|_{L^2}^2\lesssim \|\eta_{\varepsilon,b} \, J^{m+N-1}\|_{L^2}^2 + \|u\|_{L^2}^2.
    \end{aligned}
\end{equation*}
To estimate $A_2^{(j,1)}$, we may decompose the commutator $[\partial_x J^{-j+1/2},\chi_{\varepsilon,b}^2]$ to apply arguments previously used, e.g, see estimate for $A_{31}^{(1)}$ in the proof of  Proposition \ref{PropagA3}. Finally, integration by parts and commutator estimates yield
\begin{equation*}
\begin{aligned}
A_{21} =& -\frac{\gamma}{2} \int_{\mathbb{R}} (\chi_{\varepsilon,b}^2)' \, J^m u \, J^{m+1}u \, dx - \frac{\gamma}{2} \int_{\mathbb{R}} ([J,\chi_{\varepsilon,b}^2] \, \partial_x J^m u) \, J^{m}u \, dx.
\end{aligned}
\end{equation*}
Once again, expanding the above commutator and familiar arguments leads us to the desired estimate. It should be noted that, if $N=1$, the first term on the right-hand side of the expression above requires applying the argument in \eqref{CSomega}. The reason is that the factor $\|\sqrt{(\chi_{\varepsilon,b}^2)'} J^{m+1} u\|_{L^2}^2$ would not be controlled by Lemma \ref{LocReg} (iii) (this is precisely the smoothing effect we aim to establish at this stage when $N=1$). The proof is complete.
\end{proof} 

\begin{proposition} \label{PropagA5}
Let $m>s>\min\{\frac{3}{2},\frac{2N+1}{4}\}$ and $u\in C([0,T];H^{\infty}(\mathbb{R}))$. It follows
\begin{equation*}
\begin{aligned}
|A_5(t)| \lesssim &\big(1+\|u\|_{L^{\infty}_TH^{\frac{1}{2}^{+}}}\big)^{M+2} + \big(1+\|u\|_{L^{\infty}_TH^{\frac{1}{2}^{+}}}\big)^{M+1}\|\chi_{\varepsilon,b} J^m u(t)\|_{L^2}\\
&+\big(1+\|u\|_{L^{\infty}_TH^{\frac{1}{2}^{+}}}\big)^{M-1}\big(\|u\|_{L^{\infty}_TH^{\frac{1}{2}^{+}}}+\|\partial_x u\|_{L^{\infty}}\big) \,\mathcal{G}(u,t,\varepsilon,b) \, \|\chi_{\varepsilon,b} J^m u(t)\|_{L^2}\\
&+\big(1+\|u\|_{L^{\infty}_TH^{\frac{1}{2}^{+}}}\big)^M \|\eta_{\varepsilon,b}\, J^{m+N-1} u(t)\|_{L^2}^2\\
&+\big(1+\|u\|_{L^{\infty}_TH^{\frac{1}{2}^{+}}}\big)^{M-1}\|\partial_x u\|_{L^{\infty}} \|\chi_{\varepsilon,b}\, J^{m} u(t)\|_{L^2}^2    ,
\end{aligned}
    \end{equation*}
where
\begin{equation*}
\mathcal{G}(u,t,\varepsilon,b):=\|J^m\big(u(t) \chi_{\varepsilon,b}\big)\|_{L^2}+\|J^m\big(u(t) \phi_{\varepsilon,b}\big)\|_{L^2}+\sum_{k=2}^{M+1} \|J^m(u(t) \widetilde{\phi_{\varepsilon,b,k}})\|_{L^2}.
\end{equation*}
\end{proposition}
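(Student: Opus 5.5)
We treat each summand $A_{5,k}:=b_k\int_{\mathbb{R}}\chi_{\varepsilon,b}^2\,J^m u\,J^m(u^k\partial_x u)\,dx$, $k=1,\dots,M$, separately and sum at the end. The powers of $\|u\|_{L^\infty_TH^{\frac12^+}}$ come from the embedding $\|u\|_{L^\infty}\lesssim\|u\|_{H^{\frac12^+}}$; bounding each power $k$ by $(1+\|u\|_{L^\infty_TH^{\frac12^+}})^{M}$ yields the displayed exponents.

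We start by commuting the weight through $J^m$:
\[
\chi_{\varepsilon,b}J^m(u^k\partial_x u)=[\chi_{\varepsilon,b},J^m](u^k\partial_x u)+J^m\big(\chi_{\varepsilon,b}u^k\partial_x u\big).
\]
For the commutator term we expand $[J^m,\chi_{\varepsilon,b}]$ by Lemma \ref{LemmaAllpseudo}(iv)--(v). This gives terms $\chi_{\varepsilon,b}^{(j)}\Phi_jJ^{m-j}$ with $j\ge1$, whose coefficients are supported where $\eta_{\varepsilon,b}\equiv1$, plus a remainder of very negative order. For the supported terms we insert $\eta_{\varepsilon,b}$ and use the separated-support argument (Lemma \ref{IneqSupports}, as in \eqref{SepSuppProp}). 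Since $u^k\partial_xu=\frac1{k+1}\partial_x(u^{k+1})$, we reduce to $\|\eta_{\varepsilon,b}J^{m}(u^{k+1})\|_{L^2}$. Lemma \ref{Grafakos-Oh} combined with Lemma \ref{LocReg} then bounds this by $\|u\|_{L^\infty}^{k}\|\eta_{\varepsilon,b}J^{m+N-1}u\|_{L^2}+\|u\|_{H^s}^{k+1}$, where we use $N\ge1$. Paired via Cauchy--Schwarz with $\|\chi_{\varepsilon,b}J^mu\|_{L^2}\lesssim\|\eta_{\varepsilon,b}J^{m+N-1}u\|_{L^2}+\|u\|_{L^2}$, these produce the first and fourth lines of the bound. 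The remainder is bounded by $\|u\|_{L^2}$ times powers of the $H^s$ norm.

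For the main term we insert the partitions of unity \eqref{partitionkunity0}--\eqref{partitionkunity}. We write $u^k=(\chi^k_{\varepsilon,b}+\widetilde{\phi_{\varepsilon,b,k}}^k+\psi_\varepsilon)u^k$ (and \eqref{partitionkunity0} for $k=1$), noting that $\chi_{\varepsilon,b}\psi_\varepsilon\equiv0$. The goal is the Kato-type structure
\[
J^m(\chi_{\varepsilon,b}u^k\partial_xu)=(\chi_{\varepsilon,b}u)^k\partial_xJ^m(\chi_{\varepsilon,b}u)+[J^m,(\chi_{\varepsilon,b}u)^k]\partial_x(\chi_{\varepsilon,b}u)+\text{(terms involving }\widetilde{\phi_{\varepsilon,b,k}}u,\ \phi_{\varepsilon,b}u).
\]
The first piece is handled by integration by parts after replacing $J^m(\chi_{\varepsilon,b}u)$ by $\chi_{\varepsilon,b}J^mu$ plus a commutator that is again controlled by the $\eta_{\varepsilon,b}$ terms. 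This gives $\|\partial_x(u^k)\|_{L^\infty}\|\chi_{\varepsilon,b}J^mu\|_{L^2}^2$, which is the last line. The commutator piece is estimated with the Kato--Ponce inequality, Lemma \ref{Kato-Ponce}, by $\|u\|_{L^\infty}^{k-1}\|\partial_xu\|_{L^\infty}\|J^m(\chi_{\varepsilon,b}u)\|_{L^2}$. The cross terms involving $\widetilde{\phi_{\varepsilon,b,k}}u$ and $\phi_{\varepsilon,b}u$ are estimated with Lemma \ref{Grafakos-Oh}, giving factors $\|J^m(u\widetilde{\phi_{\varepsilon,b,k}})\|_{L^2}$ and $\|J^m(u\phi_{\varepsilon,b})\|_{L^2}$. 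Together these produce the $\mathcal G$-line.

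The main obstacle is this middle step. We must distribute $J^m$ over the product $\chi_{\varepsilon,b}u^k\partial_xu$ without creating $m+1$ derivatives outside the region where smoothing is already available. The partition of unity has to be chosen so that every top-order factor is either of the form $\chi_{\varepsilon,b}J^mu$, which is absorbed by integration by parts, or localized to $\supp\phi_{\varepsilon,b}\subset[\varepsilon/4,b]$, where $\mathcal G$ and the $\eta_{\varepsilon,b}$ terms apply. In particular the $\psi_\varepsilon$ piece must be shown to be harmless. We plan to do this by pairing it against $\chi_{\varepsilon,b}$, whose support is separated from that of $\psi_\varepsilon$, and applying Lemma \ref{IneqSupports} so that the whole contribution costs only $\|u\|_{L^2}$.
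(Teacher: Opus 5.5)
Your ingredients are the paper's: commuting $\chi_{\varepsilon,b}$ through $J^m$, the partitions \eqref{partitionkunity0}--\eqref{partitionkunity}, Lemmas \ref{Kato-Ponce} and \ref{Grafakos-Oh}, and Lemma \ref{IneqSupports} for the $\psi_\varepsilon$ pieces. However, your organization of the main term $J^m(\chi_{\varepsilon,b}u^k\partial_x u)$ does not close. Forcing the structure $(\chi_{\varepsilon,b}u)^k\partial_x(\chi_{\varepsilon,b}u)$ leaves
\[
\chi_{\varepsilon,b}u^k\partial_x u-(\chi_{\varepsilon,b}u)^k\partial_x(\chi_{\varepsilon,b}u)=-\chi_{\varepsilon,b}^{k}\chi_{\varepsilon,b}'u^{k+1}+\chi_{\varepsilon,b}\big(\widetilde{\phi_{\varepsilon,b,k}}\,u\big)^k\partial_x u
\]
(with $\phi_{\varepsilon,b}$ instead of $\widetilde{\phi_{\varepsilon,b,k}}$ if $k=1$). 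The cross term still carries $\partial_x u$. Applying Lemma \ref{Grafakos-Oh} to $J^m$ of it puts $m+1$ derivatives of $u$ on the transition region $[\varepsilon/4,b]$. That is controlled neither by $\mathcal{G}$, which contains only $m$ derivatives, nor, when $N=1$, by $\|\eta_{\varepsilon,b}J^{m+N-1}u\|_{L^2}=\|\eta_{\varepsilon,b}J^{m}u\|_{L^2}$. So your premise that $m+1$ derivatives are harmless on $\supp\phi_{\varepsilon,b}$ fails exactly in the KdV/Benjamin case. For $N\ge 2$ it is controlled, but it produces a product $\|\eta_{\varepsilon,b}J^{m+N-1}u\|_{L^2}\|\chi_{\varepsilon,b}J^mu\|_{L^2}$ that is not of the displayed form.

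The missing idea is never to let $J^m$ land on $\partial_x u$ through a Leibniz rule. Instead, commute the whole coefficient out: $J^m(\chi_{\varepsilon,b}u^k\partial_x u)=\chi_{\varepsilon,b}u^k\partial_xJ^mu+[J^m,\chi_{\varepsilon,b}u^k]\partial_x u$. The first term, paired with $\chi_{\varepsilon,b}J^mu$ and integrated by parts, gives $-\frac12\int(\chi_{\varepsilon,b}^2)'u^k(J^mu)^2\,dx-\frac12\int\chi_{\varepsilon,b}^2\partial_x(u^k)(J^mu)^2\,dx$. After Lemma \ref{LocReg}(iii) these are the fourth and fifth lines. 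In the commutator, split only $\partial_xu=\partial_x(\chi_{\varepsilon,b}u)+\partial_x(\phi_{\varepsilon,b}u)+\partial_x(\psi_\varepsilon u)$. Lemma \ref{Kato-Ponce} then costs only $\|J^{m-1}\partial_x(\cdot)\|_{L^2}$ and $\|J^m(\chi_{\varepsilon,b}u^k)\|_{L^2}$, and the latter is split by \eqref{partitionkunity0}--\eqref{partitionkunity}. The $\psi_\varepsilon$ piece reduces to $-\chi_{\varepsilon,b}u^kJ^m\partial_x(\psi_\varepsilon u)$ and is handled by Lemma \ref{IneqSupports}. This is the paper's route, and it yields exactly the $\mathcal{G}$-line without your ``Kato-type structure''.

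Your commutator step also contains an error. The inequality $\|\chi_{\varepsilon,b}J^mu\|_{L^2}\lesssim\|\eta_{\varepsilon,b}J^{m+N-1}u\|_{L^2}+\|u\|_{L^2}$ is false: $\chi_{\varepsilon,b}\equiv1$ on $[b,\infty)$, while $\eta_{\varepsilon,b}$ is compactly supported. Indeed $\|\chi_{\varepsilon,b}J^mu\|_{L^2}$ is exactly the quantity the Gronwall argument must propagate. What saves the step is that the left coefficients $\chi_{\varepsilon,b}^{(j)}$ of the symbolic expansion localize the pairing to $[\varepsilon,b]$, where Lemma \ref{LocReg}(iii) applies.
\begin{itemize}
\item You need intermediate cutoffs strictly inside $\{\eta_{\varepsilon,b}=1\}$. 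Lemma \ref{LocReg} never bounds $\|\eta_{\varepsilon,b}J^m(u^{k+1})\|_{L^2}$ by $\|\eta_{\varepsilon,b}J^{m+N-1}u\|_{L^2}$ with the same cutoff.
\item The remainder of very negative order is paired globally and falls into the second line.
\end{itemize}
The paper instead splits $u^{k+1}$ by \eqref{partitionkunity} and combines Lemma \ref{CommutatorKLPV} with Lemma \ref{Grafakos-Oh}, landing in the $\mathcal{G}$-line. Finally, your factors $\|u\|_{H^s}^{k+1}$ should be $\|u\|_{L^\infty}^k\|u\|_{L^2}$, since the statement involves only $H^{\frac12^+}$ norms. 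This is possible because the error term in Lemma \ref{LocReg} is just $\|J^{-m}f\|_{L^2}$.
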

\begin{proof}
Given $k=1,\dots,M$, let us estimate the nonlinearity $u^k\partial_x u$. By the partition \eqref{partitionkunity}, we write
\begin{equation*}
\begin{aligned}
\chi_{\varepsilon,b} \, J^m(u^k\partial_xu) =& - [J^m,\chi_{\varepsilon,b}] u^k\partial_x u + J^m(\chi_{\varepsilon,b}\, u^k\partial_x u)\\ 
=& - \frac{1}{k+1}[J^m,\chi_{\varepsilon,b}] \partial_x (u^{k+1}) + [J^m , \chi_{\varepsilon,b}\, u^k]\partial_x u + \chi_{\varepsilon,b}\, u^k \, \partial_x J^m u\\ =& - \frac{1}{k+1}[J^m,\chi_{\varepsilon,b}] \partial_x ((u \chi_{\varepsilon,b})^{k+1} + (u \widetilde{\phi_{\varepsilon,b,k+1}})^{k+1} + u^{k+1} \psi_{\varepsilon})\\ &+ [J^m , \chi_{\varepsilon,b}\, u^k]\partial_x (u\chi_{\varepsilon,b} + u\phi_{\varepsilon,b} + u\psi_{\varepsilon}) \\ &+\chi_{\varepsilon,b}\, u^k \, \partial_x J^m u\\ =:& \sum_{\ell=1}^{7} A_5^{(1,\ell)}.
\end{aligned}
\end{equation*}
Applying Lemma \ref{CommutatorKLPV} and then Lemma \ref{Grafakos-Oh}, we have
\begin{equation*}
\begin{aligned}
\|A_5^{(1,1)}\|_{L^2}  \lesssim& \|J^l \chi_{\varepsilon,b}'\|_{L^2} \|J^{m-1} \partial_x ((u \chi_{\varepsilon,b})^{k+1})\|_{L^2} \\ 
\lesssim& \|J^m((u \chi_{\varepsilon,b})^{k+1})\|_{L^2}\\ 
\lesssim& \|u \chi_{\varepsilon,b}\|_{L^\infty}^{k} \|J^m(u \chi_{\varepsilon,b})\|_{L^2}\\ 
\lesssim&\|u \|_{L^\infty}^k \|J^m(u \chi_{\varepsilon,b})\|_{L^2}.
\end{aligned}
\end{equation*}
Similarly, we get $\|A_5^{(1,2)}\|_{L^2}\lesssim\|u \|_{L^\infty}^k \|J^m(u \widetilde{\phi_{\varepsilon,b,k+1}})\|_{L^2}$. Now, given that $\dist(\supp(\chi_{\varepsilon,b}),\supp(\psi_{\varepsilon})) \geq \varepsilon/2$, an application of Lemma \ref{IneqSupports} yields 
\begin{equation}\label{sepparatedsupp}
\begin{aligned}
\|A_5^{(1,3)}\|_{L^2} =& \frac{1}{k+1} \|\chi_{\varepsilon,b}\, \partial_x J^m(u^{k+1} \psi_{\varepsilon})\|_{L^2}\\ \lesssim& \|\chi_{\varepsilon,b}\|_{L^{\infty}} \|u^{k+1} \psi_{\varepsilon}\|_{L^2}\\
\lesssim& \|u\|_{L^{\infty}}^k \|u\|_{L^2}.
    \end{aligned}
\end{equation}
Through the Kato-Ponce inequality, Lemma \ref{Kato-Ponce}, we obtain
\begin{equation*}
\begin{aligned}
\|A_5^{(1,4)}\|_{L^2} \lesssim& \|\partial_x (\chi_{\varepsilon,b}\, u^k)\|_{L^{\infty}} \|J^m (\chi_{\varepsilon,b}\, u)\|_{L^2}+\|J^m\big(\chi_{\varepsilon,b}\, u^k\big)\|_{L^2}\|\partial_x\big(\chi_{\varepsilon,b}\, u\big)\|_{L^{\infty}}\\
\lesssim& \big(\|u\|_{L^{\infty}}^k+\|u\|_{L^{\infty}}^{k-1}\|\partial_x u\|_{L^{\infty}}\big)\|J^m (\chi_{\varepsilon,b}\, u)\|_{L^2}+\big(\| u\|_{L^{\infty}}+\|\partial_x u\|_{L^{\infty}}\big)\|J^m\big(\chi_{\varepsilon,b}\, u^k\big)\|_{L^2}. 
\end{aligned}
\end{equation*}
Note that if $k=1$, the above estimate is complete. Now, if $k=2$, we consider \eqref{partitionkunity0}, and if $k\geq 3$, we apply \eqref{partitionkunity}, together with the fact that $\chi_{\varepsilon,b}$ has separated support with $\psi_{\varepsilon}$ to get
\begin{equation*}
J^m\big(\chi_{\varepsilon,b}\, u^k\big)= J^m\big(\, (\chi_{\epsilon,b}u)^k+\chi_{\varepsilon,b}u\, (\widetilde{\phi_{\varepsilon,b,k-1}}u)^{k-1}\big),   
\end{equation*}
where, to preserve uniform notation for arbitrary $k\geq 2$, we set $\widetilde{\phi_{\varepsilon,b,k-1}}=\phi_{\varepsilon,b}$, whenever $k=2$. Next, using Lemma \ref{Grafakos-Oh}
\begin{equation}\label{estimategeneralk}
\|J^m\big(\chi_{\varepsilon,b}\, u^k\big)\|_{L^2}\lesssim \|u\|_{L^{\infty}}^{k-1}\|J^m\big(\chi_{\varepsilon,b}\, u\big)\|_{L^2}+\|u\|_{L^{\infty}}^{k-1}\|J^m\big(\widetilde{\phi_{\varepsilon,b,k-1}}\, u\big)\|_{L^2}.    
\end{equation}
We conclude that for $k\geq 1$,
\begin{equation*}
\|A_5^{(1,4)}\|_{L^2}  \lesssim \big(\|u\|_{L^{\infty}}^k+\|u\|_{L^{\infty}}^{k-1}\|\partial_x u\|_{L^{\infty}}\big)\big(\|J^m (\chi_{\varepsilon,b}\, u)\|_{L^2}+\|J^m\big(\widetilde{\phi_{\varepsilon,b,k-1}}\, u\big)\|_{L^2}\big),
\end{equation*}
where if $k=1$, we assume $\widetilde{\phi_{\varepsilon,b,k-1}}=0$. This allows us to consolidate the estimates of $\|A_5^{(1,4)}\|_{L^2}$, for any $k\geq 1$. By a similar argument above, we use Kato-Ponce inequality and \eqref{estimategeneralk} to deduce
\begin{equation*}
\begin{aligned}
\|A_5^{(1,5)}\|_{L^2} \lesssim& \big(\|u\|_{L^{\infty}}^k+\|u\|_{L^{\infty}}^{k-1}\|\partial_x u\|_{L^{\infty}}\big)(\|J^m (\chi_{\varepsilon,b}\, u)\|_{L^2}+\|J^{m}( \phi_{\varepsilon,b}\, u)\|_{L^2}+\|J^m\big(\widetilde{\phi_{\varepsilon,b,k-1}}\, u\big)\|_{L^2}).
    \end{aligned}
\end{equation*}
Using again the fact that $\chi_{\varepsilon,b}$ and $\psi_{\varepsilon}$ have separated supports, we follow a similar argument in \eqref{sepparatedsupp} to deduce
\begin{equation*}
\begin{aligned}
\|A_5^{(1,6)}\|_{L^2} \lesssim\|u\|_{L^{\infty}}^k \|u\|_{L^2}.
\end{aligned}
\end{equation*}
In order to estimate the term $A_5^{(1,7)}$, we use the integral defining $A_5$ and Lemma \ref{LocReg} (iii) as follows
\begin{equation*}
\begin{aligned}
\left|\int_{\mathbb{R}} \chi_{\varepsilon,b}\, J^m u \, A_{5}^{(1,7)} \, dx\right|  \lesssim& \frac{1}{2} \left|\int_{\mathbb{R}} (\chi_{\varepsilon,b}^2)' \, u^k \, (J^m u)^2 \, dx\right| + \frac{1}{2} \left|\int_{\mathbb{R}} \chi_{\varepsilon,b}^2 \, \partial_x\big( u^k\big) \, (J^m u)^2 \, dx\right| \\ \lesssim& \|u\|_{L^{\infty}}^k \|\sqrt{(\chi_{\varepsilon,b}^2)'} J^mu\|_{L^2}^2 + \|u\|_{L^{\infty}}^{k-1}\|\partial_x u\|_{L^{\infty}} \|\chi_{\varepsilon,b} \, J^m u\|_{L^2}^2\\ 
\lesssim& \| u \|_{L^{\infty}}^k \|\theta_{\varepsilon,b} \, J^mu\|_{L^2}^2 +\|u\|_{L^{\infty}}^{k-1} \|\partial_x u\|_{L^{\infty}} \|\chi_{\varepsilon,b} \, J^m u\|_{L^2}^2\\ 
\lesssim& \| u \|_{L^{\infty}}^{k} \|u\|_{L^2}^2 + \| u \|_{L^{\infty}}^k \|\eta_{\varepsilon,b} \, J^{m+N-1} u\|_{L^2}^2 + \|u\|_{L^{\infty}}^{k-1} \|\partial_x u\|_{L^{\infty}} \|\chi_{\varepsilon,b} \, J^m u\|_{L^2}^2.
\end{aligned}
\end{equation*}  
Hence, collecting all the previous estimates and applying Sobolev embedding, it is seen that
\begin{equation*}
\begin{aligned}
 \Big|\int_{\mathbb{R}} &\chi_{\varepsilon,b}^2 J^m u\, J^m\big(u^k\partial_x u\big)\, dx\Big|\\
 \lesssim & \|u\|_{L^{\infty}_T H^{\frac{1}{2}^{+}}}^{k+2} + \|u\|_{L^{\infty}_T H^{\frac{1}{2}^{+}}}^{k+1}\|\chi_{\varepsilon,b}\, J^m u\|_{L^2}\\
 &+\big(\|u\|_{L^{\infty}_T H^{\frac{1}{2}^{+}}}^k+\|u\|_{L^{\infty}_T H^{\frac{1}{2}^{+}}}^{k-1}\|\partial_x u\|_{L^{\infty}}\big)\Big(\|J^m(\, \chi_{\varepsilon,b}u)\|_{L^2}+\|J^m(\phi_{\varepsilon,b}u)\|_{L^2}\\
&\hspace{3cm} +\|J^m(\, \widetilde{\phi_{\varepsilon,b,k+1}}u)\|_{L^2}+\|J^m(\, \widetilde{\phi_{\varepsilon,b,k-1}}u)\|_{L^2}\Big)\|\chi_{\varepsilon,b}\, J^m u\|_{L^2}\\
& +\|u\|_{L^{\infty}_T H^{\frac{1}{2}^{+}}}^k \|\eta_{\varepsilon,b} \, J^{m+N-1} u\|_{L^2}^2+\|u\|_{L^{\infty}_T H^{\frac{1}{2}^{+}}}^{k-1}\|\partial_x u\|_{L^{\infty}}\|\chi_{\varepsilon,b}\, J^m u\|_{L^2}^2
\end{aligned}    
\end{equation*}
By summing over $k=1,\dots,M$, and using that $\|u\|_{L^{\infty}_T H^{\frac{1}{2}^{+}}}^k\leq (1+\|u\|_{L^{\infty}_T H^{\frac{1}{2}^{+}}})^k$, we get the desired result.

\end{proof}

\subsection{Proof of Theorem \ref{ThrmPropagation}}

We are now in a position to gather all the previous results to establish the propagation of regularity principle for solutions of \eqref{Benjamineq}. 

\begin{proof}[Proof of Theorem \ref{ThrmPropagation}]

As we discussed before, we shall consider $u \in C([0,T];H^{\infty}(\mathbb{R}))$ to justify the energy estimates and the validity of Propositions \ref{PropagA1}-\ref{PropagA5}. We recall this here for the sake of clarity; keeping in mind that the desired result for $u \in C([0,T];H^s(\mathbb{R}))$ solution of \eqref{Benjamineq} with initial data $u_0 \in H^s(\mathbb{R}) \cap H^m((x_0,\infty))$, $m > s > \min\{\frac{3}{2},\frac{2N+1}{4}\}$ follows from continuous dependence and by passing to the limit in our estimates. For more clarity in this regard, see the discussion at the beginning of the proof of Theorem \ref{SmoothingEffectThrm} and the ideas in \cite{IsazaLinaresPonce2015}.

We shall deduce the propagation of regularity principle by induction over $q\in \mathbb{N}$, where the regularity index $m>s$ satisfies $s+q<m\leq s+q+1$. 

\underline{Case $m = s+\widetilde{k}$ with $\widetilde{k} \in (0,1]$}. By Theorem \ref{SmoothingEffectThrm}, we know that $$\int_{0}^{T} \int_{-R}^{R} (J^{r} u)^2 \, dx \, dt < \infty$$ for any $R>0$ and $r\in [0,s+N]$. Since $$m+N-1 = s+N + (\widetilde{k}-1) \leq s+N,$$ it follows that $$\int_{0}^{T} \int_{-R}^{R} (J^{m+N-1} u)^2 \, dx \, dt < \infty$$ for any $R>0$. Let $R'$ be a positive number such that $\supp(\eta_{\varepsilon,b}(\cdot + vt)) \subseteq (-R',R')$ for all $t \in [0,T]$. Hence, by support considerations
\begin{equation} \label{SmoothBase}
\begin{aligned}
\int_{0}^{T} \|\eta_{\varepsilon,b} \, J^{m+N-1}u(t)\|_{L_x^2}^2 \, dt \leq \int_{0}^{T} \int_{-R'}^{R'} (J^{m+N-1} u)^2(x,t) \, dx \, dt < \infty.
    \end{aligned}
\end{equation}
This means that $\|\eta_{\varepsilon,b} \, J^{m+N-1}u(t)\|_{L_x^2} < \infty$ for almost every $t \in [0,T]$.

On the other hand, notice that $$J^m(u \chi_{\varepsilon,b}) = [J^m,\chi_{\varepsilon,b}] u + \chi_{\varepsilon,b} \, J^m u.$$ Since $[J^m,\chi_{\varepsilon,b}]$ is a pseudo-differential operator of order $m-1$ with $m-1 = s+(\widetilde{k}-1) \leq s$, it follows that $[J^m,\chi_{\varepsilon,b}]J^{-s}$ has order $0$. Hence, 
\begin{equation}\label{JmChiU}
\begin{aligned}
    \|J^m(u \chi_{\varepsilon,b})\|_{L^2} &\leq \|[J^m,\chi_{\varepsilon,b}]J^{-s} J^s u\|_{L^2} + \|\chi_{\varepsilon,b} \, J^m u\|_{L^2}\\ &\lesssim \|u\|_{H^s} + \|\chi_{\varepsilon,b} \, J^m u\|_{L^2}.
    \end{aligned}
\end{equation}
We recall that $\supp(\phi_{\varepsilon,b}) \subseteq [\varepsilon/4,b]$, which implies that $\theta_{\varepsilon,b} \equiv 1$ in $\supp(\phi_{\varepsilon,b})$. Hence, there exists a positive constant $\delta'$ such that $$\dist(\supp(1-\theta_{\varepsilon,b}) , \supp(\phi_{\varepsilon,b})) > \delta'.$$ Applying Lemma \ref{LocReg} (iii) and (iv), we have
\begin{equation} \label{JmPhiU}
    \begin{aligned}
        \|J^m(u \phi_{\varepsilon,b})\|_{L^2} &\lesssim \|\theta_{\varepsilon,b} J^m u\|_{L^2} +  \|u\|_{L^2} \\ &\lesssim \|\eta_{\varepsilon,b} J^{m+N-1} u\|_{L^2} +  \|u\|_{L^2}.
    \end{aligned}
\end{equation}
Given that the supports of the functions $\widetilde{\phi_{\varepsilon,b,\ell}}$ are also contained in $[\varepsilon/4,b]$, we get the same bound in \eqref{JmPhiU} for every $\|J^m(u \widetilde{\phi_{\varepsilon,b,\ell}})\|_{L^2}$.

It is worth noting that due to \eqref{JmPhiU}, in the estimation of the nonlinear part, i.e., that from Proposition \ref{PropagA5}, the main term that needs to be controlled is the following
\begin{equation*}
\begin{aligned}
\big(1+\|u\|_{L^{\infty}_T H^{\frac{1}{2}^{+}}}\big)^{M-1}&\|\partial_x u\|_{L^{\infty}} \|\eta_{\varepsilon,b} \, J^{m+N-1} u\|_{L^2}\|\chi_{\varepsilon,b} \, J^m u\|_{L^2}\\
\lesssim & \big(1+\|u\|_{L^{\infty}_T H^{\frac{1}{2}^{+}}}\big)^{2(M-1)}\|\partial_x u\|_{L^{\infty}}^2+\|\eta_{\varepsilon,b} \, J^{m+N-1} u\|_{L^2}^2\|\chi_{\varepsilon,b} \, J^m u\|_{L^2}^2,
\end{aligned}    
\end{equation*}
where we have applied Young's inequality. Note that the above expression is useful for applying Gronwall's inequality, provided that $\|\partial_x u\|_{L^2_TL_x^{\infty}}\lesssim T^{\frac{1}{4}} \|\partial_x u\|_{L^4_TL_x^{\infty}}$, and that $\|\eta_{\varepsilon,b} \, J^{m+N-1} u\|_{L^2_TL^2_x}<\infty$.

Gathering Propositions \ref{PropagA1}-\ref{PropagA5}, the inequalities \eqref{SmoothBase}, \eqref{JmChiU}, and \eqref{JmPhiU} into the identity \eqref{EqProp2}, we arrive at a differential inequality which, after applying Gronwall's inequality, gives us
\begin{equation*}
    \sup_{t \in [0,T]} \int_{\mathbb{R}} \chi_{\varepsilon,b}^2 \,  (J^m u)^2 \, dx + \left( C_N -\omega \right) \int_0^T \int_{\mathbb{R}} (\chi_{\varepsilon,b}^2)' \, (J^{m+N} u)^2 \, dx \, dt \leq C_0,
\end{equation*}
where 
\begin{equation*}
C_0 = C_0\left(\|J^m u_0\|_{L^2((x_0,\infty))};\|\eta_{\varepsilon,b}J^{m+N-1}u\|_{L_T^2 L_x^2};\|\partial_x u\|_{L_T^2 L_x^{\infty}};\|u\|_{L_T^{\infty} H^s_x};\varepsilon;b;v;T\right)>0    
\end{equation*}
for any $\varepsilon>0$, $b\geq 5\varepsilon$, $v>0$, $0<\omega<C_N$ (this constant is given by Proposition \ref{PropagA3}). As we mentioned earlier, the above inequality shows that an approximation argument by smooth solutions of \eqref{Benjamineq} provides the desired propagation of regularity principle for the present case.
\\ \\
\underline{Case $m=s+q+\widetilde{k}$ with $q\in\mathbb{Z}^+$ and $\widetilde{k} \in (0,1]$}. Let us assume by inductive hypothesis, that 
\begin{equation*}
    \begin{aligned}
        \sup_{t\in [0,T]} \int_{\mathbb{R}}& \chi_{\varepsilon',b'}^2 \,  (J^{\overline{m}} u)^2 \, dx + \int_0^T \int_{\mathbb{R}} (\chi_{\varepsilon',b'}^2)' \, (J^{\overline{m}+N} u)^2 \, dx \, dt \leq C
    \end{aligned}
\end{equation*}
for every $\overline{m} \in (s, s+q]$ and for any $\varepsilon'>0$, $b'\geq 5\varepsilon'$, and $v'>0$. Our goal is to show that \eqref{propagation} and \eqref{smoothing} also hold for $m=s+q+\widetilde{k}$. 

Let $\varepsilon>0$, $b\geq 5\varepsilon$, and $v>0$ be arbitrary but fixed. Taking $\varepsilon' = \varepsilon/24$, $b' = b+13\varepsilon/12$, and $\overline{m} = m-1 \leq s+q$, the induction hypothesis implies $$\int_0^T \int_{\mathbb{R}} (\chi_{\varepsilon/24,b+13\varepsilon/12}^2)' \, (J^{m-1+N} u)^2 \, dx \, dt < \infty.$$ From the construction of the family $\{\chi_{\varepsilon,b}\}$, we know that there exists a positive constant $C_{\varepsilon,b}$ such that $$(\chi_{\varepsilon/24,b+13\varepsilon/12})  (\chi_{\varepsilon/24,b+13\varepsilon/12}') \geq C_{\varepsilon,b} \mathds{1}_{[\varepsilon/8,b+\varepsilon]}.$$ Thus, we find
\begin{equation} \label{smoothingInductive}
    \begin{aligned}
        \int_{0}^{T} \|\eta_{\varepsilon,b} \, J^{m+N-1}u(t)\|_{L_x^2}^2 \, dt  &\leq \int_{0}^{T} \int_{\mathbb{R}} \mathds{1}_{[\varepsilon/8,b+\varepsilon]}(x+vt) \, (J^{m+N-1} u)^2(x,t) \, dx \, dt \\ &\leq \frac{1}{2C_{\varepsilon,b}}\int_{0}^{T} \int_{\mathbb{R}} (\chi_{\varepsilon',b'}^2)'(x+vt) \, (J^{m+N-1} u)^2(x,t) \, dx \, dt\\ &< \infty.
    \end{aligned}
\end{equation}
It follows that $\|\eta_{\varepsilon,b} \, J^{m+N-1}u(t) \|_{L^2_x}<\infty$ for almost every $t\in [0,T]$. Notice that the terms $\|J^m(u \phi_{\varepsilon,b})\|_{L^2}$ and each $\|J^m(u \widetilde{\phi_{\varepsilon,b,\ell}})\|_{L^2}$ can be controlled using Lemma \ref{LocReg} as we did in \eqref{JmPhiU}.

Now, we shall estimate the $L^2$-norm of the term $J^m(u \chi_{\varepsilon,b})$. Let $m_1$ be a positive integer such that $m_1 \geq m-s-1$. By Lemma \ref{LemmaAllpseudo}, there is a pseudo-differential operator $\Psi_{m-m_1-1}\in  \mathrm{OP}\mathcal{S}^{m-m_1-1}$, and for each $j\in\{1,\dots,m_1\}$ there exist a constant $a_j$ and a pseudo-differential operator $\Psi^j  \in \mathrm{OP}\mathcal{S}^{0}$ such that 
\begin{equation*}
    \begin{aligned}
        J^m (u \, \chi_{\varepsilon,b}) &= [J^m , \chi_{\varepsilon,b}] u + \chi_{\varepsilon,b} \, J^m u\\ &= \sum_{j=1}^{m_1} a_j \, \chi_{\varepsilon,b}^{(j)} \, \Psi^j J^{m-j} u + \Psi_{m-m_1-1} u + \chi_{\varepsilon,b} \, J^m u.
    \end{aligned}
\end{equation*}
Since $m-m_1-1\leq s$, the pseudo-differential operator $\Psi_{m-m_1-1} J^{-s}$ determines a bounded operator in $L^2(\mathbb{R})$. Thus, an application of Lemma \ref{LocReg} (iii) yields
\begin{equation} \label{JmChiUPart2}
    \begin{aligned}
        \|J^m (u \, \chi_{\varepsilon,b})\|_{L^2} \lesssim& \sum_{j=1}^{m_1}  \|\chi_{\varepsilon,b}^{(j)} \, \Psi^j J^{m-j} u\|_{L^2} + \|\Psi_{m-m_1-1}J^{-s}(J^s u)\|_{L^2} + \|\chi_{\varepsilon,b} \, J^m u\|_{L^2} \\  \lesssim& \sum_{j=1}^{m_1}  \left(\|\chi_{\varepsilon,b}^{(j)} \, \Psi^j( \theta_{\varepsilon,b} \, J^{m-j} u)\|_{L^2} +\|\chi_{\varepsilon,b}^{(j)} \, \Psi^j( (1-\theta_{\varepsilon,b}) \, J^{m-j} u)\|_{L^2}  \right) \\ &+ \|J^s u\|_{L^2} + \|\chi_{\varepsilon,b} \, J^m u\|_{L^2}\\  \lesssim& \sum_{j=1}^{m_1}  \left(\|\theta_{\varepsilon,b} \, J^{m-j} u\|_{L^2} +\|\chi_{\varepsilon,b}^{(j)} \, \Psi^j( (1-\theta_{\varepsilon,b}) \, J^{m-j} u)\|_{L^2}  \right) \\ &+ \|J^s u\|_{L^2} + \|\chi_{\varepsilon,b} \, J^m u\|_{L^2}\\
        \lesssim &\sum_{j=1}^{m_1}  \left(\|\eta_{\varepsilon,b} \, J^{m+N-1} u\|_{L^2} + \|u\|_{L^2} \right) + \|J^s u\|_{L^2} + \|\chi_{\varepsilon,b} \, J^m u\|_{L^2},
    \end{aligned}
\end{equation}
where given that the distance between $\supp(\chi_{\varepsilon,b}^{(j)})$ and $\supp((1-\theta_{\varepsilon,b}))$ is greater than a positive constant, we have used the separated support property to deduce $\|\chi_{\varepsilon,b}^{(j)} \, \Psi^j( (1-\theta_{\varepsilon,b}) \, J^{m-j} u)\|_{L^2}\lesssim \|u\|_{L^2}$.

Gathering the previous results, by \eqref{smoothingInductive}-\eqref{JmChiUPart2}, we apply Proposition \ref{PropagA1}-\ref{PropagA5} and Gronwall's inequality to deduce
\begin{equation*}
    \sup_{t \in [0,T]} \int_{\mathbb{R}} \chi_{\varepsilon,b}^2 \,  (J^m u)^2 \, dx + \left( C_N -\omega \right) \int_0^T \int_{\mathbb{R}} (\chi_{\varepsilon,b}^2)' \, (J^{m+N} u)^2 \, dx \, dt \leq C_0,
\end{equation*}
where 
\begin{equation*}
C_0 = C_0\left(\|J^m u_0\|_{L^2((x_0,\infty))};\|\eta_{\varepsilon,b}J^{m+N-1}u\|_{L_T^2 L_x^2};\|\partial_x u\|_{L_T^2 L_x^{\infty}};\|u\|_{L_T^{\infty} H^s_x};\varepsilon;b;v;T\right)>0.    
\end{equation*}
Since $m=s+q+\widetilde{k}$ with $\widetilde{k}\in (0,1]$, the above estimate completes the deduction of the inductive step and, in turn, finishes the proof of the theorem. 
\end{proof}


\section{Appendix: proof of Lemma \ref{LWPN12}}\label{LWPresults}

In this part, we prove that the Cauchy problem \eqref{Benjamineq} is LWP in $H^s$-spaces. We focus on the proof of Lemma \ref{LWPN12} as the proof of Lemma \ref{StandardLWP} follows by standard techniques. We adapt the strategy in \cite{KenigPonceVega1991,Laurey1997} to the Cauchy problem \eqref{Benjamineq}.

We consider the linear Cauchy problem
\begin{equation}\label{linearBE}
\left\{\begin{aligned}
&\partial_t u+\gamma\mathcal{H}\partial_x^2 u+(-1)^{N+1} \partial_x^{2N+1}u+ P(D) u=0, \quad x\in \mathbb{R},\, t\in \mathbb{R},\\
&u(x,0)=u_0(x).
\end{aligned}\right. 
\end{equation}
The equation in \eqref{linearBE} generates the unitary group $\{S(t)\}_{t\in \mathbb{R}}$ in $H^s$-spaces, which is defined via the Fourier transform by 
\begin{equation}\label{LinearGdef}
    S(t)f=(e^{it\omega(\xi)}\widehat{f}(\xi))^{\vee},
\end{equation}
where
\begin{equation}\label{dispersiondef}
    \omega(\xi):=-\gamma \xi |\xi|+\xi^{2N+1}-\sum_{k=1}^{N-1} (-1)^k  a_k \xi^{2k+1},
\end{equation}
we follow the zero convention for the empty sum $\sum_{k=1}^{N-1}(\cdots)=0$, when $N=1$. Thus, the integral formulation of \eqref{Benjamineq} is given by
\begin{equation}\label{inteeq}
u(t)=S(t)u_0-\int_0^t S(t-\tau)\left(\sum_{k=1}^M b_k u^k \partial_x u\right)(\tau)\, d\tau.
\end{equation}

\subsection{Linear estimates} 

We recall the following Strichartz estimates for the group $\{S(t)\}_{t\in\mathbb{R}}$. Their proof can be consulted in \cite{Laurey1997}, see also \cite{KenigPonceVega1991,KenigPonceVega1991I}.

\begin{lemma}\label{StrEstimates}
Let $N\in \mathbb{Z}^{+}$, $\gamma\in \mathbb{R}$. If $N\geq 2$, consider $a_1,\dots,a_{N-1}\in \mathbb{R}$. Let $S(t)$ be defined by \eqref{LinearGdef} with $\omega(\xi)$ as in \eqref{dispersiondef}. Then, for any $T>0$ 
    \begin{equation*}
        \||D|^N S(t)u_0\|_{ L^{\infty}_xL^2_T}\lesssim \langle T\rangle^{1/2}\|u_0\|_{L^2} \qquad \text{(Kato's smoothing effect)}.
    \end{equation*}
For any $\theta \in [0,1]$, $(q,p)=\big(\frac{4}{\theta},\frac{2}{(1-\theta)}\big)$, $\frac{1}{p}+\frac{1}{p'}=\frac{1}{q}+\frac{1}{q'}=1$, and $T>0$,
\begin{equation*}
\||D|^{\frac{\theta(2N-1)}{4}} S(t)u_0\|_{L^q_T L^p_x}\lesssim \langle T\rangle^{\theta/4}\|u_0\|_{L^2} \quad \text{(Strichartz's estimates)}  
\end{equation*}
and its dual version
\begin{equation*}
\Big\|\int_0^t |D|^{\frac{\theta(2N-1)}{2}} S(t-\tau)f(\tau)\, d\tau\Big\|_{L^q_T L^p_x}\leq \langle T\rangle^{\theta/2}\|f\|_{L^{q'}_TL^{p'}_x} \quad \text{(Strichartz's estimates)}.   
\end{equation*}
Let $s>\frac{2N+1}{4}$. For any $T>0$,
\begin{equation*}
\|S(t)u_0\|_{L^2_xL^{\infty}_T}  \lesssim \langle T \rangle^{1/2}\|u_0\|_{H^s} \quad \text{(Maximal function)}.
\end{equation*}
    
\end{lemma}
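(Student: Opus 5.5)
The plan is to reduce all four estimates to oscillatory-integral bounds for the phase $\omega$ in \eqref{dispersiondef}, treating high and low frequencies separately.

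\textbf{Frequency splitting.} For $|\xi|\geq R_0$, with $R_0=R_0(\gamma,a_1,\dots,a_{N-1})$ large, the leading term dominates. We get $\omega'(\xi)=(2N+1)\xi^{2N}+O(|\xi|^{2N-2}+|\xi|)$ and $\omega''(\xi)=2N(2N+1)\xi^{2N-1}+O(|\xi|^{2N-3}+1)$. Hence on each half-line $\{\xi\geq R_0\}$ and $\{\xi\leq -R_0\}$ the map $\xi\mapsto\omega(\xi)$ is strictly monotone, with $|\omega'|\sim|\xi|^{2N}$ and $|\omega''|\sim|\xi|^{2N-1}$. The only non-smooth point of $\omega$ is $\xi=0$, coming from $\xi|\xi|$, so it is irrelevant there. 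Write $u_0=P_{\mathrm{low}}u_0+P_{\mathrm{high}}u_0$ with a smooth cutoff at $|\xi|\sim R_0$.

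For the low-frequency part every estimate is trivial. Indeed $\||D|^{\sigma}S(t)P_{\mathrm{low}}u_0\|_{L^\infty_{x,t}}\lesssim_{R_0}\|u_0\|_{L^2}$ by Cauchy--Schwarz on the compact frequency support, and the $L^2_x$ norms are conserved. Integrating in time over $[0,T]$ produces exactly the factors $\langle T\rangle^{1/2}$, $\langle T\rangle^{\theta/4}$, and so on; for the maximal function one uses Sobolev in $t$ or the bound $|\partial_t S(t)P_{\mathrm{low}}u_0|\lesssim\|u_0\|_{L^2}$ together with a fixed $L^2_x$ bound. So it suffices to treat $P_{\mathrm{high}}u_0$, and there the bounds should even be global in $T$.

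\textbf{Kato smoothing and Strichartz.} For Kato smoothing I would fix $x$ and change variables $\eta=\omega(\xi)$ on each monotone branch. Then $\int_{\mathbb{R}}\big|\int e^{ix\xi+it\omega(\xi)}|\xi|^N\widehat{u_0}(\xi)\,d\xi\big|^2dt$ is, by Plancherel in $t$, bounded by $\int|\xi|^{2N}|\widehat{u_0}(\xi)|^2/|\omega'(\xi)|\,d\xi\lesssim\|u_0\|_{L^2}^2$, uniformly in $x$.

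For Strichartz I would use the van der Corput type lemma of Kenig--Ponce--Vega \cite{KenigPonceVega1991I}. For a phase whose second derivative is polynomial-like on a half-line,
\begin{equation*}
\Big|\int e^{i(t\omega(\xi)+x\xi)}|\omega''(\xi)|^{1/2}\chi_{\mathrm{high}}(\xi)\,d\xi\Big|\lesssim |t|^{-1/2},
\end{equation*}
uniformly in $x$. Since $|\omega''|^{1/2}\sim|\xi|^{(2N-1)/2}$, this gives the dispersive estimate $\||D|^{(2N-1)/2}S(t)P_{\mathrm{high}}f\|_{L^\infty}\lesssim|t|^{-1/2}\|f\|_{L^1}$. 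The $TT^*$ argument together with Hardy--Littlewood--Sobolev in $t$ (the kernel $|t-\tau|^{-1/2}$ maps $L^{4/3}_t$ to $L^4_t$) yields the $\theta=1$ endpoint $(q,p)=(4,\infty)$ with $|D|^{(2N-1)/4}$. The inhomogeneous version with $|D|^{(2N-1)/2}$ follows the same way. Interpolation with the trivial case $\theta=0$ ($L^\infty_TL^2_x$, unitarity) gives the full range $(q,p)=(4/\theta,2/(1-\theta))$ with $|D|^{\theta(2N-1)/4}$, and similarly the dual inequality.

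\textbf{Maximal function (main obstacle).} This is the step I expect to be hardest. Following \cite{KenigPonceVega1991,Laurey1997}, I would decompose $P_{\mathrm{high}}u_0$ dyadically, $|\xi|\sim2^k$, and prove
\begin{equation*}
\Big\|\sup_{|t|\leq T}|S(t)P_ku_0|\Big\|_{L^2_x}\lesssim 2^{k(2N+1)/4}\|P_ku_0\|_{L^2}.
\end{equation*}
By $TT^*$ this reduces to a kernel bound for $K_k(x,t)=\int e^{i(t\omega(\xi)+x\xi)}\varphi(2^{-k}\xi)\,d\xi$: namely $\sup_t|K_k(x,t)|$ should be dominated by an integrable-in-$x$ function $H_k$ with $\|H_k\|_{L^1}\lesssim 2^{k(2N+1)/2}$.

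The kernel bound splits into three regions. For $|x|\lesssim 2^{-k}$ use the trivial bound $2^k$. Where $|x|\gg|t|\,|\omega'|$ on the support, integrate by parts, giving $|x|^{-2}2^{-k}$. In the stationary region $|x|\sim|t|2^{2Nk}$, van der Corput with $|\omega''|\sim 2^{k(2N-1)}$ gives $|t|^{-1/2}2^{-k(2N-1)/2}\sim|x|^{-1/2}2^{k/2}$. The resulting majorant is $\min\{2^k,\ |x|^{-1/2}2^{k/2}\}$ on $|x|\lesssim T2^{2Nk}$, plus an integrable tail. Its integral is $\lesssim(T2^{2Nk})^{1/2}2^{k/2}=T^{1/2}2^{k(2N+1)/2}$. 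Hence each dyadic piece carries the loss $2^{k(2N+1)/4}\langle T\rangle^{1/2}$. Since $s>\frac{2N+1}{4}$, the sum over $k$ converges by Cauchy--Schwarz, which gives the stated bound with $\|u_0\|_{H^s}$. The delicate point is that the constants in the stationary phase lemma must be uniform in $k$ and unaffected by the lower-order terms and the $\gamma\xi|\xi|$ term; this is exactly why $R_0$ is chosen large, so that $\omega''$ is comparable to its leading monomial on the high-frequency region.
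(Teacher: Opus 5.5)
Your proposal is correct. It is essentially the argument the paper defers to: the paper gives no proof and only cites Laurey and Kenig--Ponce--Vega. Those references use the same ingredients you do: a low/high frequency splitting; change of variables plus Plancherel in $t$ for the smoothing effect; the Kenig--Ponce--Vega oscillatory-integral lemma with $TT^*$ and Hardy--Littlewood--Sobolev for Strichartz; and a $TT^*$ kernel majorant for the maximal function.

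Two steps need tightening, though neither is a gap:
\begin{itemize}
\item The pointwise comparability $|\omega''|^{1/2}\sim|\xi|^{(2N-1)/2}$ does not by itself transfer the $L^1\to L^\infty$ kernel bound. You need that the ratio of the two multipliers has bounded variation on the high-frequency region, together with the uniformity of the oscillatory bound over subintervals.
\item Interpolating in $\theta$ is not plain Riesz--Thorin, because the derivative order $\theta(2N-1)/4$ varies with $\theta$. You need Stein's analytic interpolation, and hence the endpoint dispersive bound for $|D|^{(2N-1)/2+i\tau}$ with at most polynomial growth in $\tau$. Alternatively, interpolate dyadic pieces with Riesz--Thorin and sum them with Littlewood--Paley.
\end{itemize}
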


\begin{remark}\label{remkKato}
We can obtain the following version of Kato's smoothing effect in Lemma \ref{StrEstimates}
\begin{equation*}
\|J^N S(t)u_0\|_{ L^{\infty}_xL^2_T}\lesssim \langle T\rangle^{1/2}\|u_0\|_{L^2}.
\end{equation*}
To observe this, let $\psi \in C^{\infty}_0(\mathbb{R})$ be such that $0\leq \psi\leq 1$, and $\psi(\xi)=1$ for all $|\xi|\leq 1$. We denote by $P_{\leq 1}$ the Fourier multiplier by the function $\psi$. Let $P_{>1}=I-P_{\leq 1}$. Thus, using Sobolev embedding, the properties of the operators $P_{\leq 1}$ and $P_{>1}$, and Lemma \ref{StrEstimates}, we arrive at
\begin{equation*}
 \begin{aligned}
\|J^N S(t)u_0\|_{ L^{\infty}_xL^2_T}\lesssim & \|J^N P_{\leq 1} S(t)u_0\|_{ L^{\infty}_xL^2_T}+\|J^N P_{> 1} S(t)u_0\|_{ L^{\infty}_xL^2_T}\\
\lesssim & \langle T\rangle^{1/2}\|J^{N+\frac{1}{2}^{+}}P_{\leq 1} S(t)u_0\|_{ L^{\infty}_T L^2_x}+\| |D|^{N} S(t)(|D|^{-N} J^N P_{> 1} u_0)\|_{ L^{\infty}_xL^2_T}\\
\lesssim & \langle T\rangle^{1/2}\big(\|u_0\|_{L^2}+\||D|^{-N} J^N P_{> 1} u_0)\|_{L^2}\big).
 \end{aligned}   
\end{equation*}   
Using that $|D|^{-N} J^N P_{> 1}$ is a bounded operator in $L^2(\mathbb{R})$, we get the desired estimate.

Under the assumptions over $(p,q)$ in Lemma \ref{StrEstimates}, we can apply a similar argument as above, together with the embeddings $H^{\frac{\theta}{2}}(\mathbb{R})\hookrightarrow L^{p}(\mathbb{R})$ for $\theta\in(0,1)$ and $H^{\frac{1}{2}^{+}}(\mathbb{R})\hookrightarrow L^{\infty}(\mathbb{R})$  to deduce
\begin{equation*}
\|J^{\frac{\theta(2N-1)}{4}} S(t)u_0\|_{L^q_T L^p_x}\lesssim \langle T\rangle^{\theta/4}\|u_0\|_{L^2}.  
\end{equation*}

\end{remark}

We are now in a position to deduce our LWP.

\begin{proof}[Proof of Lemma \ref{LWPN12}]

Given $s>\frac{2N+1}{4}$, let $u_0\in H^s(\mathbb{R})$ arbitrary but fixed.  We define the space
\begin{equation}\label{Xspacedef}
X(T,a)=\{u\in C([0,T];H^s(\mathbb{R})): \, \vertiii{u}\leq a\},    
\end{equation}
where
\begin{equation}\label{tripnormdef}
\begin{aligned}
\vertiii{u}:=\|J^s u\|_{L^{\infty}_TL^2_x}+\||D|^s \partial_x u\|_{L^{\infty}_xL^2_T} +\|J^{s-\frac{(5-2N)}{4}}\partial_x u\|_{L^4_T L^{\infty}_x}+\|u\|_{L^2_x L^{\infty}_T}.
\end{aligned}   
\end{equation}
Notice that for any $a,T>0$, $X(T,a)$ defines a complete metric space. Motivated by the integral formulation of \eqref{Benjamineq}, we consider the function
\begin{equation*}
\Phi(u)=S(t)u_0-\int_0^t S(t-\tau)\left(\sum_{k=1}^M b_k u^k \partial_x u\right)(\tau)\, d\tau, \quad u\in X(T,a).
\end{equation*}
In what follows, we shall prove that there exist a real number $a>0$ and a time $T>0$ such that $\Phi: X(T,a) \rightarrow X(T,a)$ is well-defined. 
\\ \\
\underline{\bf $\|\cdot\|_{L^{\infty}_TL^2_x}$-norm estimates}. Since each operator in the group $\{S(t)\}_{t\in \mathbb{R}}$ defines an isometry in $L^2(\mathbb{R})$, we get
\begin{equation*}
\begin{aligned}
 \|J^s\Phi(u)\|_{L^{\infty}_T L^2_x}
 \leq & \|S(t) J^s u_0\|_{L_T^{\infty} L_x^2}+c\sum_{k=1}^M\Big\|\int_0^t S(t-\tau)J^s(u^k\partial_x u)(\tau)\, d\tau\Big\|_{L^{\infty}_T L_x^2}\\
  \leq & \|J^s u_0\|_{L^2}+c\sum_{k=1}^M \int_0^T \|J^s(u^k\partial_x u)(\tau)\|_{L_x^2}\, d\tau\\
 = & \|J^s u_0\|_{L^2} +c\sum_{k=1}^M\|J^s(u^k \partial_x u)\|_{L^1_T L^2_x}, 
\end{aligned}
\end{equation*}
where $c$ depends on the $b_k$'s. For every $k\in \{1,\dots,M\}$, we have
\begin{equation*}
\begin{aligned}
\|J^s(u^k \partial_x u)\|_{L^1_T L^2_x}\leq & \|[J^s,u^k]\partial_x u\|_{L^1_T L^2_x}+\|u^k (J^s-|D|^s)\partial_x u\|_{L^1_T L^2_x}\\
&+\|u^k |D|^s\partial_x  u\|_{L^1_T L^2_x}.       
\end{aligned}    
\end{equation*}
We estimate each term on the right-hand side of the above inequality. But first, we recall that the Bessel potential $J^{-s'}$ can be written as $J^{-s'} h = G_{-s'} \ast h$ for any $s'>0$, $p\in [1,\infty]$, and $h\in L^p(\mathbb{R}^d)$, where the kernel $G_{-s'}$ satisfies $\|G_{-s'}\|_{L^1} = 1$ (see \cite[Chapter V]{Stein1970Singular}). By Young's inequality,
\begin{equation} \label{BessIneq}
\|J^{-s'} h\|_{L^p}=\|G_{-s'}\ast h\|_{L^p} \leq \|G_{-s'}\|_{L^1} \|h\|_{L^p} = \|h\|_{L^p}
\end{equation}
for any $p\in [1,\infty]$ and $s'\geq 0$. By \eqref{BessIneq}, we know that $\|J^{-s+\frac{5-2N}{4}}f\|_{L^p}\leq \|f\|_{L^p}$ for all $1\leq p \leq \infty$, which implies
\begin{equation}\label{dxcontrol}
\|\partial_x u\|_{L^4_T L^{\infty}_x}=\|J^{-s+\frac{5-2N}{4}}J^{s-\frac{(5-2N)}{4}}\partial_x u\|_{L^4_T L^{\infty}_x}\leq \|J^{s-\frac{(5-2N)}{4}}\partial_x u\|_{L^4_T L^{\infty}_x}.    
\end{equation}
Using Lemma \ref{Kato-Ponce}, the fact that $H^s(\mathbb{R})$ is a Banach algebra provided that $s>\frac{1}{2}$, and H\"older's inequality, it is seen that
\begin{equation*}
\begin{aligned}
\|[J^s,u^k]\partial_x u\|_{L^1_T L^2_x}\lesssim & \|\|\partial_x(u^k)\|_{L^{\infty}_x}\|J^s u\|_{L^{2}_x}\|_{L^1_T}+\|\|J^s(u^k)\|_{L^2_x}\|\partial_x u\|_{L^{\infty}}\|_{L^1_T} \\
\lesssim &  \|J^s u\|_{L^{\infty}_TL^2_x}^k\|\partial_x u\|_{L^1_T L^{\infty}_x}\\
\lesssim & T^{3/4} \|J^s u\|_{L^{\infty}_T L^2_x}^k\|J^{s-\frac{(5-2N)}{4}}\partial_x u\|_{L^4_T L^{\infty}_x}\\
\lesssim & T^{3/4}\vertiii{u}^{k+1},
\end{aligned}    
\end{equation*}
where we used Sobolev embedding $H^s(\mathbb{R})\hookrightarrow L^{\infty}(\mathbb{R})$ (i.e., $\|f\|_{L^{\infty}}\lesssim \|f\|_{H^s}$). We remark that the argument employed in \eqref{dxcontrol} also shows that 
\begin{equation*}
\|\partial_ x J^r u\|_{L^4_T L^{\infty}_x} \leq \|J^{s-\frac{(5-2N)}{4}}\partial_x u\|_{L^4_T L^{\infty}_x},
\end{equation*} 
for all $0\leq r \leq s-\frac{(5-2N)}{4}$. This justifies why the solution derived from our arguments satisfies \eqref{solclass2}.  

Next, by Proposition \ref{ExpDs-Js} and Sobolev embedding, it follows 
\begin{equation*}
\begin{aligned}
 \|u^k (J^s-|D|^s)\partial_x u\|_{L^1_T L^2_x}\lesssim& \|\|u\|_{L^{\infty}_x}^k\|J^s u\|_{L^2_x}\|_{L^1_T}\lesssim & T\|J^su\|_{L^{\infty}_T L^2_x}^{k+1}\\
\lesssim & T \vertiii{u}^{k+1}.
\end{aligned}    
\end{equation*}
By H\"older's inequality, using that $\|\cdot\|_{L^p_T L^p_x}=\|\cdot\|_{L^p_x L^p_T}$ for any $p\in [1,\infty]$ and Sobolev embedding, we deduce
\begin{equation*}
\begin{aligned}
\|u^k |D|^s\partial_x  u\|_{L^1_T L^2_x}\lesssim &  T^{1/2}\|u^k |D|^s\partial_x  u\|_{L^2_x L^2_T}\\
\lesssim & T^{1/2}\|u^k\|_{L^2_x L^{\infty}_T} \||D|^s\partial_x  u\|_{L^{\infty}_x L^2_T}\\
\lesssim & T^{1/2}\|J^su\|_{L^{\infty}_TL^2_x}^{k-1}\|u\|_{L^2_x L^{\infty}_T} \||D|^s\partial_x  u\|_{L^{\infty}_x L^2_T}\\
\lesssim & T^{1/2}\vertiii{u}^{k+1}.
\end{aligned}    
\end{equation*}
Summarizing the previous results, there exists a constant $c>0$ such that
\begin{equation*}
\begin{aligned}
 \|J^s\Phi(u)\|_{L^{\infty}_T L^2_x} \lesssim & \|u_0\|_{H^s} +c(T^{1/2}+T^{3/4}+T)\sum_{k=1}^M  \vertiii{u}^{k+1}.
\end{aligned}
\end{equation*}
\\ \\
\underline{\bf $\||D|^s \partial_x\big(\cdot\big)\|_{L^{\infty}_x L^2_T}$-norm estimates}. We apply Kato's smoothing effect (Lemma \ref{StrEstimates} and Remark \ref{remkKato}), along with earlier arguments used to estimate $\|J^s(u^k\partial_x u)\|_{L^1_T L^2_x}$ to deduce
\begin{equation*}
\begin{aligned}
\||D|^s\partial_x \Phi(u)\|_{L^{\infty}_xL^2_T}\lesssim & \|J^N|D|^{s}J^{-N}\partial_x S(t) u_0\|_{L^{\infty}_xL^2_T}\\
&+ \sum_{k=1}^M \Big\|J^N|D|^{s}J^{-N}\partial_x\int_0^t S(t-\tau) (u^k \partial_x u)(\tau)\, d\tau\Big\|_{L^{\infty}_xL^2_T}\\
\lesssim & \langle T\rangle^{1/2}\||D|^{s}J^{-N}\partial_x u_0\|_{L^2}\\
&+\langle T\rangle^{1/2}\sum_{k=1}^M\int_0^{T} \||D|^{s}J^{-N}\partial_x\big(u^k \partial_x u\big)(\tau)\|_{L^2_x}\, d\tau\\
\lesssim & \langle T\rangle^{1/2}\|J^su_0\|_{L^2}+\langle T\rangle^{1/2}\sum_{k=1}^M\|J^s\big(u^k \partial_x u\big)\|_{L^{1}_T L^2_x}\\
\lesssim & \langle T\rangle^{1/2}\|J^su_0\|_{L^2}+\langle T\rangle^{1/2} (T^{1/2}+T^{3/4}+T)\sum_{k=1}^M  \vertiii{u}^{k+1}.
\end{aligned}    
\end{equation*}
\\ \\
\underline{\bf $\|J^{s-\frac{(5-2N)}{4}} \partial_x\big(\cdot\big)\|_{L^{4}_T L^{\infty}_x}$-norm estimates}. We apply the Strichartz estimate in Lemma \ref{StrEstimates} and its version with $J$ in Remark \ref{remkKato} with $\theta=1$ to get 
\begin{equation*}
\begin{aligned}
\|J^{s-\frac{(5-2N)}{4}}&\partial_x\Phi(u)\|_{L^4_T L^{\infty}_x}\\
\lesssim &  \|J^{\frac{2N-1}{4}}S(t)J^{-\frac{(2N-1)}{4}}\partial_x J^{s-\frac{(5-2N)}{4}}u_0\|_{L^4_T L^{\infty}_x} \\
&+\sum_{k=1}^M\Big\|J^{\frac{2N-1}{4}}\int_0^t S(t-\tau) J^{-\frac{(2N-1)}{4}}\partial_x J^{s-\frac{(5-2N)}{4}}(u^k\partial_x u)(\tau)\, d\tau\Big\|_{L^4_T L^{\infty}_x}\\
\lesssim & \langle T\rangle^{1/4}\|J^su_0\|_{L^2}+\sum_{k=1}^{M}\langle T\rangle^{1/4}\|J^s\big(u^k \partial_x u\big)\|_{L^1_T L^2_x}\\
\lesssim & \langle T\rangle^{1/4}\|J^su_0\|_{L^2}+\langle T\rangle^{1/4} (T^{1/2}+T^{3/4}+T)\sum_{k=1}^M  \vertiii{u}^{k+1}.
\end{aligned}    
\end{equation*}
\\ \\
\underline{\bf $\|\cdot\|_{L^{2}_x L^{\infty}_T}$-norm estimates}. Using the maximal function estimate in Lemma \ref{StrEstimates} and the fact that $s>\frac{2N+1}{4}$, we find
\begin{equation*}
\begin{aligned}
\|\Phi(u)\|_{L^2_x L^{\infty}_T} \leq &\|S(t)u_0\|_{L^2_xL^{\infty}_T} +\sum_{k=1}^M\Big\|\int_0^t S(t-\tau)(u^k\partial_x u)(\tau)\, d\tau\Big\|_{L^2_xL^{\infty}_T}\\
\lesssim & \langle T\rangle^{1/2}\|J^s u_0\|_{L^2}+\langle T\rangle^{1/2}\sum_{k=1}^M\|J^s(u^k\partial_x u)\|_{L^1_T L^2_x}\\
\lesssim & \langle T\rangle^{1/2}\|J^su_0\|_{L^2}+\langle T\rangle^{1/2} (T^{1/2}+T^{3/4}+T)\sum_{k=1}^M  \vertiii{u}^{k+1}.
\end{aligned}    
\end{equation*}
Gathering the previous results, there exists $c^{\ast}>0$ such that
\begin{equation}\label{contrc1}
\begin{aligned}
\vertiii{\Phi(u)}\leq c^{\ast}\langle T\rangle^{1/2}
\|J^s u_0\|_{L^2}+c^{\ast}T^{1/2}\langle T\rangle^{1/2} \sum_{k=1}^M  \vertiii{u}^{k+1}. 
\end{aligned}    
\end{equation}
Setting $a=2c^{\ast}\|J^s u_0\|_{L^2}$, we consider a time $T>0$ small enough such that
\begin{equation*}
\begin{aligned}
&\langle T\rangle^{1/2}\frac{a}{2}+c^{\ast}T^{1/2}\langle T \rangle \sum_{k=1}^M a^{k+1}\leq a.
\end{aligned}
\end{equation*}
Thus, from this choice of $a$ and $T$, \eqref{contrc1}, and a simple continuity argument, show that $\Phi:X(T,a)\rightarrow X(T,a)$ is well-defined. Moreover, similar arguments show that $\Phi$ is a contraction. The rest of the proof employs standard arguments; therefore, we will omit further details.
\end{proof}


\section*{Acknowledgments}

The authors express their gratitude to the seminar \emph{“Semillero de An\'alisis Arm\'onico y EDP”} at the Universidad Nacional de Colombia, Bogot\'a D.C., where several parts of this work were presented and discussed. C. G. acknowledges the financial support provided by the \emph{“Grado de Honor”} scholarship from the same university. We are grateful to Professor Gustavo Ponce for bringing to our attention the validity of Corollary \ref{corollarypropdecay}.


\bigskip

{\bf Data Availability.} 
Data sharing not applicable to this article as no datasets were generated or analyzed during the current study.

\bigskip

{\bf Competing Interests.} 
The authors have no competing interests to declare that are relevant to the content of this article.


\bibliographystyle{acm}

\bibliography{references1}

\end{document}